\DeclareMathAlphabet{\mathpzc}{OT1}{pzc}{m}{it}
\newcommand{\EO}[1]{#1}
\newtheorem{remark}[theorem]{Remark}
\numberwithin{equation}{section}
\newcommand{\abs}[1]{\lvert#1\rvert}
\DeclareMathOperator{\proj}{proj}
\DeclareMathOperator*{\esssup}{esssup}
\title{Error estimates for the optimal control of a parabolic fractional PDE}
\author{Christian Glusa\thanks{Center for Computing Research, Sandia National Laboratories, Albuquerque, NM 87123, USA ({\tt caglusa@sandia.gov}).}
\and
Enrique Ot\'arola\thanks{Departamento de Matem\'atica, Universidad T\'ecnica Federico Santa Mar\'ia, Valpara\'iso, Chile (\texttt{enrique.otarola@usm.cl}}).}
\date{Draft version of \today.}
\begin{document}

\maketitle
\begin{abstract}
We consider the integral definition of the fractional Laplacian and analyze a linear-quadratic optimal control problem for the so-called fractional heat equation; control constraints are also considered. We derive existence and uniqueness results, first order optimality conditions, and regularity estimates for the optimal variables. To discretize the state equation equation we propose a fully discrete scheme that relies on an implicit finite difference discretization in time combined with a piecewise linear finite element discretization in space. We derive stability results and a novel $L^2(0,T;L^2(\Omega))$ a priori error estimate. On the basis of the aforementioned solution technique, we propose a fully discrete scheme for our optimal control problem that discretizes the control variable with piecewise constant functions and derive a priori error estimates for it. We illustrate the theory with one- and two-dimensional numerical experiments.
\end{abstract}

\begin{keywords}
linear-quadratic optimal control problem, fractional diffusion, integral fractional Laplacian, regularity estimates, fully discrete methods, finite elements, stability, error estimates.
\end{keywords}
\begin{AMS}
49J20,    
49M25,    
65M12,    
65M15,    
65M60.    
\end{AMS}
%
\section{Introduction}
\label{sec:introduccion}
In this work we shall be interested in the design and analysis of solution techniques for a linear-quadratic optimal control problem involving an initial boundary value problem for a fractional parabolic equation. To make matters precise, for $n \geq 1$, we let $\Omega \subset \mathbb{R}^n$ be an open and bounded domain with Lipschitz boundary $\partial\Omega$; \EO{when deriving regularity and error estimates we will assume that $\Omega$ is smooth.} Given a desired state $\usf_d: \Omega \times (0,T) \rightarrow \mathbb{R}$ and a regularization parameter $\mu>0$, we define the cost functional
\begin{equation}	
\label{def:J}
 J(\usf,\zsf)= \frac{1}{2}\int_{0}^T \left( \| \usf - \usf_{d} \|^2_{L^2(\Omega)} + \mu \| \zsf\|^2_{L^2(\Omega)} \right) \diff t. 
\end{equation}
Let $\fsf:\Omega \times (0,T) \rightarrow \mathbb{R}$ and $\usf_0:\Omega \to \mathbb{R}$ be fixed functions. We will call them the right-hand side and initial datum, respectively. Let $s \in (0,1)$. We shall be concerned with the following PDE-constrained optimization problem: Find 
\begin{equation}
\label{eq:min}
  \min J(\usf,\zsf)
\end{equation}
subject to the \emph{fractional heat equation}
\begin{equation}
\label{eq:fractional_heat}
\partial_t \usf + (-\Delta)^s \usf = \fsf + \zsf  \text{ in } \Omega \times (0,T), 
\quad \usf = 0 \text{ in } \Omega^c \times (0,T),
\quad \usf(0) = \usf_0 \text{ in } \Omega,
\end{equation}
and the \emph{control constraints}
\begin{equation}
\label{eq:cc}
\asf(x,t) \leq \zsf(x,t) \leq \bsf(x,t) \quad\textrm{a.e.~~} (x,t) \in Q:= \Omega \times (0,T).
\end{equation}
The functions $\asf$ and $\bsf$ both belong to $L^2(Q)$ and satisfy the property $\asf(x,t) \leq \bsf(x,t)$ for almost every $(x,t) \in Q$. In \eqref{eq:fractional_heat}, $\Omega^c := \mathbb{R}^n \setminus \Omega$. For convenience, we will refer to the optimal control problem \eqref{eq:min}--\eqref{eq:cc} as the \emph{parabolic fractional optimal control problem}; see section~\ref{sec:control} for its precise description and analysis.

\EO{For smooth functions $w: \mathbb{R}^n \rightarrow \mathbb{R}$, there are several equivalent definitions of the fractional Laplace operator $(-\Delta)^s$ in $\mathbb{R}^n$ \cite{MR3613319}. Indeed, $(-\Delta)^s$ can be naturally defined via Fourier transform:
\begin{equation}
\label{eq:Fourier}
\mathcal{F}( (-\Delta)^s w) (\xi) = | \xi |^{2s} \mathcal{F}(w) (\xi).
\end{equation}
Equivalently, $(-\Delta)^s$ can be defined by means of the following pointwise formula:
\begin{equation}
 (-\Delta)^s w(x) = C(n,s) \, \mathrm{ p.v } \int_{\mathbb{R}^n} \frac{w(x) - w(y)}{|x-y|^{n+2s}} \mathrm{d}y,
 \qquad
 C(n,s) = \frac{2^{2s} s \Gamma(s+\frac{n}{2})}{\pi^{n/2}\Gamma(1-s)},
 \label{eq:integral_definition}
\end{equation}
where $\textrm{p.v}$ stands for the Cauchy principal value and $C(n,s)$ is a positive normalization constant that depends only on $n$ and $s$ \cite[equation (3.2)]{NPV:12}; $C(n,s)$ is introduced to guarantee that the symbol of the resulting operator is $| \xi |^{2s}$. A proof of the equivalence of these two definitions can be found in \cite[section 1.1]{Landkof} and \cite[Proposition 3.3]{NPV:12}. In addition to \eqref{eq:Fourier} and \eqref{eq:integral_definition}, several other equivalent definitions of $(-\Delta)^s$ in $\mathbb{R}^n$ are available in the literature \cite{MR3613319}. For instance, the ones based on the Balakrishnan formula and a suitable harmonic extension \cite{CS:07}. In bounded domains there are also several definitions of $(-\Delta)^s$. 
For functions supported in $\bar \Omega$, we may utilize the integral representation \eqref{eq:integral_definition} to define $(-\Delta)^s$. This gives rise to the so-called \emph{restricted} or \emph{integral} fractional laplacian. Notice that we have materialized a zero Dirichlet condition by restricting the operator to act only on functions that are zero outside $\Omega$. We must immediately mention that in bounded domains, and in addition to the \emph{restricted} or \emph{integral} fractional Laplacian there are, at least, two others \emph{non-equivalent} definitions of nonlocal operators related to the fractional Laplacian: the \emph{regional} fractional Laplacian and the \emph{spectral} fractional Laplacian; see the discussion in \cite[Section 2]{MR3393253} and \cite[Section 6]{MR3503820}. We adopt the restricted or integral definition of the fractional Laplace operator $(-\Delta)^s$, which, from now on, we shall simply refer to as the \emph{integral fractional Laplacian}.}

Since the seminal work of Caffarelli and Silvestre \cite{CS:07}, the analysis of regularity properties of solutions to fractional partial differential equations (PDEs) has received a tremendous attention: fractional diffusion has been one of the most studied topics in the past decade \cite{CS:07,MR3276603,MR3504596,MR3348172,MR3168912,MR2270163}. Such an analysis has been motivated, in part, by the fact that the integral fractional Laplacian of order $2s$ corresponds to the infinitesimal generator of a $2s$-stable L\'evy process. These processes have been widely employed for modeling market fluctuations for both risk management and option pricing purposes \cite{MR2042661}. Further applications of fractional diffusion include material science (e.g. subsurface flow where nonlocal porous media models accurately describe the physical process) \cite{BensonWheatcraftEtAl2000_ApplicationFractionalAdvectionDispersionEquation,Biler2015,Silling2000_ReformulationElasticityTheoryDiscontinuities}, nonlocal electrostatics \cite{doi:10.1063/1.2819487}, image processing \cite{GilboaOsher2008_NonlocalOperatorsWithApplications,MR2578033}, fluids \cite{doi:10.1063/1.2208452}, predator search behaviour \cite{Sims}, and many others. It is then only natural that interest in efficient approximation schemes for these problems arises and that one might be interested in their control.

The study of solution techniques for problems involving fractional diffusion is a relatively new but rapidly growing area of research and thus it is impossible to provide a complete overview of the available results and limitations. We restrict ourselves to referring the interested reader to \cite{MR3893441} for a survey. In contrast to these advances, the study of solution techniques for PDE-constrained optimization problems involving fractional and nonlocal equations have not been fully developed. To the best of our knowledge, one of the first works in the elliptic setting is \cite{MR3158780}, where the authors consider an optimal control problem for a general nonlocal diffusion operator with finite range interactions. Later, an \emph{elliptic} optimal control problem for the \emph{spectral} fractional powers of elliptic operators was analyzed in \cite{AO}; numerical \EO{schemes} were also proposed and studied. Recently, a similar PDE-constrained optimization problem, but for the \emph{integral} fractional Laplacian, has been considered in \cite{DGO}. In this work, the authors analyze the underlying control problem, derive regularity estimates, propose numerical schemes, and derive a priori error estimates. We also mention \cite{semilinear}, where an optimal control problem for a fractional semilinear equation is considered.
Concerning parabolic optimal control problems, the first work that propose and study numerical schemes when the state equation is the fractional heat equation is \cite{MR3504977}. In this work, the authors consider the \emph{spectral} fractional powers of elliptic operators and derive error estimates for a fully discrete scheme that approximates the solution of the underlying optimal control problem. To close this paragraph, we would like to stress that the \EO{\emph{integral} and \emph{spectral}} definitions of the fractional Laplace operator \emph{do not coincide}. \EO{This, in particular, implies that the boundary behavior of solutions to
\begin{equation}
\label{eq:Dir_frac}
(-\Delta)^s \mathfrak{u} = \mathfrak{f} \textrm{ in } \Omega, 
\end{equation}
supplemented with suitable boundary conditions, is quite different depending on what definition for $(-\Delta)^s$ is adopted: integral or spectral. When the \emph{spectral} definition is considered, we supplement $-\Delta$ with homogeneous Dirichlet boundary conditions; $(-\Delta)^s$ is defined on the basis of eigenfunctions of $-\Delta$ that vanish on $\partial \Omega$. This gives rise to a suitable Dirichlet condition on problem \eqref{eq:Dir_frac}. In contrast, when the integral definition is considered, we supplement problem \eqref{eq:Dir_frac} with the Dirichlet condition $\mathfrak{u} = 0$ in $\Omega^c$. If $(-\Delta)^s$ corresponds to  \emph{integral} fractional Laplacian, $\Omega$ is smooth, and $\mathfrak{f} \in H^{1/2-s}(\Omega)$, then the solution $\mathfrak{u}$ of \eqref{eq:Dir_frac} is of the form \cite[formulas (7.7)--(7.12)]{MR3276603},}
\begin{equation}\label{boundary-grubb}
\mathfrak{u}(x) \approx \textrm{dist} (x,\partial\Omega)^s + \mathfrak{v} (x),
\end{equation}
with $\mathfrak{v}$ is smooth; hereafter $\textrm{dist} (x,\partial\Omega)$
indicates the distance from $x\in\Omega$ to $\partial\Omega$. \EO{In contrast, \cite[Theorem 1.3]{MR3489634} states that solutions of \eqref{eq:Dir_frac} with $\Laps$ being the \emph{spectral} fractional Laplacian, and $\Omega$ and $\mathfrak{f}$ being sufficiently smooth, behave as}
\begin{equation}\label{boundary-CS}
\begin{aligned}
\mathfrak{u}(x) \approx \textrm{dist} (x,\partial\Omega)^{2s} + \mathfrak{v}(x),
\quad
\mathfrak{u}(x) \approx \textrm{dist} (x,\partial\Omega)+ \mathfrak{v}(x)
\end{aligned}
\end{equation}
for $0 <s <\frac12$ and $\frac12 < s < 1$, respectively. The case $s = \frac12$ is exceptional; for $\Omega \subset \mathbb{R}^2$ with 
$\partial \Omega$ smooth, it holds that
$
\mathfrak{u}(x) \approx \textrm{dist} (x,\partial\Omega) \, |\log(\textrm{dist} (x,\partial\Omega))|+ \mathfrak{v}(x),
$
with $\mathfrak{v}$ smooth \cite{MR1204855}. This lack of boundary regularity is responsible for reduced rates of convergence when numerical schemes to approximate solutions are considered.

This exposition is the first one that studies approximations techniques for  \eqref{eq:min}--\eqref{eq:cc}. Let us briefly detail some of the main contributions of our work:
\begin{itemize}
\item[$\bullet$] \emph{Fractional heat equation}: We analyze the fractional heat equation \eqref{eq:fractional_heat}; we adopt the integral definition for $(-\Delta)^s$. We propose a fully discrete scheme to solve \eqref{eq:fractional_heat} and derive, in section \ref{sec:fully_scheme}, stability and error estimates for it. In particular, we obtain a novel $L^2(Q)$ a priori error estimate. 
\item[$\bullet$] \emph{Parabolic fractional optimal control problem}: We analyze the optimal control problem \eqref{eq:min}--\eqref{eq:cc}. We derive existence and uniqueness results together with first order necessary and sufficient optimality conditions. In addition, we derive regularity estimates for the optimal variables.
Notice that in view of \eqref{boundary-CS} and \eqref{boundary-grubb}, the derived regularity estimates are in sharp contrast with the ones \EO{obtained} in \cite[Theorem 17]{MR3504977}.
\item[$\bullet$] \emph{Fully discrete approximation:} We propose an implicit fully discrete approximation for the optimal control problem \eqref{eq:min}--\eqref{eq:cc}. We derive first order optimal conditions and perform an a priori error analysis. 
\item[$\bullet$] \emph{Complexity:} The approach taken to discretize and solve \eqref{eq:fractional_heat} is quite different from the spectral case discussed in \cite{MR3504977}. While \cite{MR3504977} transforms \eqref{eq:fractional_heat} into a quasi--stationary elliptic problem with a dynamic boundary condition on a \(n+1\)-dimensional domain, we directly discretize the integral form using matrix compression techniques to obtain quasi--optimal complexity. 
\end{itemize}

The outline of this paper is as follows. The notation and functional setting is described in section \ref{sec:Prelim}. In section \ref{sub:stateequation}, we derive the existence and uniqueness of a weak solution for problem \eqref{eq:fractional_heat}. In addition, we present energy estimates and review regularity results. In section \ref{sec:control}, we study the \emph{parabolic fractional optimal control problem} \EO{and} derive regularity estimates for the optimal variables. In section \ref{sec:a_priori_state}, we introduce a fully discrete scheme for \eqref{eq:fractional_heat}: we consider the standard backward Euler scheme for time discretization and a piecewise linear finite element discretization in space. For $s \in (0,1)$, we derive discrete stability results and \EO{an a priori error estimate in $L^2(Q)$}. Section \ref{sec:approximation_control} is devoted to the design and analysis of a numerical scheme to approximate the control problem \eqref{eq:min}--\eqref{eq:cc}. In particular, in section \ref{sub:apriori_control}, we derive a priori error estimates. Finally, section \ref{sec:numerical-examples} presents one- and two-dimensional numerical experiments that illustrate the theory developed in section \ref{sub:apriori_control}.

\section{Notation and preliminaries}
\label{sec:Prelim}
In this section, we will introduce some notation and the set of assumptions that we shall operate under.
\subsection{Notation}
\label{sub:notation}

Throughout this work $\Omega$ is an open and bounded domain with Lipschitz boundary $\partial \Omega$; \EO{when deriving regularity and error estimates we will assume that $\Omega$ is smooth.} The complement of $\Omega$ will be denoted by $\Omega^c$. If $T >0$ is a fixed time, we set $Q = \Omega \times (0,T)$. Whenever $\Xcal$ is a normed space we denote by $\| \cdot \|_{\Xcal}$ its norm and by $\Xcal'$ its dual. For normed spaces $\Xcal$ and $\Ycal$, we write $\Xcal \hookrightarrow \Ycal$ to indicate that $\Xcal$ is continuously embedded in $\Ycal$.

If $D\subset \R^{n}$ is open and $\phi: D \times [0,T] \to \R$, we consider $\phi$ as a function of $t$ with values in a Banach space $\Xcal$, \ie
$
 \phi:[0,T] \ni t \mapsto  \phi(t) \equiv \phi(\cdot,t) \in \Xcal.
$
For $1 \leq p \leq \infty$, $L^p( 0,T; \Xcal )$ is the space of $\Xcal$-valued functions whose $\Xcal$-norm is in $L^p(0,T)$. This is a Banach space for the norm
\[
  \| \phi \|_{L^p( 0,T;\Xcal)} = \left( \int_0^T \| \phi(t) \|^p_\Xcal \diff t\right)^{\hspace{-0.1cm}\frac{1}{p}} 
  , \quad 1 \leq p < \infty, \quad
  \| \phi \|_{L^\infty( 0,T;\Xcal)} = \esssup_{t \in (0,T)} \| \phi(t) \|_\Xcal.
\]

The relation $a \lesssim b$ indicates that $a \leq Cb$ with a nonessential constant $C$ that might change at each occurrence. 

\subsection{Function spaces}
\label{sub:function_spaces}
For any $s \geq 0$, we define $H^s(\mathbb{R}^n)$, the Sobolev space of order $s$ over $\mathbb{R}^n$, by \cite[Definition 15.7]{Tartar}
\[
 H^s(\mathbb{R}^n) := \left \{ v \in L^2(\mathbb{R}^n): (1+|\xi|^2)^{s/2} \mathcal{F}(v) \in L^2(\mathbb{R}^n)\right \}.
\]
With the space $H^s(\mathbb{R}^n)$ at hand, we define $\tilde H^s(\Omega)$ as the closure of $C_0^{\infty}(\Omega)$ in $H^s(\mathbb{R}^n)$. This space can be equivalently characterized by \cite[Theorem 3.29]{McLean}
\begin{equation}
\tilde H^s(\Omega) = \{v|_{\Omega}: v \in H^s(\mathbb{R}^n), \textrm{ supp } v \subset \overline\Omega\}.
\end{equation}
When $\partial \Omega$ is Lipschitz, $\tilde H^s(\Omega)$ is equivalent to $\mathbb{H}^s(\Omega)=[L^2(\Omega),H_0^1(\Omega)]_s$, the real interpolation between $L^2(\Omega)$ and $H_0^1(\Omega)$, for $s \in (0,1)$ and to $H^s(\Omega) \cap H_0^1(\Omega)$ for $s \in (1,3/2)$ \cite[Theorem 3.33]{McLean}. We denote by $H^{-s}(\Omega)$ the dual space of $\tilde H^s(\Omega)$ and by $\langle \cdot, \cdot \rangle$ the duality pair between these two spaces. We also define the bilinear form
\begin{equation}
\label{eq:bilinear_form}
 \mathcal{A}(v,w) = \frac{C(n,s)}{2} \iint_{\mathbb{R}^n \times \mathbb{R}^n} \frac{( v(x) - v(y) ) (w(x)-w(y))}{|x-y|^{n+2s}} \mathrm{d}x \mathrm{d}y.
\end{equation}
We denote by $\| \cdot \|_s$ the norm that $ \mathcal{A}(\cdot,\cdot)$ induces;
a multiple of the $H^s(\mathbb{R}^n)$-seminorm:
\EO{$
 \| v \|_s =  \sqrt{\mathcal{A}(v,v)}= \sqrt{\mathfrak{C}(n,s)} |v|_{H^s(\mathbb{R}^n)},
$
where $\mathfrak{C}(n,s) = \sqrt{C(n,s) / 2}$.}

\subsection{Elliptic regularity}
\label{sub:elliptic_regularity}
Let $f \in H^{-s}(\Omega)$. Since the bilinear form $\mathcal{A}$ is continuous and coercive, an application of the Lax-Milgram Lemma immediately yields the well-posedness of the following elliptic problem: Find $u \in \tilde H^s(\Omega)$ such that
\begin{equation}
\label{eq:elliptic_problem}
\mathcal{A}(u,v) = \langle f , v \rangle \quad \forall v \in \tilde H^s(\Omega).
\end{equation}

When $\partial \Omega$ is smooth the following regularity properties for $u$ can be derived.

\begin{proposition}[Sobolev regularity of $u$ on smooth domains]
Let $s \in (0,1)$ and $\Omega$ be a domain such that $\partial \Omega \in C^{\infty}$. If $f \in H^{r}(\Omega)$, for some $r \geq -s$, then the solution $u$ of problem \eqref{eq:elliptic_problem} belongs to $H^{s + \vartheta}(\Omega)$, where $\vartheta = \min \{s+r,1/2-\epsilon \}$ and $\epsilon > 0$ is arbitrarily small. In addition, the following estimate holds:
\begin{equation}
\| u \|_{H^{s+\vartheta}(\Omega)} \lesssim \| f \|_{H^{r}(\Omega)},
\label{eq:regularity_state_smooth}
 \end{equation}
where the hidden constant depends on $\Omega$, $n$, $s$, and $\vartheta$.
\label{pro:state_regularity_smooth}
\end{proposition}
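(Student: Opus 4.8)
The plan is to derive the statement from the sharp boundary regularity theory for the integral fractional Laplacian on smooth domains developed by Grubb \cite{MR3276603}, and then to translate its conclusions—stated there in the scale of H\"ormander $\mu$-transmission (Bessel potential) spaces—into the standard Sobolev scale $H^{s+\vartheta}(\Omega)$. We start from the weak solution $u \in \tilde H^s(\Omega)$ already furnished by the Lax--Milgram Lemma applied to \eqref{eq:elliptic_problem}, so the task is purely one of bootstrapping regularity and quantifying the gain. The essential phenomenon to capture is the one encoded in \eqref{boundary-grubb}: near $\partial\Omega$ the solution decomposes as $u(x) \approx \textrm{dist}(x,\partial\Omega)^s + \mathfrak{v}(x)$ with $\mathfrak{v}$ smooth, so the boundary layer $\textrm{dist}(x,\partial\Omega)^s$ is the sole obstruction to higher smoothness. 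I would therefore organize the argument around two regimes dictated by the size of $s+r$.

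First I would record the smoothing property: $(-\Delta)^s$ is an elliptic pseudodifferential operator of order $2s$, so away from the boundary it gains $2s$ derivatives, i.e. $f \in H^r(\Omega)$ produces $u \in H^{2s+r}_{\textup{loc}} = H^{s+(s+r)}_{\textup{loc}}$ in the interior. When $s+r \le \tfrac12-\epsilon$ this gain is admissible up to the boundary as well, since the target smoothness $s+(s+r)$ lies strictly below the threshold $s+\tfrac12$ at which the boundary layer first becomes singular; in that range the layer $\textrm{dist}(x,\partial\Omega)^s$ is itself in $H^{s+(s+r)}(\Omega)$ and poses no obstruction, whence $u \in H^{s+(s+r)}(\Omega)$, which is exactly $\vartheta = s+r$.

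Second, in the complementary regime $s+r > \tfrac12-\epsilon$, the cap $\vartheta = \tfrac12-\epsilon$ is forced by the layer. The key computation is the sharp Sobolev regularity of $\textrm{dist}(x,\partial\Omega)^s$: flattening $\partial\Omega$ by a smooth local change of variables and using a partition of unity reduces the analysis, modulo smoother terms, to the one-dimensional model $x_+^s$ cut off near the origin, for which a direct Fourier-transform computation shows $x_+^s \in H^\sigma(\mathbb{R})$ precisely when $\sigma < s+\tfrac12$. Consequently $\textrm{dist}(x,\partial\Omega)^s \in H^{s+1/2-\epsilon}(\Omega)$ for every $\epsilon>0$ but fails at $\epsilon=0$, so $u \in H^{s+1/2-\epsilon}(\Omega)$ and no more can be gained, giving $\vartheta = \tfrac12-\epsilon$. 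The a priori bound \eqref{eq:regularity_state_smooth} is then inherited: the solution operator $f \mapsto u$ is continuous in the corresponding transmission-space scale (this continuity is part of Grubb's statement), and it passes to the Sobolev norms through the relevant embeddings, with the hidden constant depending only on $\Omega$, $n$, $s$, and $\vartheta$.

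The main obstacle, I expect, is precisely the rigorous justification of the decomposition \eqref{boundary-grubb}—that the remainder $\mathfrak{v}$ is genuinely smoother than the $\textrm{dist}(x,\partial\Omega)^s$ layer—since this is not accessible by elementary Sobolev interpolation and requires the $\mu$-transmission pseudodifferential calculus of \cite{MR3276603} (or an equivalent localized half-space analysis with explicit kernels). For this reason I would invoke \cite[formulas (7.7)--(7.12)]{MR3276603} directly for the core boundary regularity, and confine the self-contained work to the sharp Sobolev regularity $s+\tfrac12-\epsilon$ of the boundary layer and to the two-regime bookkeeping that combines it with the order-$2s$ interior smoothing to yield $\vartheta = \min\{s+r,\tfrac12-\epsilon\}$ and the estimate \eqref{eq:regularity_state_smooth}.
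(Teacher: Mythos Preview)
Your proposal is correct and takes essentially the same approach as the paper: the paper's proof consists solely of the citation ``See \cite{MR0185273,MR3276603}'', so both rely on Grubb's $\mu$-transmission regularity theory, with your version spelling out the two-regime bookkeeping and the sharp $H^{s+1/2-\epsilon}$ regularity of the boundary layer $\textrm{dist}(x,\partial\Omega)^s$ that the paper leaves implicit in the reference.
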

\begin{proof}
\EO{See \cite{MR0185273,MR3276603}.}
\end{proof}

As a consequence of the previous result,  it can be observed that smoothness of $f$ does not ensure that solutions are any smoother than \EO{$\cap \{ H^{s+1/2-\epsilon}(\Omega): \epsilon > 0 \}$.}

When $\Omega$ is a bounded Lipschitz domain satisfying the exterior ball condition, the following regularity estimate can be derived \cite{MR3168912}: If $f \in L^{\infty}(\Omega)$, then $u \in C^s(\mathbb{R}^n)$.

\section{The state equation}
\label{sub:stateequation}
In this section, we derive the existence and uniqueness of a weak solution for the fractional heat equation \eqref{eq:fractional_heat}. In addition, we present an energy estimate and review regularity results.
\subsection{Eigenvalue problem}
\label{sec:eigenvalue_problem}
Let us introduce the eigenvalue problem: Find 
\begin{equation}
(\lambda,\varphi) \in \mathbb{R} \times \tilde H^s(\Omega) \setminus \{ 0 \}:
\qquad
\mathcal{A}(\varphi,v) = \lambda (\varphi,v)_{L^2(\Omega)} \quad \forall v \in \tilde H^s(\Omega).
\end{equation}
Spectral theory yields the existence of a countable collection of solutions $\{ \lambda_k,\varphi_k \} \subset \mathbb{R}^{+} \times \tilde H^s(\Omega)$ with the real eigenvalues enumerated in increasing order, counting multiplicities and such that $\{\varphi_k \}_{k \in \mathbb{N}}$ is an orthonormal basis of $L^2(\Omega)$ and an orthogonal basis of $\tilde H^s(\Omega)$.

\subsection{Solution representation}
We invoke the eigenparis $\{ \lambda_k,\varphi_k \}_{k \in \mathbb{N}}$, defined in \EO{the previous section}, and formally write the solution to problem \eqref{eq:fractional_heat} as
\begin{equation}
 \label{eq:solution_representation}
 \usf(x,t) = \sum_{k=1}^{\infty} \usf_k(t) \varphi_k(x).
\end{equation}
Since, at this formal stage, we have $\usf(x,0) = \usf_0(x)$,  this representation yields the following fractional initial value problem for $\usf_k$:
\begin{equation}
 \label{eq:u_k}
 \partial_t \usf_k(t) + \lambda_k \usf_k(t) = \fsf_k(t) + \zsf_k(t), \quad \usf_k(0) = \usf_{0,k},  \quad k \in \mathbb{N},
\end{equation}
where $\usf_{0,k} = ( \usf_0, \varphi_k)_{L^2(\Omega)}$, $\fsf_k(t) = (\fsf (\cdot,t),\varphi_k)_{L^2(\Omega)}$, and $\zsf_k(t) = (\zsf (\cdot,t),\varphi_k)_{L^2(\Omega)}$.
An explicit representation formula for the solution $\usf_k$ to problem \eqref{eq:u_k} holds:
\begin{equation}
\label{eq:u_k_solution}
\usf_k(t) = \usf_{k,0}e^{-\lambda_k t } + \int_{0}^{t} e^{-\lambda_k(t-r)} (\fsf_k(r) + \zsf_k(r)) \diff r.
\end{equation}

\subsection{Well--posedness}

A weak formulation for problem \eqref{eq:fractional_heat} reads as follows: Find $\usf \in \V$ such that $\usf(0) = \usf_0$ and, for a.e.~$t \in (0,T)$,
\begin{equation}
\label{eq:weak_formulation}
\langle \partial_t \usf, \phi \rangle + \mathcal{A}(\usf,\phi) 
  = \langle \fsf + \zsf,  \phi \rangle \qquad \forall \phi \in \tilde H^s(\Omega).
\end{equation}
The space $\mathbb{V}$ is defined as
\begin{equation}
 \label{eq:V}
 \mathbb{V} := \{ \vsf \in L^2(0,T;\tilde H^s(\Omega)) \cap L^{\infty}(0,T;L^2(\Omega)): \partial_t \vsf \in L^2(0,T;H^{-s}(\Omega)) \}.
\end{equation}

To simplify the exposition, we define
\begin{equation}
\label{eq:Sigma}
\Sigma^2 (\vsf,\gsf) := \| \vsf \|_{L^2(\Omega)}^2 +  \| \gsf \|^2_{L^2(0,T;H^{-s}(\Omega))}.
\end{equation}

\EO{We present the following existence and uniqueness result.}

\begin{theorem}[well--posedness of \eqref{eq:weak_formulation}]
\label{thm:exis_uniq}
Given $s \in (0,1)$, $\fsf \in L^2(0,T;H^{-s}(\Omega))$, $\zsf \in L^2(0,T;H^{-s}(\Omega))$, and $\usf_0 \in L^2(\Omega)$, problem \eqref{eq:weak_formulation} has a unique weak solution. In addition, we have the following energy estimate
\begin{equation}
\label{eq:energy_u}
\|\usf \|_{L^{^{\!\infty}\!}(0,T;L^2(\Omega))} + \|\usf \|_{L^2(0,T;H^s(\mathbb{R}^n))} \lesssim \Sigma (\usf_0,\fsf + \zsf).
\end{equation}
The hidden constant does not depend on $\usf$ nor the problem data.
\end{theorem}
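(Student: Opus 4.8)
The plan is to establish well-posedness through the classical Galerkin/spectral approach, exploiting the eigenfunction decomposition already set up in the preceding subsections. First I would use the explicit solution representation: define the candidate solution via the series $\usf(x,t) = \sum_{k=1}^\infty \usf_k(t) \varphi_k(x)$, where each coefficient $\usf_k$ is given by the Duhamel formula \eqref{eq:u_k_solution}. Since $\{\varphi_k\}$ is orthonormal in $L^2(\Omega)$ and orthogonal in $\tilde H^s(\Omega)$, with $\mathcal{A}(\varphi_k,\varphi_k) = \lambda_k$, the task reduces to showing that the appropriate weighted sums of $\usf_k(t)$ converge, i.e.\ that the partial sums form a Cauchy sequence in $\V$.

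The key step is the a priori energy estimate \eqref{eq:energy_u}, which I would derive by testing the weak formulation \eqref{eq:weak_formulation} with $\phi = \usf(t)$. This yields
\begin{equation*}
\frac{1}{2}\frac{\diff}{\diff t}\|\usf(t)\|_{L^2(\Omega)}^2 + \mathcal{A}(\usf(t),\usf(t)) = \langle \fsf + \zsf, \usf(t)\rangle.
\end{equation*}
Recalling $\mathcal{A}(\usf,\usf) = \|\usf\|_s^2$ comparable to $|\usf|_{H^s(\mathbb{R}^n)}^2$, I would bound the right-hand side using the duality pairing and Young's inequality: $\langle \fsf + \zsf, \usf\rangle \le \|\fsf+\zsf\|_{H^{-s}(\Omega)}\|\usf\|_s \le \tfrac{1}{2}\|\usf\|_s^2 + \tfrac{1}{2}\|\fsf+\zsf\|_{H^{-s}(\Omega)}^2$, absorbing the gradient term on the left. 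Integrating from $0$ to $t$ and taking the supremum over $t$, then using the initial datum $\usf(0)=\usf_0$, delivers control of $\|\usf\|_{L^\infty(0,T;L^2(\Omega))}$ and $\|\usf\|_{L^2(0,T;H^s(\mathbb{R}^n))}$ in terms of $\Sigma(\usf_0, \fsf+\zsf)$ as defined in \eqref{eq:Sigma}. Rigorously, this estimate should first be proved for the Galerkin approximations $\usf^N = \sum_{k=1}^N \usf_k(t)\varphi_k(x)$, where the computation is justified, and then passed to the limit.

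Having the uniform energy bound, I would pass to the limit $N \to \infty$: the bound shows $\{\usf^N\}$ is bounded in $L^2(0,T;\tilde H^s(\Omega)) \cap L^\infty(0,T;L^2(\Omega))$, and a companion estimate on $\partial_t \usf^N$ in $L^2(0,T;H^{-s}(\Omega))$ — obtained directly from the equation $\partial_t\usf^N = \fsf+\zsf - (-\Delta)^s\usf^N$ tested against $H^s$ functions — shows $\usf \in \V$. Weak compactness then extracts a subsequence converging to a limit $\usf$ satisfying \eqref{eq:weak_formulation}; the continuous embedding $\V \hookrightarrow C([0,T];L^2(\Omega))$ makes the initial condition $\usf(0)=\usf_0$ meaningful and attained. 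Uniqueness follows from linearity: the difference of two solutions solves the homogeneous problem with zero data, and the energy estimate forces it to vanish.

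I expect the main obstacle to be the careful handling of the time regularity and the duality pairing $\langle \partial_t \usf, \phi\rangle$, ensuring that all manipulations (in particular the identity $\langle \partial_t \usf, \usf\rangle = \tfrac{1}{2}\tfrac{\diff}{\diff t}\|\usf\|_{L^2(\Omega)}^2$) are rigorously justified for functions in $\V$ rather than merely formally. This is a standard but delicate point that is cleanly resolved by working with the finite-dimensional Galerkin system first and invoking the Aubin--Lions-type embedding for $\V$; the coercivity and continuity of $\mathcal{A}$ established via the Lax--Milgram discussion in section~\ref{sub:elliptic_regularity} supply everything else.
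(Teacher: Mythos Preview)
Your proposal is correct and follows essentially the same approach as the paper: the paper's proof simply states that existence, uniqueness, and the energy estimate \eqref{eq:energy_u} follow from the standard spectral decomposition approach based on the representation \eqref{eq:solution_representation}, and your write-up is a faithful expansion of exactly those details.
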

\begin{proof}
Existence and uniqueness of a weak solution for problem \eqref{eq:fractional_heat} can be obtained in view of a standard spectral decomposition approach based on the solution representation \eqref{eq:solution_representation}. The energy estimate \eqref{eq:energy_u} also follows from such a spectral decomposition approach. 
\end{proof}

Define
\begin{equation}
\label{eq:Upsilon}
\Upsilon^2 (\vsf,\gsf) := \| \vsf \|_{H^s(\mathbb{R}^n)}^2 +  \| \gsf \|^2_{L^2(0,T;L^2(\Omega))}.
\end{equation}

Let $\fsf \in L^2(0,T;L^2(\Omega))$, $\zsf \in L^2(0,T;L^2(\Omega))$, and $\usf_0 \in H^s(\mathbb{R}^n)$. Standard arguments, which heuristically entail multiplying the state equation \eqref{eq:fractional_heat} by the derivative of the solution $\usf$, yield the energy estimate
\begin{equation}
\label{eq:energy_u_2}
\| \partial_t \usf \|_{L^2(0,T;L^2(\Omega))} + \|\usf \|_{L^{\infty}(0,T;H^s(\mathbb{R}^n))} \lesssim \Upsilon (\usf_0,\fsf + \zsf),
\end{equation}
where the hidden constant does not depend on $\usf$ nor the problem data.
\subsection{Regularity estimates}

\EO{We present the following regularity result.


\begin{theorem}[regularity estimate]
Let $s \in (0,1)$ and $\Omega$ be a domain such that $\partial \Omega \in C^{\infty}$. If $\fsf + \zsf \in L^{2}(0,T;H^r(\Omega))$ and $\usf_0 \in \tilde H^{s+r}(\Omega)$, for some $-s \leq  r < 1/2 - s$, then the solution $\usf$ of problem \eqref{eq:weak_formulation} belongs to $L^2(0,T;H^{s + \vartheta}(\Omega))$, where $\vartheta = \min \{ s+r, 1/2 - \epsilon\}$ and $\epsilon >0$ is arbitrarily small. In addition, we have 
\begin{equation}
 \label{eq:regularity_estimate}
 \| \usf \|_{L^2(0,T;H^{s+\vartheta}(\Omega))} \lesssim \| \usf_0\|_{H^{s+r}(\mathbb{R}^n)} + \| \fsf +\zsf\|_{L^{2}(0,T;H^r(\Omega))}.
\end{equation}
The hidden constant is independent of $\usf$ and the problem data.
\label{th:regularity_estimate}
 \end{theorem}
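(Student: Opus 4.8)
The plan is to reduce the parabolic regularity statement to the elliptic regularity of Proposition~\ref{pro:state_regularity_smooth} by freezing the time variable. Rewriting the weak formulation \eqref{eq:weak_formulation} in the form $\mathcal{A}(\usf(t),\phi) = \langle (\fsf+\zsf)(t) - \partial_t\usf(t),\phi\rangle$ for all $\phi \in \tilde H^s(\Omega)$, we see that, for a.e.\ $t\in(0,T)$, the function $\usf(t)$ is the weak solution of the elliptic problem \eqref{eq:elliptic_problem} with right--hand side $\gsf(t):=(\fsf+\zsf)(t)-\partial_t\usf(t)$. Thus, if we can show that $\gsf \in L^2(0,T;H^r(\Omega))$ together with the bound $\|\gsf\|_{L^2(0,T;H^r(\Omega))}\lesssim \|\usf_0\|_{H^{s+r}(\mathbb{R}^n)}+\|\fsf+\zsf\|_{L^2(0,T;H^r(\Omega))}$, then applying \eqref{eq:regularity_state_smooth} pointwise in $t$, squaring, and integrating over $(0,T)$ (the hidden constant in Proposition~\ref{pro:state_regularity_smooth} being independent of $t$) immediately yields $\usf \in L^2(0,T;H^{s+\vartheta}(\Omega))$ with the asserted estimate \eqref{eq:regularity_estimate}. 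Since $\fsf+\zsf \in L^2(0,T;H^r(\Omega))$ by hypothesis, the whole matter reduces to establishing $\partial_t\usf \in L^2(0,T;H^r(\Omega))$ with the corresponding bound.

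To control $\partial_t\usf$ in the correct spatial scale I would exploit the spectral structure of section~\ref{sec:eigenvalue_problem}. Set $\gamma = r/(2s)$ and let $\mathcal{P}_\gamma w := \sum_{k} \lambda_k^{\gamma}(w,\varphi_k)_{L^2(\Omega)}\varphi_k$ denote the associated fractional power, defined on the eigenbasis $\{\varphi_k\}$ and normed, on its domain $D(\mathcal{P}_\gamma)$, by $\|\mathcal{P}_\gamma\cdot\|_{L^2(\Omega)}$. Since $\mathcal{P}_\gamma$ acts diagonally, it commutes with the solution operator of \eqref{eq:u_k}; consequently $\vsf:=\mathcal{P}_\gamma\usf$ is the weak solution of the fractional heat equation with datum $\mathcal{P}_\gamma(\fsf+\zsf)$ and initial value $\mathcal{P}_\gamma\usf_0$. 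I would then apply the energy estimate \eqref{eq:energy_u_2} to $\vsf$: it gives $\partial_t\vsf = \mathcal{P}_\gamma\partial_t\usf \in L^2(0,T;L^2(\Omega))$, that is $\partial_t\usf\in L^2(0,T;D(\mathcal{P}_\gamma))$, provided $\mathcal{P}_\gamma(\fsf+\zsf)\in L^2(0,T;L^2(\Omega))$ and $\mathcal{P}_\gamma\usf_0\in H^s(\mathbb{R}^n)=\tilde H^s(\Omega)$. Using the identification of fractional--power domains with Sobolev spaces, $D(\mathcal{P}_\sigma)\cong \tilde H^{2\sigma s}(\Omega)$, together with the coincidence $\tilde H^\theta(\Omega)=H^\theta(\Omega)$ for $|\theta|<\tfrac12$, these two requirements translate, respectively, into $\fsf+\zsf\in L^2(0,T;H^{r}(\Omega))$ (since $2\gamma s=r<\tfrac12$) and $\usf_0\in \tilde H^{s+r}(\Omega)$ (since $2(\gamma+\tfrac12)s=s+r<\tfrac12$), both of which hold by hypothesis. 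Collecting the resulting bounds yields $\partial_t\usf\in L^2(0,T;H^r(\Omega))$ with the desired estimate, and the first paragraph then closes the argument.

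The main obstacle is precisely the identification of the domains $D(\mathcal{P}_\sigma)$ of the fractional powers of the integral fractional Laplacian with the Sobolev spaces $\tilde H^{2\sigma s}(\Omega)$. This identification is delicate because functions in these domains carry the boundary layer $\mathrm{dist}(x,\partial\Omega)^s$ exhibited in \eqref{boundary-grubb}, which obstructs matching with the plain Sobolev scale once the differentiability order crosses $\tfrac12$. This is exactly why the admissible range is capped at $r<\tfrac12-s$ and why $\vartheta\le\tfrac12-\epsilon$: both the order $s+r$ required for $\usf_0$ and the spatial order $r$ of $\partial_t\usf$ must stay strictly below $\tfrac12$, where no boundary conditions are seen and the two scales coincide. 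For $r\le0$ (equivalently $\gamma\le0$) all exponents involved remain below $\tfrac12$ automatically and the identifications are classical; the case $0<r<\tfrac12-s$, which forces $s<\tfrac12$, is where the verification of $\mathcal{P}_\gamma\usf_0\in\tilde H^s(\Omega)$ starting from $\usf_0\in\tilde H^{s+r}(\Omega)$ demands the most care, and I would handle it by invoking the sharp interpolation and domain--characterization results underlying Proposition~\ref{pro:state_regularity_smooth}.
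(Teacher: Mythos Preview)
Your approach is genuinely different from the paper's and, with the caveats you already flag, essentially workable. The paper does \emph{not} reduce to the elliptic problem at frozen times, nor does it invoke spectral powers. Instead it proceeds in two strokes. First, for $\usf_0\equiv 0$ it cites directly Grubb's parabolic regularity theory for the fractional heat equation in the scale of H\"ormander $\mu$--transmission spaces \cite{MR3771838,MR3880448}, which yields $\usf\in L^2(0,T;H^{s+\vartheta}(\Omega))$ in one application. Second, for general $\usf_0\in\tilde H^{s+r}(\Omega)$ it uses the interpolation identity $\tilde H^{s+r}(\Omega)=[\tilde H^{r+2s}(\Omega),\tilde H^{r}(\Omega)]_{1/2}$ together with a Lions--Magenes lifting \cite[Chapter~1, Theorem~3.2]{Lions} to manufacture $\wsf$ with $\wsf(0)=\usf_0$, $\wsf\in L^2(0,T;\tilde H^{r+2s}(\Omega))$, $\partial_t\wsf\in L^2(0,T;\tilde H^{r}(\Omega))$; then $\mathfrak u=\usf-\wsf$ has zero initial datum and right--hand side still in $L^2(0,T;H^r(\Omega))$, so the first step applies.

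What each route buys: the paper's argument is short because the hard work is encapsulated in the cited parabolic result; your argument is more explicit about \emph{why} the $1/2$--threshold appears, but the decisive step---the identification $D(\mathcal P_\sigma)\cong\tilde H^{2\sigma s}(\Omega)$ for $2\sigma s<\tfrac12$---is itself equivalent to Grubb's elliptic domain characterizations and cannot be obtained from Proposition~\ref{pro:state_regularity_smooth} alone (that proposition gives only the inclusion $D(A)\subset H^{s+\vartheta}$, not the equality with $\tilde H^{2s}$, and says nothing about higher powers). So both proofs ultimately rest on the same deep elliptic input; the paper packages it through a black--box parabolic citation plus a lifting, whereas you unroll it through the spectral calculus and a pointwise-in-$t$ elliptic estimate. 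If you pursue your route, the piece that needs a precise reference or a careful independent proof is exactly the one you single out in your last paragraph.
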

\begin{proof}
Assume that $\usf_0 \equiv 0$. We are thus in position to apply the results of \cite[Theorem 0.2]{MR3771838} and \cite[Theorem 5.8]{MR3880448} to conclude the desired regularity properties for $\usf$. Notice that we have also used the relations (3.20) and (5.19) in \cite{MR3880448} to relate the involved H\"{o}rmander spaces with Sobolev spaces. The case $\usf_0 \neq 0$ can be treated as in the proof of \cite[Theorem 3.2, Chapter 4]{MR0350178}. Indeed, since $\usf_0 \in \tilde H^{r+s}(\Omega)$ and  
\[\tilde H^{r+s}(\Omega)= [\tilde H^{r+2s}(\Omega),\tilde H^r(\Omega)]_{1/2}\] 
\cite[Theorem 11.6, Chapter 1]{Lions}, we invoke \cite[Theorem 3.2, Chapter 1]{Lions}, with $X =  \tilde H^{r+2s}(\Omega)$, $Y =  \tilde H^{r}(\Omega)$, $m=1$, and $j=0$, to conclude the existence of $\wsf$ such that $\wsf(0) = \usf_0$,
$
\wsf \in L^2(0;T; \tilde H^{r+2s}(\Omega)),
$
and
$
\partial_t \wsf \in L^2(0,T; \tilde H^{r}(\Omega)).
$
Define $\mathfrak{u}:= \usf - \wsf$ and observe that $\frak{u}$ satisfies
\[
\partial_t \mathfrak{u} + (-\Delta)^s \mathfrak{u} = \fsf + \zsf - \left( \partial_t \wsf + (-\Delta)^s \wsf \right)  \textrm{ in } \Omega \times (0,T),
\quad
\mathfrak{u}(0)= 0 \textrm{ in } \Omega,
\]
and $\mathfrak{u} = 0 \textrm{ in } \Omega^c \times (0,T)$. Since $\mathfrak{u}(0)= 0$ and $\partial_t \wsf + (-\Delta)^s \wsf \in L^2(0,T; H^{r}(\Omega))$ we can apply the results of \cite[Theorem 0.2]{MR3771838} and \cite[Theorem 5.8]{MR3880448} to conclude the desired regularity for $\mathfrak{u}$. Invoke, again, the properties that $\wsf$ satisfies to obtain \eqref{eq:regularity_estimate}. This concludes the proof.
\end{proof}
}

\section{The fractional control problem}
\label{sec:control}

In this section, we study the \emph{parabolic fractional optimal control problem}. We provide existence and uniqueness results together with first order necessary and sufficient optimality conditions.

The parabolic fractional optimal control problem reads: Find
$
 \text{min } J(\usf,\zsf) 
$
subject to the state equation \eqref{eq:fractional_heat} and the control constraints \eqref{eq:cc}. The set of \emph{admissible controls} is defined by
\begin{equation}
 \label{ac}
 \Zad:= \left\{ \wsf \in L^2(Q):\ \asf(x,t) \leq \wsf(x,t) \leq \bsf(x,t)~\textrm{a.e.}~(x,t) \in Q \right\}.
\end{equation}
Notice that $\Zad$ is a nonempty, bounded, closed, and convex subset of $L^2(Q)$. \EO{We assume that the desired state $\usf_d \in L^2(Q)$.}

To analyze \eqref{eq:min}--\eqref{eq:cc}, we introduce the so-called control to state operator.

\begin{definition}[control to state operator]
\label{def:fractional_operator}
The map $\mathbf{S}: L^2(0,T;H^{-s}(\Omega)) \ni \zsf \mapsto \usf(\zsf) \in \mathbb{V}$, where $\usf(\zsf)$ solves \eqref{eq:fractional_heat}, is called the fractional control to state operator.
\end{definition}

We immediately notice that the control to state operator $\mathbf{S}$ is affine. \EO{In fact, 
$
 \mathbf{S}(\zsf) = \mathbf{S}_0(\zsf) + \psi_0,
$
where} $\mathbf{S}_0(\zsf)$ denotes the solution to \eqref{eq:fractional_heat} with $\fsf=0$ and $\usf_0 = 0$, while $\psi_0$ solves \eqref{eq:fractional_heat} with $\zsf = 0$. Notice that $\mathbf{S}_0$ is linear and continuous. By the estimates of Theorem~\ref{thm:exis_uniq}, $\mathbf{S}$ is continuous as well. Since $\mathbb{V} \hookrightarrow L^2(Q) \hookrightarrow L^2(0,T;H^{-s}(\Omega))$, we may consider the operator $\mathbf{S}$ as acting from $L^2(Q)$ into itself. For simplicity, we keep the notation $\mathbf{S}$. 

We now define an optimal fractional state-control pair.
\begin{definition}[optimal fractional state-control pair]
A state-control pair $(\ousf(\ozsf),\ozsf) \in \mathbb{V} \times \Zad$ is called optimal for \eqref{eq:min}--\eqref{eq:cc} if $\ousf(\ozsf) = \mathbf{S} \ozsf$ and
\[
 J(\ousf(\ozsf),\ozsf) \leq J(\usf(\zsf),\zsf)
\]
for all $(\usf(\zsf),\zsf) \in \mathbb{V} \times \Zad$ such that $\usf(\zsf)= \mathbf{S} \zsf$.
\label{def:fractional_pair}
\end{definition}

The existence and uniqueness of an optimal state-control pair is as follows.

\begin{theorem}[existence and uniqueness]
\label{TH:optimal_control}
The optimal control problem \eqref{eq:min}--\eqref{eq:cc} has a unique solution $(\ousf(\ozsf), \ozsf) \in  \EO{\mathbb{V} \times \Zad}$.
\end{theorem}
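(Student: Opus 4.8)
The plan is to prove existence and uniqueness via the classical direct method of the calculus of variations, exploiting the convexity and coercivity of the reduced cost functional. First I would reduce the problem to a minimization over the admissible set only. Since the control-to-state operator $\mathbf{S}$ is affine and continuous from $L^2(Q)$ into itself (as noted after Definition~\ref{def:fractional_operator}), I define the \emph{reduced cost functional}
\begin{equation}
\label{eq:reduced_cost}
j(\zsf) := J(\mathbf{S}\zsf, \zsf) = \tfrac{1}{2} \| \mathbf{S}\zsf - \usf_d \|_{L^2(Q)}^2 + \tfrac{\mu}{2} \| \zsf \|_{L^2(Q)}^2,
\end{equation}
so that the optimal control problem \eqref{eq:min}--\eqref{eq:cc} is equivalent to minimizing $j$ over $\Zad$. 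The goal is then to show $j$ attains a unique minimum on $\Zad$.

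For existence, I would verify the three standard ingredients of the direct method. The admissible set $\Zad$ is nonempty, bounded, closed, and convex in $L^2(Q)$ (as already recorded in the excerpt), hence weakly sequentially compact by the Banach--Alaoglu theorem together with reflexivity of $L^2(Q)$. The functional $j$ is bounded below (it is a sum of squared norms, so $j \geq 0$). Finally, $j$ is weakly lower semicontinuous: this follows because $j$ is convex and continuous on $L^2(Q)$, and a convex continuous functional is weakly lower semicontinuous. Convexity of $j$ holds since $\zsf \mapsto \mathbf{S}\zsf - \usf_d$ is affine and the map $v \mapsto \tfrac12\|v\|_{L^2(Q)}^2$ is convex, while the regularization term $\tfrac{\mu}{2}\|\zsf\|_{L^2(Q)}^2$ is convex. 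Taking a minimizing sequence $\{\zsf_k\} \subset \Zad$, boundedness of $\Zad$ yields a weakly convergent subsequence $\zsf_k \rightharpoonup \ozsf$; closedness and convexity of $\Zad$ give $\ozsf \in \Zad$ (closed convex sets are weakly closed by Mazur's lemma), and weak lower semicontinuity gives $j(\ozsf) \leq \liminf_k j(\zsf_k) = \inf_{\Zad} j$, so $\ozsf$ is a minimizer. The associated state is $\ousf = \mathbf{S}\ozsf \in \mathbb{V}$.

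For uniqueness, I would invoke \emph{strict convexity} of $j$, which comes from the regularization term. Because $\mu > 0$, the map $\zsf \mapsto \tfrac{\mu}{2}\|\zsf\|_{L^2(Q)}^2$ is strictly convex, and since the fidelity term $\tfrac12\|\mathbf{S}\zsf - \usf_d\|_{L^2(Q)}^2$ is convex (being a convex function composed with an affine map), the sum $j$ is strictly convex on the convex set $\Zad$. A strictly convex functional admits at most one minimizer over a convex set, so the minimizer $\ozsf$ is unique, and correspondingly the optimal state $\ousf = \mathbf{S}\ozsf$ is unique.

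The only point requiring genuine care — and what I expect to be the main (though mild) obstacle — is the weak lower semicontinuity of the fidelity term $\zsf \mapsto \tfrac12\|\mathbf{S}\zsf - \usf_d\|_{L^2(Q)}^2$ under merely weak convergence of the controls. The clean way to handle this is to argue by convexity: rather than chasing weak convergence of the states $\mathbf{S}\zsf_k$ directly, one uses that a convex and strongly continuous functional on a Banach space is automatically weakly lower semicontinuous, and $j$ is convex and continuous on $L^2(Q)$ because $\mathbf{S}$ is continuous there. This avoids any compactness argument for the state equation and keeps the proof short. With this observation in place, the remaining steps are entirely routine applications of the direct method together with the strict convexity granted by $\mu > 0$.
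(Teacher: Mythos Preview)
Your proposal is correct and follows essentially the same approach as the paper: reduce to the functional $j(\zsf) = \tfrac12\|\mathbf{S}\zsf - \usf_d\|_{L^2(Q)}^2 + \tfrac{\mu}{2}\|\zsf\|_{L^2(Q)}^2$, invoke strict convexity (from $\mu>0$), weak lower semicontinuity, and weak sequential compactness of $\Zad$, and conclude via the direct method. The paper's proof is simply a terser version of what you wrote, citing the direct method directly rather than spelling out the minimizing-sequence argument.
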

\begin{proof}
Invoke $\mathbf{S}$ and reduce the optimal control problem \eqref{eq:min}--\eqref{eq:cc} to: Minimize
\begin{equation}
\label{f_opt}
\EO{j}(\zsf): = \frac12  \| \mathbf{S} \zsf - \usf_d \|^2_{L^2(Q)} + \frac\mu2 \|\zsf \|^2_{L^2(Q)}
\end{equation}
over $\Zad$. The strict convexity of \EO{$j$} is immediate ($\mu >0$). In addition, \EO{$j$} is weakly lower semicontinuous and $\Zad$ is  weakly sequentially compact. The direct method of the calculus of variations \cite[Theorem 5.51]{MR3026831} allows us to conclude.
\end{proof}

\subsection{Optimality conditions}
\label{sub:optim}
The following result is standard.

\begin{lemma}[variational inequality]
$\ozsf \in \Zad$ minimizes $f$ over $\Zad$ if and only if it solves the variational inequality
\begin{equation}
\label{vi}
  (\EO{j}'(\ozsf),\zsf - \ozsf )_{L^2(Q)} \geq 0
\end{equation}
for every $\zsf \in \Zad$.
\label{le:variational_inequality}
\end{lemma}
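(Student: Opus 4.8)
The plan is to establish this as the standard first-order characterization of a minimizer of a convex, Gâteaux differentiable functional over a convex set, proving the two implications separately. The two structural ingredients I would isolate first are: (i) the reduced cost $j$ defined in \eqref{f_opt} is convex and Gâteaux differentiable on $L^2(Q)$, and (ii) $\Zad$ is convex. Property (ii) was already recorded right after \eqref{ac}. For (i), since $\mathbf{S} = \mathbf{S}_0 + \psi_0$ with $\mathbf{S}_0$ linear and continuous from $L^2(Q)$ into itself, $j$ is a quadratic functional in $\zsf$; it is therefore Fréchet, and hence Gâteaux, differentiable, and its convexity follows from the nonnegativity of the quadratic part $\tfrac12 \| \mathbf{S}_0 \zsf \|^2_{L^2(Q)} + \tfrac\mu2 \| \zsf \|^2_{L^2(Q)}$.

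For the necessity direction, I would fix an arbitrary $\zsf \in \Zad$ and exploit the convexity of $\Zad$: the segment $\ozsf + t(\zsf - \ozsf) = (1-t)\ozsf + t\zsf$ lies in $\Zad$ for every $t \in [0,1]$. Optimality of $\ozsf$ then gives $j(\ozsf + t(\zsf - \ozsf)) - j(\ozsf) \geq 0$, so dividing by $t > 0$ and passing to the limit $t \to 0^+$ yields, by the Gâteaux differentiability of $j$, the variational inequality \eqref{vi}.

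For the sufficiency direction, I would use the gradient inequality for convex differentiable functionals, namely $j(\zsf) \geq j(\ozsf) + (j'(\ozsf), \zsf - \ozsf)_{L^2(Q)}$, valid for all $\zsf \in L^2(Q)$. Combined with \eqref{vi}, the rightmost term is nonnegative for every $\zsf \in \Zad$, whence $j(\zsf) \geq j(\ozsf)$ on $\Zad$; that is, $\ozsf$ minimizes $j$ over $\Zad$.

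I do not anticipate any genuine obstacle here, as the result is entirely standard; the only points requiring care are the verification that the quadratic structure of $j$ indeed delivers both convexity and differentiability, and that the difference quotients converge to the Gâteaux derivative. An explicit form of the gradient, $j'(\zsf) = \mathbf{S}_0^{*}(\mathbf{S}\zsf - \usf_d) + \mu \zsf$, where $\mathbf{S}_0^{*}$ denotes the adjoint of $\mathbf{S}_0$, is not needed for the lemma itself but would be the natural next step for rewriting \eqref{vi} in terms of an adjoint state.
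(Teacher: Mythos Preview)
Your argument is correct and complete: the two implications via the difference-quotient limit and the convex gradient inequality are exactly the standard proof of this characterization. The paper does not give its own argument but simply cites \cite[Lemma~2.21]{Tbook}, so your write-up is in fact more self-contained than what appears there; the underlying reasoning is the same.
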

\begin{proof}
See \cite[Lemma 2.21]{Tbook}.
\end{proof}

To explore first order optimality conditions, we introduce the adjoint state.

\begin{definition}[fractional adjoint state]
\label{def:fractional_adjoint} 
The solution $\psf = \psf(\zsf) \in \mathbb{V}$ of
\begin{equation}
\label{fractional_adjoint}
  - \partial_{t} \psf + (-\Delta)^s \psf = \usf - \usf_d \ \mathrm{in} \ Q, \quad
  \psf = 0 \ \mathrm{in} \  \Omega^c \times (0,T), \quad
  \psf(T) = 0 \ \mathrm{in} \ \Omega,
\end{equation}
for $\zsf \in L^2(0,T;H^{-s}(\Omega))$, is called the fractional adjoint state associated to $\usf = \usf(\zsf)$.
\end{definition}

The following result is instrumental.

\begin{lemma}[auxiliary result]
\label{le:identity}
Let $\ozsf$ denote the optimal control for problem \eqref{eq:min}--\eqref{eq:cc} and $\ousf = \mathbf{S}\ozsf$. For every $\zsf \in \Zad$, we have
\begin{equation}
\label{identity}
  (\ousf - \usf_d,\usf - \ousf)_{ L^2(Q)} =  ( \opsf, \zsf - \ozsf)_{L^2(Q)},
\end{equation}
where $\usf = \mathbf{S}\zsf \in \mathbb{V}$ and $\psf = \psf(\zsf) \in \mathbb{V}$ solve problems \eqref{eq:weak_formulation} and \eqref{fractional_adjoint}, respectively.
\end{lemma}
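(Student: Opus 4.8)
The plan is to exploit the affine structure of the control-to-state operator, so that the state difference $\usf-\ousf$ solves a state equation with homogeneous data, and then to pair this difference against the optimal adjoint state and integrate by parts in time. Throughout, $\opsf$ denotes the adjoint state associated with the optimal state $\ousf$, i.e. the solution of \eqref{fractional_adjoint} with $\ousf$ in place of $\usf$; its membership in $\mathbb{V}$ follows from Theorem~\ref{thm:exis_uniq} applied to the time-reversed, and hence forward parabolic, problem, since $\ousf-\usf_d\in L^2(Q)\hookrightarrow L^2(0,T;H^{-s}(\Omega))$.

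First I would set $w:=\usf-\ousf=\mathbf{S}\zsf-\mathbf{S}\ozsf=\mathbf{S}_0(\zsf-\ozsf)$; the affine part $\psi_0$ cancels and linearity of $\mathbf{S}_0$ applies, so $w\in\mathbb{V}$ is the weak solution of \eqref{eq:weak_formulation} with right-hand side $\zsf-\ozsf$, homogeneous initial datum $w(0)=0$, and $w=0$ in $\Omega^c\times(0,T)$. Next I would compute $(\ousf-\usf_d,w)_{L^2(Q)}$ by inserting the strong form $\ousf-\usf_d=-\partial_t\opsf+(-\Delta)^s\opsf$ of the adjoint equation and testing against $w$, which gives
\[
(\ousf-\usf_d,w)_{L^2(Q)}=\int_0^T\left[-\langle\partial_t\opsf,w\rangle+\mathcal{A}(\opsf,w)\right]\diff t.
\]

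Then I would invoke the integration-by-parts-in-time formula for functions in $\mathbb{V}$,
\[
\int_0^T\langle\partial_t\opsf,w\rangle\diff t+\int_0^T\langle\partial_t w,\opsf\rangle\diff t=(\opsf(T),w(T))_{L^2(\Omega)}-(\opsf(0),w(0))_{L^2(\Omega)},
\]
and use the terminal condition $\opsf(T)=0$ together with the initial condition $w(0)=0$ to annihilate both boundary terms. Combined with the symmetry of $\mathcal{A}$, this reduces the previous identity to
\[
(\ousf-\usf_d,w)_{L^2(Q)}=\int_0^T\left[\langle\partial_t w,\opsf\rangle+\mathcal{A}(w,\opsf)\right]\diff t.
\]
Finally I would recognise the integrand as the weak formulation \eqref{eq:weak_formulation} satisfied by $w$, tested with $\phi=\opsf(t)\in\tilde H^s(\Omega)$ for a.e.~$t$; since its right-hand side is $\zsf-\ozsf\in L^2(Q)$, we obtain $\langle\partial_t w,\opsf\rangle+\mathcal{A}(w,\opsf)=(\zsf-\ozsf,\opsf)_{L^2(\Omega)}$, and integration over $(0,T)$ yields \eqref{identity} after recalling $w=\usf-\ousf$.

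The main obstacle is the rigorous justification of the temporal integration by parts: it requires that both $w$ and $\opsf$ belong to $\mathbb{V}$, so that the $L^2(\Omega)$-continuous-in-time representatives $w(0)$ and $\opsf(T)$ are well defined and the Lions-type formula is licit. This is precisely where the membership $\opsf\in\mathbb{V}$, secured through Theorem~\ref{thm:exis_uniq}, is essential; the remaining manipulations—substitution of the strong form, symmetry of $\mathcal{A}$, and identification with the weak formulation of $w$—are routine.
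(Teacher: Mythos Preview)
Your argument is correct and essentially coincides with the paper's proof: define the state difference $w=\usf-\ousf$ (the paper calls it $\chi$), test its equation with $\opsf$, test the adjoint equation with $w$, and use integration by parts in time together with $w(0)=0$, $\opsf(T)=0$, and the symmetry of $\mathcal{A}$ to match the two expressions. Your explicit remark that both $w$ and $\opsf$ lie in $\mathbb{V}$, so the Lions-type integration-by-parts formula is justified, is a welcome clarification that the paper leaves implicit.
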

\begin{proof}
Define $\chi:= \usf - \ousf \in \mathbb{V}$. Since $\usf$ solves \eqref{eq:weak_formulation} and $\ousf = \mathbf{S}\ozsf$, we obtain that $\chi(0) = 0$ in $\Omega$ and that, for a.e. $t \in (0,T)$,
\begin{equation}
\label{phi}
    \langle \partial_t \chi, \phi \rangle  + \mathcal{A}(\chi, \phi)  = (\zsf - \ozsf, \phi)_{L^2(\Omega)} 
    \quad \forall \phi \in \tilde H^s(\Omega).
\end{equation}
Set $\phi = \opsf(t)$ in \eqref{phi} and integrate over time.
In view of the initial condition $\chi(0)=0$, the terminal condition $\opsf(T)=0$, and the symmetry of the bilinear form $\mathcal{A}$, an integration by parts formula yields
\[
\int_{0}^{T}\left[ - \langle \partial_{t} \opsf, \chi \rangle  + \mathcal{A}(\opsf,\chi)  \right] \diff t =  ( \opsf, \zsf - \ozsf)_{L^2(Q)}.
\]

Now, set $\chi$ as a test function in the weak version of \eqref{fractional_adjoint} and integrate over time. These arguments allow us to arrive at
\[
 \int_{0}^T \left[- \langle \partial_{t} \opsf,\chi \rangle  + \mathcal{A}( \opsf , \chi) \right] \diff t = (\ousf-\usf_d,\usf - \ousf)_{ L^2(Q)}.
\]

The desired identity \eqref{identity} follows immediately from the derived expressions.
\end{proof}

We now prove necessary and sufficient optimality conditions for \eqref{eq:min}--\eqref{eq:cc}.

\begin{theorem}[first-order optimality conditions]
\label{TH:fractional_fooc}
$\ozsf \in \Zad$ is the optimal control of problem \eqref{eq:min}--\eqref{eq:cc} if and only if it solves the variational inequality
\begin{equation}
\label{viz}
\left( \mu \ozsf + \opsf, \zsf - \ozsf \right)_{L^2(Q)} \geq 0 \qquad \forall \zsf \in \Zad,
\end{equation}
where $\opsf = \opsf(\ozsf)$ solves \eqref{fractional_adjoint} with $\usf$ replaced by $\ousf$.
\end{theorem}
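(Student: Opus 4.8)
The plan is to derive the optimality condition \eqref{viz} by combining the abstract variational inequality of Lemma~\ref{le:variational_inequality} with the representation of the adjoint provided by Lemma~\ref{le:identity}. The starting point is that, by Theorem~\ref{TH:optimal_control}, the optimal control $\ozsf$ is the unique minimizer of the reduced functional $j$ over $\Zad$, and by Lemma~\ref{le:variational_inequality} this is equivalent to the variational inequality $(j'(\ozsf), \zsf - \ozsf)_{L^2(Q)} \geq 0$ for all $\zsf \in \Zad$. Thus the entire task reduces to computing the Gateaux derivative $j'(\ozsf)$ and showing that it equals $\mu \ozsf + \opsf$.

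First I would differentiate $j(\zsf) = \tfrac12 \|\mathbf{S}\zsf - \usf_d\|^2_{L^2(Q)} + \tfrac\mu2 \|\zsf\|^2_{L^2(Q)}$. Since $\mathbf{S}$ is affine with linear part $\mathbf{S}_0$, the directional derivative of the first term in the direction $\zsf - \ozsf$ is $(\ousf - \usf_d, \mathbf{S}_0(\zsf - \ozsf))_{L^2(Q)}$, where $\ousf = \mathbf{S}\ozsf$. The key observation is that $\mathbf{S}_0(\zsf - \ozsf)$ is precisely $\usf - \ousf$ (the difference $\chi$ appearing in Lemma~\ref{le:identity}), because $\mathbf{S}$ is affine and the shared data $\fsf, \usf_0$ cancel in the difference. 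The derivative of the regularization term is simply $(\mu \ozsf, \zsf - \ozsf)_{L^2(Q)}$. Hence
\[
(j'(\ozsf), \zsf - \ozsf)_{L^2(Q)} = (\ousf - \usf_d, \usf - \ousf)_{L^2(Q)} + (\mu \ozsf, \zsf - \ozsf)_{L^2(Q)}.
\]
At this point I would invoke Lemma~\ref{le:identity}, which rewrites the first term on the right-hand side exactly as $(\opsf, \zsf - \ozsf)_{L^2(Q)}$, where $\opsf$ solves the adjoint equation \eqref{fractional_adjoint} with $\usf$ replaced by $\ousf$. Substituting yields $(j'(\ozsf), \zsf - \ozsf)_{L^2(Q)} = (\mu \ozsf + \opsf, \zsf - \ozsf)_{L^2(Q)}$, and the equivalence in Lemma~\ref{le:variational_inequality} then gives \eqref{viz} immediately, with both directions of the ``if and only if'' following from the equivalence already established there.

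I do not expect a serious obstacle here, since the heavy analytic lifting has been packaged into the preceding lemmas; the proof is essentially an assembly step. The one point requiring minor care is the correct identification of the linearization $\mathbf{S}_0(\zsf - \ozsf) = \usf - \ousf$ and the verification that the chain rule applies, i.e.\ that $j$ is indeed Gateaux differentiable with the stated derivative. This is routine given that $\mathbf{S}_0$ is linear and continuous from $L^2(Q)$ into itself (as noted after Definition~\ref{def:fractional_operator}) and that the squared-norm functionals are smooth; one simply expands $j(\ozsf + \tau(\zsf - \ozsf))$ as a quadratic polynomial in $\tau$ and reads off the coefficient of the linear term. The genuinely substantive content—relating the control-to-state sensitivity to the adjoint state via integration by parts in time and the symmetry of $\mathcal{A}$—has already been carried out in Lemma~\ref{le:identity}, so the remaining work is purely algebraic.
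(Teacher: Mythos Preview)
Your proposal is correct and follows exactly the same approach as the paper: the paper's proof simply states that the result follows from combining Lemmas~\ref{le:variational_inequality} and~\ref{le:identity} (referring to \cite[Theorem 3.19]{Tbook} for details), and your write-up is precisely a fleshed-out version of that combination, computing $j'(\ozsf)$ explicitly and invoking the identity \eqref{identity} to rewrite it as $\mu\ozsf + \opsf$.
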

\begin{proof}
\EO{The inequality \ref{le:variational_inequality}  follows from combining the results of Lemmas \ref{le:variational_inequality} and \ref{le:identity}. We refer the reader to the proof of \cite[Theorem 3.19]{Tbook} for details.}
\end{proof}

\subsection{Regularity \EO{estimates}}
\label{sub:regularity}
In this section we derive regularity estimates \EO{for the optimal variables.} To accomplish this task, we recall the projection formula
\begin{equation}
\label{eq:projection_formula}
\bar \zsf  = \proj_{[\asf,\bsf]}\left(-\frac{1}{\mu}\bar \psf\right),
\quad
\proj_{[\asf,\bsf]}\left( \vsf \right):=  \min \left \{ \bsf, \max \left \{ \asf, \vsf \right \}  \right \}, 
\end{equation}
and refer the reader to \cite[section 3.6.3]{Tbook} for a proof of this result.


We begin by deriving regularity estimates in time. 

\begin{theorem}[time regularity estimates]
Let $s \in (0,1)$, $\fsf \in L^2(0,T;H^{-s}(\Omega))$, and $\usf_0 \in L^2(\Omega)$. If $\asf, \bsf \in H^1(0,T;L^2(\Omega))$, \EO{then
\begin{multline}
\| \partial_t \bar \zsf \|_{L^2(Q)} 
+
\| \partial_t \bar \psf \|_{L^2(Q)} 
 \lesssim 
 \Sigma(\usf_0,\fsf)
+ \| \usf_d \|_{L^2(Q)} 
\\
+  \| \asf \|_{H^1(0,T;L^2(\Omega))} + \| \bsf \|_{H^1(0,T;L^2(\Omega))}.
 \label{eq:regularity_time}
\end{multline}
The} hidden constant is independent of the problem data and the optimal variables.
\label{th:regularity_time}
\end{theorem}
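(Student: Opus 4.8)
The plan is to establish time regularity by exploiting the projection formula \eqref{eq:projection_formula} together with the parabolic smoothing estimate \eqref{eq:energy_u_2}, in a bootstrap-type argument that couples the state, adjoint, and control. The key observation is that $\bar\zsf = \proj_{[\asf,\bsf]}(-\bar\psf/\mu)$, and since the pointwise projection onto the interval $[\asf,\bsf]$ is a Lipschitz (nonexpansive) operation when $\asf,\bsf$ are themselves differentiable in time, a difference-quotient argument should transfer the time regularity of $\bar\psf$, $\asf$, and $\bsf$ onto $\bar\zsf$. Concretely, for the map $\vsf \mapsto \min\{\bsf,\max\{\asf,\vsf\}\}$ one has the elementary bound, in terms of time difference quotients $\delta_h w(t) := (w(t+h)-w(t))/h$, that $|\delta_h \bar\zsf| \lesssim |\delta_h \bar\psf| + |\delta_h \asf| + |\delta_h \bsf|$ pointwise a.e., which after integrating over $Q$ and passing to the limit $h\to 0$ yields
\[
\| \partial_t \bar\zsf \|_{L^2(Q)} \lesssim \| \partial_t \bar\psf \|_{L^2(Q)} + \| \asf \|_{H^1(0,T;L^2(\Omega))} + \| \bsf \|_{H^1(0,T;L^2(\Omega))}.
\]
This reduces the whole estimate to controlling $\| \partial_t \bar\psf \|_{L^2(Q)}$.

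To control $\| \partial_t \bar\psf \|_{L^2(Q)}$, I would apply the parabolic energy estimate \eqref{eq:energy_u_2} to the adjoint equation \eqref{fractional_adjoint}. The adjoint problem is a backward fractional heat equation with right-hand side $\ousf - \usf_d$ and terminal datum $\opsf(T)=0$; after the time reversal $t \mapsto T-t$ it is precisely of the form covered by \eqref{eq:energy_u_2}, so that
\[
\| \partial_t \opsf \|_{L^2(Q)} + \| \opsf \|_{L^\infty(0,T;H^s(\mathbb{R}^n))} \lesssim \Upsilon(0, \ousf - \usf_d) \lesssim \| \ousf \|_{L^2(Q)} + \| \usf_d \|_{L^2(Q)},
\]
where the zero terminal datum makes the initial-data contribution vanish. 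It then remains to bound $\| \ousf \|_{L^2(Q)}$ via the state estimate \eqref{eq:energy_u} of Theorem~\ref{thm:exis_uniq} applied to $\ousf = \mathbf{S}\ozsf$, giving $\| \ousf \|_{L^2(Q)} \lesssim \Sigma(\usf_0, \fsf + \ozsf)$. Here one uses that the admissible set $\Zad$ is bounded in $L^2(Q)$, with $\| \ozsf \|_{L^2(Q)}$ controlled in terms of $\| \asf \|_{L^2(Q)}$ and $\| \bsf \|_{L^2(Q)}$, both of which are dominated by the $H^1(0,T;L^2(\Omega))$ norms on the right-hand side; chaining these inequalities produces the stated bound \eqref{eq:regularity_time}.

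The main obstacle is the rigorous justification of the difference-quotient step, namely that $\bar\zsf$ actually admits a weak time derivative belonging to $L^2(Q)$ and that the pointwise Lipschitz estimate for the projection survives the passage to the limit. The subtlety is that the projection $\proj_{[\asf,\bsf]}$ has a time-dependent target interval, so one cannot simply cite nonexpansivity of a fixed projection; instead I would verify that $(t,x) \mapsto \min\{\bsf(x,t), \max\{\asf(x,t), -\bar\psf(x,t)/\mu\}\}$ is the composition of the Lipschitz map $(\alpha,\beta,\gamma) \mapsto \min\{\beta,\max\{\alpha,\gamma\}\}$ (with Lipschitz constant $1$ in each argument) with functions in $H^1(0,T;L^2(\Omega))$, and then appeal to the chain-rule/stability property of Sobolev functions under composition with globally Lipschitz maps (the Stampacchia-type result). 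This guarantees $\bar\zsf \in H^1(0,T;L^2(\Omega))$ together with the pointwise a.e.\ bound on $\partial_t \bar\zsf$, closing the argument.

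A secondary technical point worth flagging is that \eqref{eq:energy_u_2} nominally requires $\usf_0 \in H^s(\mathbb{R}^n)$, whereas here only $\usf_0 \in L^2(\Omega)$ is assumed; this is not an issue for the adjoint estimate because the adjoint has \emph{zero} terminal data, so the relevant instance of \eqref{eq:energy_u_2} is applied with vanishing initial datum and hence the $\Upsilon$ term collapses to the $L^2(Q)$ norm of the forcing $\ousf - \usf_d$ alone.
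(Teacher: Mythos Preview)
Your proposal is correct and follows essentially the same route as the paper: bound $\|\ousf\|_{L^2(Q)}$ via the basic energy estimate \eqref{eq:energy_u}, feed this into the improved estimate \eqref{eq:energy_u_2} for the (time-reversed) adjoint to control $\|\partial_t\opsf\|_{L^2(Q)}$, and then transfer the time regularity to $\ozsf$ through the projection formula via a Stampacchia-type composition result (the paper cites \cite[Theorem~A.1]{KSbook} for this step, which is precisely the Lipschitz-composition/difference-quotient argument you spell out). Your explicit remark that the $\ozsf$ contribution inside $\Sigma(\usf_0,\fsf+\ozsf)$ is absorbed by the $L^2(Q)$ bounds on $\asf,\bsf$ (hence by their $H^1(0,T;L^2(\Omega))$ norms) fills in a point the paper leaves implicit.
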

\begin{proof}
Since $\fsf + \bar \zsf \in L^2(0,T; H^{-s}(\Omega))$ and $\usf_0 \in L^2(\Omega)$, an application of the energy estimate \eqref{eq:energy_u} yields
\begin{equation}
\| \bar \usf \|_{L^{\infty}(0,T;L^2(\Omega))} + \|\bar \usf \|_{L^{2}(0,T;H^s(\mathbb{R}^n))} \lesssim \Sigma (\usf_0,\fsf + \bar \zsf).
\label{eq:energy_aux}
\end{equation}
On the other hand, since $\bar \usf - \usf_d \in L^2(0,T;L^2(\Omega))$, we apply the energy estimate \eqref{eq:energy_u_2} for the problem that $\bar \psf$ solves, i.e., problem \eqref{fractional_adjoint} with $\usf$ replaced by $\bar \usf$, to arrive at
\begin{equation}
\| \partial_t \bar \psf \|_{L^2(Q)} + \|\bar \psf \|_{L^{\infty}(0,T;H^s(\mathbb{R}^n))} \lesssim \| \bar \usf - \usf_d \|_{L^2(Q)},
\end{equation}
which, in view of \eqref{eq:energy_aux}, yields
\[
\| \partial_t \bar \psf \|_{L^2(Q)} + \|\bar \psf \|_{L^{\infty}(0,T;H^s(\mathbb{R}^n))} \lesssim
\| \usf_0 \|_{L^2(\Omega)} 
+ \| \fsf + \bar \zsf \|_{L^2(0,T;H^{-s}(\Omega))} + \| \usf_d \|_{L^2(Q)}.
\]
On the basis of this bound, we invoke the projection formula \eqref{eq:projection_formula} and \cite[Theorem A.1]{KSbook} to obtain $\bar \zsf \in H^1(0,T;L^2(\Omega))$ and \eqref{eq:regularity_time}. This concludes the proof.
\end{proof}


\EO{Before proceeding with the study of regularity estimate in space, we present the following instrumental result.

\begin{lemma}[nonlinear interpolation]
\label{le:nonlinear_interpolation}
Let $\mathcal{G}: L^2(\Omega) \rightarrow L^2(\Omega)$ be the nonlinear map defined as $\mathcal{G} \wsf = \max \{ \wsf,0\}$. If $s \in (0,1)$, then $\mathcal{G}$ maps $H^s(\Omega)$ into $H^s(\Omega)$ and
\[
\| \mathcal{G} \wsf \|_{H^s(\Omega)} \lesssim \| \wsf \|_{H^s(\Omega)} 
\]
for every $\wsf \in H^s(\Omega)$.
\end{lemma}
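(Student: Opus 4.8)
The plan is to exploit the elementary fact that the positive-part map is $1$-Lipschitz, combined with the Gagliardo--Slobodecki\u{\i} description of $H^s(\Omega)$ for $s \in (0,1)$. First I would recall that, since $\Omega$ is a bounded Lipschitz domain and $s \in (0,1)$, the norm of $H^s(\Omega)$ is equivalent to
\[
\| \wsf \|_{H^s(\Omega)}^2 = \| \wsf \|_{L^2(\Omega)}^2 + \iint_{\Omega \times \Omega} \frac{|\wsf(x) - \wsf(y)|^2}{|x-y|^{n+2s}} \diff x \diff y,
\]
the second term being the Gagliardo seminorm, which I denote $|\wsf|_{H^s(\Omega)}^2$.

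The key observation is the pointwise inequality $|\max\{a,0\} - \max\{b,0\}| \leq |a-b|$, valid for all $a,b \in \mathbb{R}$; equivalently, $t \mapsto \max\{t,0\}$ is $1$-Lipschitz. Applying this with $a = \wsf(x)$ and $b = \wsf(y)$ shows that the integrand defining $|\mathcal{G}\wsf|_{H^s(\Omega)}$ is dominated pointwise by the one defining $|\wsf|_{H^s(\Omega)}$, whence $|\mathcal{G}\wsf|_{H^s(\Omega)} \leq |\wsf|_{H^s(\Omega)}$. Since in addition $|\mathcal{G}\wsf| = |\max\{\wsf,0\}| \leq |\wsf|$ pointwise, we also obtain $\|\mathcal{G}\wsf\|_{L^2(\Omega)} \leq \|\wsf\|_{L^2(\Omega)}$. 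Adding these two bounds yields $\|\mathcal{G}\wsf\|_{H^s(\Omega)} \leq \|\wsf\|_{H^s(\Omega)}$ with constant one, the hidden constant in the statement arising only from the norm equivalence recalled above.

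An alternative route, more in line with the name of the lemma, proceeds by \emph{nonlinear interpolation}: one checks that $\mathcal{G}$ is Lipschitz from $L^2(\Omega)$ into itself and bounded from $H^1(\Omega)$ into itself --- the latter via Stampacchia's theorem, which gives $\max\{\wsf,0\} \in H^1(\Omega)$ with $\nabla \max\{\wsf,0\} = \chi_{\{\wsf > 0\}} \nabla \wsf$ and hence $|\nabla \max\{\wsf,0\}| \leq |\nabla \wsf|$ almost everywhere --- and then invokes a nonlinear interpolation theorem (for instance Tartar's) together with the identification $H^s(\Omega) = [L^2(\Omega), H^1(\Omega)]_s$ for $s \in (0,1)$ to transfer boundedness to the intermediate space.

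I expect the only delicate point to be bookkeeping with the norm equivalences: the Gagliardo route relies on the equivalence between the intrinsic $H^s(\Omega)$ norm and the integral seminorm norm on Lipschitz domains, while the interpolation route relies on verifying the precise hypotheses (Lipschitz continuity at the endpoint spaces) required by the nonlinear interpolation theorem. The pointwise Lipschitz estimate for $\max\{\cdot,0\}$ itself is entirely elementary and carries all the real content; for this reason I would present the direct Gagliardo argument as the primary proof.
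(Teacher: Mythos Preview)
Your proposal is correct, and your \emph{alternative} route---nonlinear interpolation via boundedness in $H^1(\Omega)$ (Stampacchia), the Lipschitz property in $L^2(\Omega)$, and the identification $H^s(\Omega)=[L^2(\Omega),H^1(\Omega)]_s$---is precisely the argument the paper gives, citing Kinderlehrer--Stampacchia for the $H^1$ step and Tartar's nonlinear interpolation lemma for the conclusion.

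Your \emph{primary} route, however, is genuinely different and arguably better suited here: the direct Gagliardo argument bypasses interpolation theory entirely, uses nothing beyond the pointwise inequality $|\max\{a,0\}-\max\{b,0\}|\le|a-b|$, and delivers the estimate with constant one (up to the norm-equivalence constant on Lipschitz domains). The paper's interpolation approach is more abstract and yields only an unspecified constant, but it has the advantage of applying verbatim to other Lipschitz nonlinearities and to interpolation scales where a convenient integral seminorm may not be available. Since the lemma is only used downstream to push regularity of $\bar\psf$ through the projection formula, either argument is entirely adequate; the Gagliardo proof is simply the more self-contained choice.
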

\begin{proof}
Notice that \cite[Theorem A.1]{KSbook} immediately yields the boundedness of $\mathcal{G}$ in $H^1(\Omega)$. On the other hand, we have the following Lipschitz property of $\mathcal{G}$ in $L^2(\Omega)$:
\[
\| \mathcal{G} \wsf_1 - \mathcal{G} \wsf_2 \|_{L^2(\Omega)} \leq \| \wsf_1 - \wsf_2 \|_{L^2(\Omega)} \quad \forall \wsf_1, \wsf_2 \in L^2(\Omega).
\]
Since $H^s(\Omega) = [L^2(\Omega),H^1(\Omega)]_s$, we apply \cite[Lemma 28.1]{Tartar} to conclude.
\end{proof}

To present regularity estimates in space, we define
\begin{multline}
\label{eq:C}
\mathfrak{C} := \|\usf_0\|_{\tilde H^{\beta}(\mathbb{R}^n)} + \| \fsf \|_{L^{2}(0,T;H^{\beta}(\Omega))} + \| \usf_d \|_{L^{2}(0,T;H^{\beta}(\Omega))} 
\\
+ \| \asf \|_{L^{2}(0,T;H^1(\Omega))} + \| \bsf \|_{L^{2}(0,T;H^1(\Omega))},
\end{multline}
where $\beta = 1 - \epsilon$ and $\epsilon > 0$ is arbitrarily small.}

\begin{theorem}[space regularity estimates: $s \in (0,1)$]
Let $s \in (0,1)$, $\Omega$ be a domain such that $\partial \Omega \in C^{\infty}$, and $\asf, \bsf \in L^2(0,T;H^1(\Omega))$. \EO{If $\usf_0 \in \tilde H^{\beta}(\Omega)$, $\fsf \in L^{2}(0,T;H^{\beta}(\Omega))$, and  $\usf_d \in L^{2}(0,T;H^{\beta}(\Omega))$, for every $\beta < 1$, then
\begin{align}
\label{eq:regularity_space_control_bigger_05}
 \| \bar \zsf \|_{L^2(0,T;H^1(\Omega))} +  \| \bar \psf \|_{L^2(0,T;H^{s+\frac12 - \epsilon}(\Omega))} 
+
\| \bar \usf \|_{L^2(0,T;H^{s+\frac12 - \epsilon}(\Omega))} 
 & \lesssim \mathfrak{C},
 \quad s > \frac{1}{2},
  \\
 \label{eq:regularity_space_control_05}
 \| \bar \zsf \|_{L^2(0,T;H^{1-\epsilon}(\Omega))} +  \| \bar \psf \|_{L^2(0,T;H^{1-\epsilon}(\Omega))} + \| \bar \usf \|_{L^2(0,T;H^{1-\epsilon}(\Omega))} & \lesssim \mathfrak{C}, \quad s = \frac{1}{2},
 \end{align}
 and
 \begin{multline}
 \| \bar \zsf \|_{L^2(0,T;H^{s+\frac12-\epsilon}(\Omega))} +  \| \bar \psf \|_{L^2(0,T;H^{s+\frac12-\epsilon}(\Omega))} 
 \\
 \label{eq:regularity_space_control_smaller_05}
 + \| \bar \usf \|_{L^2(0,T;H^{s+\frac12-\epsilon}(\Omega))} 
 \lesssim \mathfrak{C}, \quad s < \frac{1}{2},
\end{multline}
where} $\epsilon>0$ is arbitrarily small. In all the estimates the hidden constant is independent of the optimal variables and the problem data.
\label{thm:regularity_space}
\end{theorem}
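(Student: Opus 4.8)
The plan is to close the optimality system---the state equation \eqref{eq:weak_formulation}, the adjoint equation \eqref{fractional_adjoint}, and the projection formula \eqref{eq:projection_formula}---by combining the parabolic regularity result of Theorem~\ref{th:regularity_estimate} with a bootstrapping argument. The guiding principle is that each application of Theorem~\ref{th:regularity_estimate} to the state or adjoint equation lifts the spatial Sobolev exponent of the data by $2s$, but never beyond the elliptic ceiling $s+\frac12-\epsilon$; the projection formula \eqref{eq:projection_formula} then transfers regularity from $\bar\psf$ to $\bar\zsf$ without loss, up to the threshold $\min\{1,\cdot\}$ dictated by $\asf,\bsf \in L^2(0,T;H^1(\Omega))$ and by the nonlinear map $\mathcal{G}$. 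I would treat the backward adjoint equation by the time reversal $t \mapsto T-t$, which turns \eqref{fractional_adjoint} into a forward problem with zero initial datum, so that Theorem~\ref{th:regularity_estimate} applies verbatim. Throughout, the a priori bound \eqref{eq:regularity_estimate} and the stated mapping properties of the projection chain together to control every quantity by $\mathfrak{C}$, using that $\bar\zsf \in \Zad$ already yields $\|\bar\zsf\|_{L^2(Q)} \lesssim \|\asf\|_{L^2(0,T;H^1(\Omega))}+\|\bsf\|_{L^2(0,T;H^1(\Omega))}$.

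I would first dispose of the cases $s \ge \frac12$, where a single forward pass suffices. Since $\frac12 - s \le 0$, I may apply Theorem~\ref{th:regularity_estimate} to the state equation with the negative exponent $r = \frac12 - s - \epsilon$; because $\fsf+\bar\zsf \in L^2(Q) \hookrightarrow L^2(0,T;H^r(\Omega))$ and $\usf_0 \in \tilde H^{s+r}(\Omega)$ with $s+r<\frac12$, this immediately gives $\bar\usf \in L^2(0,T;H^{s+\frac12-\epsilon}(\Omega))$, the ceiling. Feeding $\bar\usf-\usf_d$ into the reversed adjoint equation with the same negative exponent yields $\bar\psf \in L^2(0,T;H^{s+\frac12-\epsilon}(\Omega))$. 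For $s>\frac12$ one has $s+\frac12-\epsilon>1$, hence $\bar\psf \in L^2(0,T;H^1(\Omega))$, and the $H^1$-stability of the projection (\cite[Theorem A.1]{KSbook}) gives $\bar\zsf \in L^2(0,T;H^1(\Omega))$, which is \eqref{eq:regularity_space_control_bigger_05}. For $s=\frac12$ the ceiling is $1-\epsilon<1$, and Lemma~\ref{le:nonlinear_interpolation} shows that the projection preserves $H^{1-\epsilon}(\Omega)$, yielding \eqref{eq:regularity_space_control_05}.

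The substantive case is $s<\frac12$, where the ceiling exponent $s+\frac12-\epsilon<1$ forces the right-hand side $\fsf+\bar\zsf$ to already possess positive regularity $r\approx\frac12-s$, information that $\bar\zsf \in L^2(Q)$ does not provide; hence I would run a bootstrap. Writing $a_k$ for the current spatial exponent of $\bar\zsf$ (with $a_0=0$), one cycle consists of: (i) applying Theorem~\ref{th:regularity_estimate} to the state equation with $r=\min\{a_k,\tfrac12-s-\epsilon\}$, producing $\bar\usf$ with exponent $\min\{2s+a_k,\,s+\frac12-\epsilon\}$; (ii) applying it to the reversed adjoint equation with that exponent (capped by the regularity of $\usf_d$, which we may take above $s+\frac12-\epsilon$ since $\beta=1-\epsilon>s+\frac12-\epsilon$ for $s<\frac12$), producing $\bar\psf$; and (iii) invoking \eqref{eq:projection_formula} together with Lemma~\ref{le:nonlinear_interpolation}---legitimate because the target exponent stays in $(0,1)$---to set $a_{k+1}$ equal to the exponent of $\bar\psf$. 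Below the ceiling each cycle increases the exponent by $4s>0$, so after finitely many cycles $\bar\zsf$ attains $s+\frac12-\epsilon$; a final state and adjoint solve (using that $s+\frac12-\epsilon>\frac12-s$) then place $\bar\usf$ and $\bar\psf$ at the same ceiling, giving \eqref{eq:regularity_space_control_smaller_05}.

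The main obstacle is the bookkeeping of this bootstrap in the subcritical range $s<\frac12$: at each step one must verify that the exponent constraint $r<\frac12-s$ of Theorem~\ref{th:regularity_estimate} is respected, that the required initial datum lies in $\tilde H^{s+r}(\Omega)$ with $s+r<\frac12<1$ (guaranteed by $\usf_0 \in \tilde H^{\beta}(\Omega)$ for every $\beta<1$), that the projection step never leaves the interval $(0,1)$ in which Lemma~\ref{le:nonlinear_interpolation} operates, and that the finitely many chained a priori estimates accumulate into a single bound by $\mathfrak{C}$ with constants depending only on $s$, $\epsilon$, $\Omega$, and $n$. A minor but necessary preliminary is the identity $\proj_{[\asf,\bsf]}(\vsf)=\bsf-\mathcal{G}\bigl(\bsf-\asf-\mathcal{G}(\vsf-\asf)\bigr)$, which exhibits the projection as nested maxima so that Lemma~\ref{le:nonlinear_interpolation} can be applied to each factor.
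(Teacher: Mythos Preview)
Your proposal is correct and follows essentially the same route as the paper: cycle through the optimality system, applying the parabolic regularity estimate of Theorem~\ref{th:regularity_estimate} to the state and (time-reversed) adjoint equations to gain $2s$ of spatial smoothness at each application up to the ceiling $s+\tfrac12-\epsilon$, and transfer the result to $\bar\zsf$ through the projection formula via Lemma~\ref{le:nonlinear_interpolation} (or the $H^1$ bound when $s>\tfrac12$). The paper organizes the argument slightly differently---it first uses the basic energy estimate \eqref{eq:energy_u} to place $\bar\usf$ in $L^2(0,T;H^s)$ and then applies Theorem~\ref{th:regularity_estimate} to the adjoint with $r=s$, whereas you hit the state equation directly with $r=\tfrac12-s-\epsilon$ for $s\ge\tfrac12$---and it unfolds the low-$s$ bootstrap through an explicit dyadic case split $[\tfrac14,\tfrac12)$, $[\tfrac18,\tfrac14)$, $[\tfrac1{12},\tfrac18)$, \dots\ rather than your abstract recursion $a_{k+1}=\min\{4s+a_k,\,s+\tfrac12-\epsilon\}$; but the content and the obstacles are identical.
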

\begin{proof}
\EO{Let $s \in (0,1)$. Since the right-hand side of the state equation \eqref{eq:weak_formulation} is such that $\fsf + \bar \zsf \in L^2(Q)$, we can apply the energy estimate \eqref{eq:energy_u} to obtain that $\ousf \in L^{2}(0,T;H^s(\mathbb{R}^n))$. By assumption $\usf_d \in L^2(0,T;H^{\beta}(\Omega))$ for every $\beta < 1$. Consequently, $\ousf - \usf_d \in L^{2}(0,T;H^{s}(\Omega))$. Estimate \eqref{eq:regularity_estimate}, with $r=s$, thus yields
\begin{equation}
\label{eq:regularity_space_adjoint_bigger_05}
 \| \opsf \|_{L^2(0,T;H^{s+\iota}(\Omega))}  \lesssim \| \ousf - \usf_d \|_{L^{2}(0,T;H^s(\Omega))},
\end{equation}
where $\iota = \min \{2s,1/2-\epsilon\}$ and $\epsilon>0$ is arbitrarily small. 

Similarly, since $s<1$, we have that $\usf_0 \in \tilde H^s(\Omega)$. By assumption, we also have that $\fsf \in L^{2}(Q)$ and $\asf, \bsf \in L^{2}(Q)$. We are thus in position to apply Theorem \ref{th:regularity_estimate}, with $r=0$, to obtain that $\bar \usf \in L^2(0,T;H^{s+\nu}(\Omega))$, where $\nu = \min \{ s,1/2 - \epsilon\}$, and 
\begin{equation}
\label{eq:regularity_space_state_bigger_05_aux}
 \| \ousf \|_{L^2(0,T;H^{s+\nu}(\Omega))}  \lesssim \| \usf_0 \|_{H^s(\mathbb{R}^n)}+ \| \fsf + \ozsf \|_{L^{2}(0,T;L^2(\Omega))}.
\end{equation} 

We now consider four cases.

\noindent 
\boxed{1} $s \in (\tfrac{1}{2},1)$: Since $s > 1/2$, \eqref{eq:regularity_space_adjoint_bigger_05} immediately yields $\opsf \in L^2(0,T;H^{s+1/2-\epsilon}(\Omega))$ for $\epsilon >0$ arbitrarily small. Thus, in view of the projection formula \eqref{eq:projection_formula}, we obtain
\begin{multline*}
  \| \ozsf \|_{L^2(0,T;H^{1}(\Omega))} \lesssim \| \ousf - \usf_d \|_{L^{2}(0,T;H^{s}(\Omega))} + \| \asf \|_{L^2(0,T;H^1(\Omega))} + \| \bsf \|_{L^2(0,T;H^1(\Omega))}
  \\
  \lesssim \Sigma(\usf_0, \fsf + \ozsf) + \| \usf_d \|_{L^{2}(0,T;H^s(\Omega))} + \| \asf \|_{L^2(0,T;H^1(\Omega))} + \| \bsf \|_{L^2(0,T;H^1(\Omega))} \lesssim \mathfrak{C}.
\end{multline*}
Notice that we have also used \eqref{eq:energy_u}. On the other hand, \eqref{eq:regularity_space_state_bigger_05_aux} immediately implies that $\bar \usf \in L^2(0,T; H^{s+1/2-\epsilon}(\Omega))$, for $\epsilon >0$ arbitrarily small, together with the bound
\begin{multline*}
  \| \ousf \|_{L^2(0,T;H^{s+1/2-\epsilon}(\Omega))} \lesssim \| \usf_0 \|_{H^s(\mathbb{R}^n)}+ \| \fsf \|_{L^{2}(0,T;L^2(\Omega))}
  \\
  + \| \asf \|_{L^2(0,T;L^2(\Omega))} + \| \bsf \|_{L^2(0,T;L^2(\Omega))} \lesssim \mathfrak{C}.
\end{multline*}
A collection of the derived estimates yields \eqref{eq:regularity_space_control_bigger_05}.

\noindent 
\boxed{2} $s= \tfrac{1}{2}$:  The proof of the estimate \eqref{eq:regularity_space_control_05} follows similar arguments. For brevity, we skip the details.

\noindent 
\linebreak
\boxed{3} $s \in [\tfrac{1}{4},\tfrac{1}{2})$: Estimate \eqref{eq:regularity_space_adjoint_bigger_05} yields $\opsf \in L^2(0,T;H^{s + 1/2 - \epsilon}(\Omega))$, for $\epsilon >0$ arbitrarily small, because $\iota = \min \{ 2s, 1/2 - \epsilon \} = 1/2 - \epsilon$. We can thus apply the nonlinear argument of Lemma \ref{le:nonlinear_interpolation} to arrive at the bound
\begin{multline}
  \| \ozsf \|_{L^2(0,T;H^{s + 1/2 - \epsilon}(\Omega))} \lesssim 
    \|\opsf\|_{L^2(0,T;H^{s + 1/2 - \epsilon}(\Omega))}  + \| \asf \|_{L^2(0,T;H^1(\Omega))} + \| \bsf \|_{L^2(0,T;H^1(\Omega))}
  \\
    \lesssim \| \ousf - \usf_d \|_{L^2(0,T;H^{s}(\Omega))} 
  + \| \asf \|_{L^2(0,T;H^1(\Omega))} + \| \bsf \|_{L^2(0,T;H^1(\Omega))} \lesssim \mathfrak{C}.
\end{multline}
Since $s < 1/2$, by assumption we have that $\usf_0 \in \tilde H^{2s}(\Omega)$. Notice that we also have that $\fsf + \ozsf \in L^{2}(0,T;H^{s}(\Omega))$ because $s + 1/2 - \epsilon > s$ for $\epsilon>0$ arbitrarily small. Invoke thus the regularity estimate \eqref{eq:regularity_estimate}, with $r=s$, to arrive at
\begin{equation}
  \| \ousf \|_{L^2(0,T;H^{s + \vartheta}(\Omega))} \lesssim  \|\usf_0 \|_{H^{2s}(\mathbb{R}^n)} + \| \fsf +\ozsf\|_{L^{2}(0,T;H^{s}(\Omega))} \lesssim \mathfrak{C},
  \label{eq:bound_u_14_12}
\end{equation}
where $\vartheta = \min \{ 2s, 1/2 - \epsilon \}$ and $\epsilon >0$. Since $s \geq 1/4$, we have that $\vartheta = 1/2 - \epsilon$. Consequently, $\bar \usf \in L^2(0,T; H^{s+1/2 - \epsilon}(\Omega))$, for every $\epsilon>0$, with the estimate \eqref{eq:bound_u_14_12}.


\noindent 
\linebreak
\boxed{4} $s\in (0,\tfrac{1}{4})$: We proceed on the basis of a bootstrap argument. Since $s<1/4$, we have that $\iota = 2s$ and, as a consequence, $\opsf \in L^2(0,T;H^{3s}(\Omega))$. We thus invoke the nonlinear argument of Lemma \ref{le:nonlinear_interpolation} to conclude that $\ozsf \in L^2(0,T;H^{3s}(\Omega))$ with
\begin{multline}
  \| \ozsf \|_{L^2(0,T;H^{3s}(\Omega))} \lesssim \| \ousf \|_{L^{2}(0,T;H^{s}(\Omega))} + \|  \usf_d \|_{L^{2}(0,T;H^{s}(\Omega))}
  \\
+
  \| \asf \|_{L^2(0,T;H^1(\Omega))} + \| \bsf \|_{L^2(0,T;H^1(\Omega))} \lesssim \mathfrak{C}.
\end{multline}
Now, since $\fsf + \ozsf \in L^2(0,T;H^{3s}(\Omega))$ and $\usf_0 \in \tilde H^{4s}(\Omega)$, we apply Theorem \ref{th:regularity_estimate}, with $r =3s$, to conclude that
\begin{equation}
  \| \ousf \|_{L^2(0,T;H^{s + \upsilon}(\Omega))} \lesssim \|\usf_0 \|_{H^{4s}(\mathbb{R}^n)} + \| \fsf +\ozsf\|_{L^{2}(0,T;H^{3s}(\Omega))},
\end{equation}
where $\upsilon = \min \{ 4s, 1/2 - \epsilon \}$ and $\epsilon > 0$ is arbitrarily small.

\noindent 
\linebreak
\boxed{4.1} $s\in [\tfrac{1}{8},\tfrac{1}{4})$: We immediately conclude that $\ousf \in L^2(0,T;H^{s+1/2-\epsilon}(\Omega))$, for $\epsilon >0$ arbitrarily small.  Invoke Theorem \ref{th:regularity_estimate}, with $r = s+1/2-\epsilon$, to conclude that
\[
 \| \opsf \|_{L^2(0,T;H^{s+\theta}(\Omega))}  \lesssim \| \ousf - \usf_d \|_{L^{2}(0,T;H^{s+1/2-\epsilon}(\Omega))} \lesssim \mathfrak{C},
\]
where $\vartheta = \min \{2s+1/2 - \epsilon, 1/2 - \epsilon \} = 1/2 - \epsilon$. This, in view of Lemma \ref{le:nonlinear_interpolation}, yields $\ozsf \in L^2(0,T;H^{s+1/2-\epsilon}(\Omega))$, for $\epsilon > 0$ arbitrarily small, with a similar estimate.

\noindent 
\linebreak
\boxed{4.2} $s\in (0,\tfrac{1}{8})$: Here, $\upsilon = 4s$. Thus $\ousf \in L^2(0,T; H^{5s}(\Omega))$. Invoke Theorem \ref{th:regularity_estimate}, with $r = 5s$, to conclude that
\[
 \| \opsf \|_{L^2(0,T;H^{s+\sigma}(\Omega))}  \lesssim \| \ousf - \usf_d \|_{L^{2}(0,T;H^{5s}(\Omega))} \lesssim \mathfrak{C},
\]
where $\sigma= \min \{ 6s, 1/2 - \epsilon \}$, for $\epsilon > 0$ arbitrarily small.

\noindent 
\linebreak
\boxed{4.2.1} $s\in [\tfrac{1}{12},\tfrac{1}{8})$: We have that $\opsf, \ozsf  \in L^2(0,T; H^{s+1/2 - \epsilon}(\Omega))$, for $\epsilon>0$ arbitrarily small. Invoke Theorem \ref{th:regularity_estimate}, with $r = s + 1/2 - \epsilon$, to arrive at
\[
 \| \ousf \|_{L^2(0,T;H^{s+\theta}(\Omega))}  \lesssim \| \fsf + \ozsf \|_{L^{2}(0,T;H^{s+1/2 -\epsilon}(\Omega))} \lesssim \mathfrak{C},
\]
where $\vartheta = \min \{ 2s + 1/2 - \epsilon, 1/2 - \epsilon \} = 1/2 - \epsilon$, i.e., $\ousf \in L^2(0,T;H^{s-1/2 - \epsilon}(\Omega))$, for $\epsilon > 0$ arbitrarily small.

\noindent 
\linebreak
\boxed{4.2.2} $s\in (0,\tfrac{1}{12})$: Here, $\sigma = 6s$ and thus $\opsf \in L^2(0,T;H^{7s}(\Omega))$. Argue as before to conclude that $\ozsf \in L^2(0,T;H^{7s}(\Omega))$ and $\ousf \in L^2(0,T;H^{s + \vartheta}(\Omega))$, where $\vartheta = \min\{ 8s, 1/2 - \epsilon\}$, for $\epsilon >0$ arbitrarily small.

From this procedure we note that, at every step, there is a regularity gain. Consequently, after a finite number of steps, which is proportional to $s^{-1}$, we can conclude that estimate \eqref{eq:regularity_space_control_smaller_05} holds. This concludes the proof.}
\end{proof}

\section{Approximation of the state equation}
 \label{sec:a_priori_state}
 
Let us now propose and analyze a fully discrete scheme to solve the state equation \eqref{eq:weak_formulation}. The space discretization hinges on a standard finite element space of continuous and piecewise linear functions. The discretization in time uses the backward Euler scheme. We derive stability estimates and an a priori error estimate in $L^2(Q)$ 
 
 \subsection{Time discretization}
 \label{sec:time_discretization}
 Let $\K \in \mathbb{N}$ be the number of time steps. Define the uniform time step $\tau = T/\K > 0$ and set $t_k = k \tau$ for $k=0,\dots,\K$. We denote the time partition by $\mathcal{T}:= \{ t_k \}_{k=0}^{\K}$. Given a function $\phi \in C ( [0,T], {\Xcal} )$, we denote $\phi^k = \phi(t_k) \in \Xcal$ and $\phi^{\tau}= \{ \phi^k\}_{k=0}^{\K} \subset \Xcal$. For any sequence $\phi^{\tau} \subset \Xcal$, we define the piecewise linear interpolant ${\hat \phi}^{\tau} \in C([0,T];\Xcal)$ by
 \begin{equation}
 \label{eq:p_l_interpolant}
  {\hat \phi}^{\tau}(t):= \frac{t-t_k}{\tau}\phi^{k+1} +  \frac{t_{k+1}-t}{\tau}\phi^{k}, \quad t \in [t_k,t_{k+1}], \quad k= 0, \dots, \mathcal{K} - 1.
 \end{equation}
We also define, for any sequence $\phi^{\tau} \subset \Xcal$, the first order differences operators
 \begin{equation}
 \label{1_discrete}
   \mathfrak{d} \phi^{k+1} = \tau^{-1} (\phi^{k+1} - \phi^{k}), \quad k = 0,\ldots, \K -1,
 \end{equation}
 and
\begin{equation}
\label{1_discrete_forward}
\bar{\mathfrak{d}} \phi^k = -\tau^{-1} \left( \phi^{k+1} - \phi^k \right), \quad k = \K -1, \ldots, 0,
\end{equation}
and the norms $\| \phi^{\tau} \|_{\ell^{\infty}(\Xcal)} = \max \{ \| \phi^k\|_{\Xcal} :  k = 0, \dots, \K \}$ and  
 \[\| \phi^{\tau} \|_{\ell^p(\Xcal)} = \left( \sum_{k=1}^\K \tau \| \phi^k\|_{\Xcal}^p \right)^{\frac{1}{p}}, \quad p \in [1,\infty),
 \]
\EO{Finally, we define $| \phi^{\tau} |_{\ell^{\infty}(\Xcal)} = \max \{ | \phi^k|_{\Xcal} :  k = 0, \dots, \K \}$;  $| \phi^{\tau} |_{\ell^p(\Xcal)}$ is defined accordingly.}

\begin{remark}[identification with a piecewise constant function]\rm
\label{rk:indentification}
We note that any sequence $\phi^{\tau} \subset \Xcal$ can be equivalently understood as a piecewise
constant, in time, function $\phi \in L^{\infty}(0,T;\Xcal)$. \EO{Indeed, let us consider}
\[
\phi(t) = \phi^k \quad \forall t \in (t_{k-1},t_k], \quad \ k = 1,\dots, \K.
\]
In what follows we use this identification repeatedly and without explicit mention.
\end{remark}
 
\subsection{Space discretization}
\label{sec:space_discretization}

Let $\T = \{ K \}$ be a conforming partition of $\overline \Omega$ into simplices $K$ with size $h_K = \diam(K)$. Set $h_{\T} = \max_{K \in \T} h_K$. We denote by $\mathbb{T}$ the collection of conforming and shape regular meshes that are refinements of an initial mesh $\T_0$. By shape regular we mean that there exists a constant $\sigma > 1$ such that $\max \{ \sigma_K: K \in \T \} \leq \sigma$ for all $\T \in \mathbb{T}$. Here, $\sigma_K = h_K / \rho_K$ is the shape coefficient of $K$; $\rho_K$ denotes the diameter of the largest ball that can be inscribed in $K$ \cite{MR1930132}.

Given a mesh $\T \in \mathbb{T}$, we define the finite element space of continuous piecewise polynomials of degree one as
\begin{equation}
\V(\T) = \left\{ V  \in C^0( \overline\Omega): \EO{V|_{K}} \in \mathbb{P}_1(K) \ \forall K \in \T, \ V = 0 \textrm{ on } \partial \Omega \right\}.
\label{eq:defFESpace}
\end{equation}
Note that discrete functions are trivially extended by zero to $\Omega^c$ and that we enforce a classical homogeneous Dirichlet boundary condition at the degrees of freedom that are located at the boundary of \(\Omega\).

\subsection{Elliptic projection}
\label{sec:elliptic_projection}
In this section, we define an elliptic projector that will be of fundamental importance to derive error estimates. This projector $G_{\T}: \tilde H^s(\Omega) \rightarrow \V(\T)$ is such that, for $w \in \tilde H^s(\Omega)$, it is given by 
\begin{equation}
\label{eq:elliptic_projection}
G_{\T}w \in \V(\T): \quad \mathcal{A}(G_{\T}w,W) = \mathcal{A}(w,W) \quad \forall W \in \V(\T).
\end{equation}

The operator $G_{\T}$ satisfies the following stability and approximation properties.

\begin{proposition}[elliptic projector]
Let $s \in (0,1)$. The elliptic projector $G_{\T}$ is stable in $\tilde H^s(\Omega)$, i.e.,
\begin{equation}
\| G_{\T} w \|_s \lesssim \| w \|_s \quad \forall w \in \tilde H^s(\Omega).
\end{equation}
If, in addition, 
$w \in H^{\kappa}(\Omega)$, for $\kappa \geq s$, then $G_{\T}$ has the approximation property
\begin{equation}
\label{eq:error_estimate_for_G}
 \| w -  G_{\T} w \|_{s} \lesssim h_{\T}^{\kappa-s} | w |_{H^{\kappa}(\Omega)}.
\end{equation}
In both estimates the hidden constants are independent of $w$ and $h_{\T}$.
\end{proposition}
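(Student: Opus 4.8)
The plan is to exploit that $\mathcal{A}(\cdot,\cdot)$ is a symmetric, continuous, and coercive bilinear form on $\tilde H^s(\Omega)$ and that $\|\cdot\|_s$ is precisely the norm it induces; thus $G_{\T}$ is nothing but the $\mathcal{A}$-orthogonal projection onto the finite-dimensional subspace $\V(\T) \subset \tilde H^s(\Omega)$. The stability estimate is then immediate. Setting $W = G_{\T}w$ in the defining relation \eqref{eq:elliptic_projection} gives $\|G_{\T}w\|_s^2 = \mathcal{A}(G_{\T}w,G_{\T}w) = \mathcal{A}(w,G_{\T}w)$, and, since $\mathcal{A}$ is an inner product, the Cauchy--Schwarz inequality yields $\mathcal{A}(w,G_{\T}w) \le \|w\|_s \|G_{\T}w\|_s$. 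Dividing by $\|G_{\T}w\|_s$ (the bound being trivial when it vanishes) produces $\|G_{\T}w\|_s \le \|w\|_s$, i.e.,\ stability with constant one.

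For the approximation property I would first record Galerkin orthogonality, which is a direct restatement of \eqref{eq:elliptic_projection}: $\mathcal{A}(w - G_{\T}w, W) = 0$ for every $W \in \V(\T)$. Because $G_{\T}w$ is the orthogonal projection in the $\|\cdot\|_s$-geometry, this immediately delivers the best-approximation identity
\[
\|w - G_{\T}w\|_s = \min_{W \in \V(\T)} \|w - W\|_s.
\]
I would then bound the minimum by inserting a concrete quasi-interpolant $\Pi_{\T}w \in \V(\T)$; a Scott--Zhang/Cl\'ement type operator is the right choice, since for $\kappa$ close to $s$ one cannot assume that pointwise values of $w$ are well defined. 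Recalling that $\|v\|_s = \sqrt{\mathfrak{C}(n,s)}\,|v|_{H^s(\mathbb{R}^n)}$, the claim \eqref{eq:error_estimate_for_G} is thereby reduced to the fractional interpolation estimate
\[
|w - \Pi_{\T}w|_{H^s(\mathbb{R}^n)} \lesssim h_{\T}^{\kappa - s}\,|w|_{H^{\kappa}(\Omega)}, \qquad s \le \kappa,
\]
understood for the zero extension of $w$ to $\mathbb{R}^n$.

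The hard part will be establishing this last interpolation bound, since the seminorm $|\cdot|_{H^s(\mathbb{R}^n)}$ is nonlocal: it is a double integral over $\mathbb{R}^n \times \mathbb{R}^n$ and therefore does not split into a sum of purely elementwise contributions, and it additionally sees the interaction between $\Omega$ and its exterior through the zero extension. To handle this I would invoke a localization of the Gagliardo seminorm (a Faermann-type decomposition) that controls $|\cdot|_{H^s(\mathbb{R}^n)}$ by a sum of local fractional seminorms over stars of elements plus a nonlocal remainder, and then combine the standard local polynomial approximation estimates on each simplex with the shape regularity of $\mathbb{T}$. This reduction, together with the fractional interpolation results available in the literature for the integral fractional Laplacian, yields the stated $O(h_{\T}^{\kappa-s})$ rate and completes the proof.
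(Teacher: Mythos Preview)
Your proposal is correct and is exactly the ``standard argument'' the paper alludes to: the paper's own proof consists solely of the sentence ``The proof follows standard arguments. For brevity, we skip the details.'' Your outline---stability with constant one from the $\mathcal{A}$-orthogonal projection property, best approximation via Galerkin orthogonality, and then a Scott--Zhang quasi-interpolant (which the paper itself invokes in the very next proposition) together with a localized fractional interpolation estimate---is precisely what is being skipped.
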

\begin{proof}
\EO{The proof follows standard arguments. For brevity, we skip the details.}
%
\end{proof}

\begin{proposition}[$L^2(\Omega)$-error estimate: elliptic projector]
Let $s \in (0,1)$ and $\Omega$ be a domain such that $\partial \Omega \in C^{\infty}$. If $w \in H^{\kappa}(\Omega)$, for $\kappa \geq s$, then we have
\begin{equation}
\label{eq:L2_error_estimate_for_G}
 \| w -  G_{\T} w \|_{L^2(\Omega)} \lesssim h_{\T}^{\kappa + \vartheta - s} |w|_{H^{\kappa}(\Omega)},
\end{equation}
where \EO{$\vartheta = \min \{ s , 1/2 - \epsilon\}$}. The hidden constant is independent of $w$ and $h_{\T}$.
\label{pro:L2_elliptic}
\end{proposition}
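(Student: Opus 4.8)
The plan is to use a standard Aubin--Nitsche duality argument, leveraging the energy-norm estimate \eqref{eq:error_estimate_for_G} together with the elliptic regularity of Proposition~\ref{pro:state_regularity_smooth}. Write $e := w - G_{\T} w$ for the projection error. The starting point is to introduce the dual problem: let $\varphi \in \tilde H^s(\Omega)$ solve
\[
\mathcal{A}(v,\varphi) = (e,v)_{L^2(\Omega)} \quad \forall v \in \tilde H^s(\Omega),
\]
which is well posed by the Lax--Milgram lemma. Setting $v = e$ and using the symmetry of $\mathcal{A}$ yields $\|e\|_{L^2(\Omega)}^2 = \mathcal{A}(e,\varphi)$.

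Next I would invoke Galerkin orthogonality. By the definition \eqref{eq:elliptic_projection} of $G_{\T}$, we have $\mathcal{A}(e,W) = 0$ for every $W \in \V(\T)$. Consequently, for any $\Phi \in \V(\T)$,
\[
\|e\|_{L^2(\Omega)}^2 = \mathcal{A}(e,\varphi - \Phi) \leq \|e\|_s \, \|\varphi - \Phi\|_s,
\]
where the Cauchy--Schwarz inequality in the energy inner product has been used. The natural choice is $\Phi = G_{\T}\varphi$, so that the second factor becomes the energy-norm approximation error for the dual solution $\varphi$.

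The key step is to quantify the regularity of $\varphi$. Since the data of the dual problem is $e \in L^2(\Omega) = H^0(\Omega)$, Proposition~\ref{pro:state_regularity_smooth} applies with $r = 0$, giving $\varphi \in H^{s+\vartheta}(\Omega)$ with $\vartheta = \min\{s, 1/2 - \epsilon\}$ and $\|\varphi\|_{H^{s+\vartheta}(\Omega)} \lesssim \|e\|_{L^2(\Omega)}$. Applying \eqref{eq:error_estimate_for_G} with $\kappa = s + \vartheta$ then yields
\[
\|\varphi - G_{\T}\varphi\|_s \lesssim h_{\T}^{\vartheta} \, |\varphi|_{H^{s+\vartheta}(\Omega)} \lesssim h_{\T}^{\vartheta} \, \|e\|_{L^2(\Omega)}.
\]
Inserting this bound, together with the primal estimate $\|e\|_s \lesssim h_{\T}^{\kappa - s} |w|_{H^{\kappa}(\Omega)}$ from \eqref{eq:error_estimate_for_G}, into the previous display and cancelling one factor of $\|e\|_{L^2(\Omega)}$ (the case $e = 0$ being trivial) produces the claimed bound $\|e\|_{L^2(\Omega)} \lesssim h_{\T}^{\kappa + \vartheta - s} |w|_{H^{\kappa}(\Omega)}$.

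The main obstacle is not the duality mechanics, which are routine, but the fact that the duality gain is capped at $\vartheta \le 1/2 - \epsilon$ rather than the full $s$ one might naively expect. This ceiling is dictated precisely by the limited Sobolev regularity of solutions to the integral fractional Laplacian on smooth domains---reflecting the boundary layer $\mathrm{dist}(x,\partial\Omega)^s$ in \eqref{boundary-grubb}---and it is the reason why the smoothness assumption $\partial\Omega \in C^{\infty}$ is needed to guarantee even the $s + 1/2 - \epsilon$ regularity of the dual state.
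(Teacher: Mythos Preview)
Your proof is correct and follows essentially the same Aubin--Nitsche duality argument as the paper. The only cosmetic difference is that the paper inserts the Scott--Zhang quasi-interpolant $\Pi_{\T} z$ as the discrete approximant of the dual solution, whereas you reuse the elliptic projector $G_{\T}\varphi$; both choices yield the same $h_{\T}^{\vartheta}$ gain via Proposition~\ref{pro:state_regularity_smooth} with $r=0$.
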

\begin{proof}
 To obtain \eqref{eq:L2_error_estimate_for_G} we argue by duality. Let $z \in \tilde H^s(\Omega)$ be the solution to
\[
 \mathcal{A}(\phi,z) = \langle w -  G_{\T} w, \phi \rangle \quad \forall \phi \in \tilde H^s(\Omega).
\]
Set $\phi  = w - G_{\T} w$ and utilize that $ \mathcal{A}(w -  G_{\T} w, \Pi_{\T} z) = 0$, where $\Pi_{\T}$ denotes the Scott--Zhang quasi-interpolation operator, to obtain 
\[
 \| w -  G_{\T} w \|^2_{L^2(\Omega)} =  \mathcal{A}(w -  G_{\T} w,z) \leq \| w -  G_{\T} w \|_s \|z - \Pi_{\T} z \|_s .
\]
\EO{Apply an interpolation estimate for $\Pi_{\T}$ and Proposition \ref{pro:state_regularity_smooth}, with $r = 0$, to obtain}
\[
 \|z - \Pi_{\T} z \|_s \lesssim h^{\vartheta}_{\T} | z |_{H^{s+\vartheta}(\Omega)} \lesssim h^{\vartheta}_{\T} \| w -  G_{\T} w \|_{L^2(\Omega)},
\]
where $\vartheta = \min \{ s, 1/2 - \epsilon \}$. The estimate \eqref{eq:error_estimate_for_G} allows us to conclude.
\end{proof}

 \subsection{A fully discrete scheme}
 \label{sec:fully_scheme}
 
We now design a fully discrete scheme to solve the state equation \eqref{eq:weak_formulation}. The discretization in time uses the backward Euler scheme. The space discretization hinges on the finite element space introduced in \S\ref{sec:space_discretization}.

Set $\zsf = 0$. The scheme computes the sequence $U_{\T}^\tau  \subset \V(\T)$, an approximation of the solution to problem \eqref{eq:weak_formulation} at each time step. We initialize the scheme by setting
 \begin{equation}
 \label{eq:initial_data_discrete}
  U_{\T}^{0} =  P_{\T}  \usf_0,
 \end{equation}
 where $P_{\T}$ denotes the $L^2(\Omega)$-orthogonal projection onto $\V(\T)$. For $k=0,\dots,\K-1$, $U_{\T}^{k+1} \in \V(\T)$ solves
\begin{equation}
\label{eq:fully_scheme}
U_{\T}^{k+1} \in \V(\T):
(\mathfrak{d} U_{\T}^{k+1} , W )_{L^2(\Omega)}  + \mathcal{A}(U_{\T}^{k+1}, W) = \langle \fsf^{k+1}, W \rangle
\quad
\forall \EO{W} \in \V(\T),
\end{equation}
where $\fsf^{k+1}= \tau^{-1} \int_{t_k}^{t_{k+1}} \fsf \diff t$. We recall that $\mathfrak{d}$ is defined by \eqref{1_discrete}.

The fully discrete scheme \eqref{eq:initial_data_discrete}--\eqref{eq:fully_scheme} is unconditionally stable.

\begin{theorem}[unconditional stability]
 \label{thm:stab}
 Let $U_{\T}^\tau$ be the solution to the fully discrete scheme \eqref{eq:initial_data_discrete}--\eqref{eq:fully_scheme}. If $\fsf \in L^2(0,T; H^{-s}(\Omega))$ and $\usf_0 \in L^2(\Omega)$, then
 \begin{equation}
\| U_{\T}^\tau \|_{\ell^{\infty}(L^2(\Omega))}^2 + | U_{\T}^\tau |_{\ell^2(H^s(\R^n))}^2
\lesssim \| \usf_0 \|_{L^2(\Omega)}^2 +  \| \fsf^{\tau} \|^2_{\ell^2(H^{-s}(\Omega))}.
\label{eq:stab}
 \end{equation}
The hidden constant is independent of the data, the solution $U_{\T}^\tau$, and the discretization parameters.
 \end{theorem}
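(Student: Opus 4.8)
The plan is to run the standard energy argument for the backward Euler Galerkin scheme, exploiting the fact that $\mathcal{A}$ is symmetric and coercive so that the estimate telescopes and no discrete Gr\"onwall inequality is required. First I would test \eqref{eq:fully_scheme} with the natural choice $W = U_{\T}^{k+1}$, which, upon using $\mathcal{A}(U_{\T}^{k+1},U_{\T}^{k+1}) = \|U_{\T}^{k+1}\|_s^2$, gives
\[
(\mathfrak{d}U_{\T}^{k+1}, U_{\T}^{k+1})_{L^2(\Omega)} + \|U_{\T}^{k+1}\|_s^2 = \langle \fsf^{k+1}, U_{\T}^{k+1}\rangle.
\]
The crucial step is the elementary polarization identity $(a-b,a)_{L^2(\Omega)} = \tfrac12(\|a\|_{L^2(\Omega)}^2 - \|b\|_{L^2(\Omega)}^2 + \|a-b\|_{L^2(\Omega)}^2)$ applied with $a = U_{\T}^{k+1}$ and $b = U_{\T}^{k}$; recalling \eqref{1_discrete}, this rewrites the discrete-time term as
\[
(\mathfrak{d}U_{\T}^{k+1}, U_{\T}^{k+1})_{L^2(\Omega)} = \frac{1}{2\tau}\left( \|U_{\T}^{k+1}\|_{L^2(\Omega)}^2 - \|U_{\T}^{k}\|_{L^2(\Omega)}^2 + \|U_{\T}^{k+1}-U_{\T}^{k}\|_{L^2(\Omega)}^2 \right).
\]

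For the right-hand side I would invoke the definition of the $H^{-s}(\Omega)$-norm together with Young's inequality, namely $\langle \fsf^{k+1}, U_{\T}^{k+1}\rangle \le \|\fsf^{k+1}\|_{H^{-s}(\Omega)}\|U_{\T}^{k+1}\|_s \le \tfrac12\|\fsf^{k+1}\|_{H^{-s}(\Omega)}^2 + \tfrac12\|U_{\T}^{k+1}\|_s^2$, and then absorb the term $\tfrac12\|U_{\T}^{k+1}\|_s^2$ into the coercive contribution on the left. Multiplying the resulting inequality by $2\tau$ produces
\[
\|U_{\T}^{k+1}\|_{L^2(\Omega)}^2 - \|U_{\T}^{k}\|_{L^2(\Omega)}^2 + \|U_{\T}^{k+1}-U_{\T}^{k}\|_{L^2(\Omega)}^2 + \tau\|U_{\T}^{k+1}\|_s^2 \le \tau\|\fsf^{k+1}\|_{H^{-s}(\Omega)}^2.
\]

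Summing this telescoping estimate over $k = 0,\dots,m-1$ for an arbitrary $m \in \{1,\dots,\K\}$, discarding the nonnegative jump terms $\|U_{\T}^{k+1}-U_{\T}^{k}\|_{L^2(\Omega)}^2$, and using the $L^2(\Omega)$-stability of the projection $P_{\T}$ to bound $\|U_{\T}^{0}\|_{L^2(\Omega)} = \|P_{\T}\usf_0\|_{L^2(\Omega)} \le \|\usf_0\|_{L^2(\Omega)}$, I arrive at
\[
\|U_{\T}^{m}\|_{L^2(\Omega)}^2 + \sum_{k=0}^{m-1}\tau\|U_{\T}^{k+1}\|_s^2 \le \|\usf_0\|_{L^2(\Omega)}^2 + \sum_{k=0}^{m-1}\tau\|\fsf^{k+1}\|_{H^{-s}(\Omega)}^2.
\]
Taking the maximum over $m$ controls $\|U_{\T}^\tau\|_{\ell^{\infty}(L^2(\Omega))}^2$, while the fully summed coercive term (with $m=\K$), together with the proportionality between $\|\cdot\|_s$ and the $H^s(\R^n)$-seminorm, reproduces $|U_{\T}^\tau|_{\ell^2(H^s(\R^n))}^2$; the right-hand side is precisely the claimed bound \eqref{eq:stab}. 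I do not anticipate a genuine obstacle: because $\mathcal{A}$ carries no lower-order terms, the sum telescopes cleanly and stability is unconditional in $\tau$ and $h_{\T}$. The only point demanding care is bookkeeping in the absorption step, so that a strictly positive multiple of $\|U_{\T}^{k+1}\|_s^2$ survives on the left after estimating the forcing term.
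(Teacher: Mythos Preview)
Your argument is correct and coincides with the paper's proof: test \eqref{eq:fully_scheme} with (a multiple of) $U_{\T}^{k+1}$, use the polarization identity $2(a-b,a)=\|a\|^2-\|b\|^2+\|a-b\|^2$ together with Young's inequality on the forcing, and sum over $k$. The only cosmetic difference is that the paper tests directly with $W=2\tau U_{\T}^{k+1}$ rather than testing with $U_{\T}^{k+1}$ and multiplying by $2\tau$ afterwards.
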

\begin{proof}
Set $W = 2 \tau U_{\T}^{k+1}$ in \eqref{eq:fully_scheme}. The relation $2(a-b)a = a^2 - b^2 + (a-b)^2$ and Young's inequality yield
\begin{equation*}
\| U_{\T}^{k+1} \|_{L^2(\Omega)}^2 - \| U_{\T}^{k} \|_{L^2(\Omega)}^2 + \| U_{\T}^{k+1} - U_{\T}^{k} \|_{L^2(\Omega)}^2 + \tau \| U_{\T}^{k+1} \|_{s}^2 \lesssim \tau \| \fsf^{k+1} \|_{H^{-s}(\Omega)}^2.
\end{equation*}
The stability estimate \eqref{eq:stab} follows from adding the previous inequality over $k$.
\end{proof}

\subsection{$L^2(Q)$-error estimate}
\EO{As a technical instrument}, we introduce a semidiscrete approximation of problem \eqref{eq:weak_formulation}: Set $\zsf = 0$ and $U^0 = \usf_0$. For $k=0,\dots,\K-1$, $U^{k+1} \in \tilde H^s(\Omega)$ solves
\begin{equation}
\label{eq:semi_discrete}
(\mathfrak{d} U^{k+1} , \phi )_{L^2(\Omega)}  + \mathcal{A}(U^{k+1},\phi) = \langle \fsf^{k+1}, \phi \rangle
\quad
\forall \phi \in \tilde H^s(\Omega).
\end{equation}

The scheme \eqref{eq:semi_discrete} is unconditionally stable.

\begin{theorem}[unconditional stability]
 \label{thm:stab_improved}
Let $U^{\tau}$ be the solution to \eqref{eq:semi_discrete}. If $\fsf \in L^2(Q)$ and $\usf_0 \in \tilde H^s(\Omega)$, then 
 \begin{equation}
\| \mathfrak{d} U^\tau \|_{\ell^{2}(L^2(\Omega))}^2 + | U^\tau |_{\ell^{\infty}(H^s(\R^n))}^2
\lesssim | \usf_0 |_{H^s(\mathbb{R}^n)}^2 +  \| \fsf^{\tau} \|^2_{\ell^2(L^2(\Omega))},
\label{eq:stab_improved}
 \end{equation}
where the hidden constant is independent of the data, the solution $U^\tau$, and $\tau$.
 \end{theorem}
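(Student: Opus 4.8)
The plan is to mimic, at the discrete level, the formal continuous energy argument behind \eqref{eq:energy_u_2}: I would test the scheme \eqref{eq:semi_discrete} against the discrete time derivative $\mathfrak{d}U^{k+1}$ rather than against $U^{k+1}$ itself. Concretely, for each $k \in \{0,\dots,\K-1\}$ I set $\phi = \mathfrak{d}U^{k+1}$ in \eqref{eq:semi_discrete}, which gives
\[
\|\mathfrak{d}U^{k+1}\|_{L^2(\Omega)}^2 + \mathcal{A}(U^{k+1},\mathfrak{d}U^{k+1}) = (\fsf^{k+1}, \mathfrak{d}U^{k+1})_{L^2(\Omega)}.
\]
Here the hypothesis $\fsf \in L^2(Q)$ is essential: it guarantees $\fsf^{k+1} \in L^2(\Omega)$, so the duality pairing on the right collapses to an $L^2(\Omega)$ inner product that can be estimated against $\|\mathfrak{d}U^{k+1}\|_{L^2(\Omega)}$.

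The key algebraic step exploits the symmetry of $\mathcal{A}$ via the identity $2\mathcal{A}(a,a-b) = \mathcal{A}(a,a) - \mathcal{A}(b,b) + \mathcal{A}(a-b,a-b)$ with $a = U^{k+1}$ and $b = U^k$. Since $\mathfrak{d}U^{k+1} = \tau^{-1}(U^{k+1}-U^k)$ and $\|v\|_s^2 = \mathcal{A}(v,v)$, the bilinear term becomes $\frac{1}{2\tau}\left(\|U^{k+1}\|_s^2 - \|U^k\|_s^2 + \|U^{k+1}-U^k\|_s^2\right)$. Bounding the right-hand side by Cauchy--Schwarz and Young's inequality as $(\fsf^{k+1}, \mathfrak{d}U^{k+1})_{L^2(\Omega)} \leq \tfrac12\|\fsf^{k+1}\|_{L^2(\Omega)}^2 + \tfrac12\|\mathfrak{d}U^{k+1}\|_{L^2(\Omega)}^2$ lets me absorb half of $\|\mathfrak{d}U^{k+1}\|_{L^2(\Omega)}^2$ into the left. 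After multiplying by $2\tau$ I obtain, for every $k$,
\[
\tau\|\mathfrak{d}U^{k+1}\|_{L^2(\Omega)}^2 + \|U^{k+1}\|_s^2 - \|U^k\|_s^2 + \|U^{k+1}-U^k\|_s^2 \leq \tau \|\fsf^{k+1}\|_{L^2(\Omega)}^2.
\]

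To conclude, I would sum this inequality over $k = 0, \dots, m-1$ for an arbitrary endpoint $m \leq \K$; the seminorm terms telescope and the nonnegative increments $\|U^{k+1}-U^k\|_s^2$ may be discarded, yielding
\[
\|U^m\|_s^2 + \sum_{k=0}^{m-1}\tau\|\mathfrak{d}U^{k+1}\|_{L^2(\Omega)}^2 \leq \|U^0\|_s^2 + \|\fsf^\tau\|_{\ell^2(L^2(\Omega))}^2.
\]
Recalling that $U^0 = \usf_0$ and that $\|\cdot\|_s$ is a fixed multiple of the $H^s(\mathbb{R}^n)$-seminorm $|\cdot|_{H^s(\mathbb{R}^n)}$, the right-hand side is bounded by $|\usf_0|_{H^s(\mathbb{R}^n)}^2 + \|\fsf^\tau\|_{\ell^2(L^2(\Omega))}^2$ uniformly in $m$. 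Taking the maximum over $m$ controls $|U^\tau|_{\ell^{\infty}(H^s(\mathbb{R}^n))}^2$, while specializing to $m = \K$ and reindexing the sum controls $\|\mathfrak{d}U^\tau\|_{\ell^{2}(L^2(\Omega))}^2$, which is precisely \eqref{eq:stab_improved}.

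Since this is the discrete analogue of the formal ``multiply by $\partial_t\usf$'' estimate, I do not anticipate a genuine obstacle. The only points demanding care are the three places where the stronger data assumptions enter: the reliance on $\fsf \in L^2(Q)$ (rather than merely $L^2(0,T;H^{-s}(\Omega))$) to turn the right-hand pairing into an $L^2(\Omega)$ product, the use of $\usf_0 \in \tilde H^s(\Omega)$ to control the initial seminorm $\|U^0\|_s$, and the necessity of writing the telescoped bound for a \emph{general} endpoint $m$ so that the $\ell^{\infty}$-in-time seminorm, and not merely the terminal value $\|U^{\K}\|_s$, is captured.
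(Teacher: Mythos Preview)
Your proposal is correct and follows exactly the route sketched in the paper: testing \eqref{eq:semi_discrete} with the discrete derivative (equivalently with $U^{k+1}-U^k$), invoking the identity $2(a-b)a=a^2-b^2+(a-b)^2$ at the level of the bilinear form $\mathcal{A}$, and summing over $k$. Your write-up merely spells out in more detail the Young-inequality absorption and the passage to the $\ell^{\infty}$ bound via an arbitrary endpoint $m$, which the paper leaves implicit.
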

 \begin{proof}
 Set $W = U^{k+1} - U^k$ in \eqref{eq:fully_scheme}, use the relation $2(a-b)a = a^2 - b^2 + (a-b)^2$, and add over $k$.
 \end{proof}
 
 Define the piecewise linear function $\hat{U} \in C^{0,1}([0,T]; \tilde H^s(\Omega))$ by 
 \begin{equation}
 \label{hatV}
 \hat{U}(0) = U^0,
 \quad
 \hat{U}(t) = U^{k} + (t-t_k) \mathfrak{d} U^{k+1}, \quad t \in (t_k,t_{k+1}],
 \end{equation}
 for $k = 0,\dots,\K-1$. An important observation is that, for $t \in (t_k,t_{k+1}]$, $\partial_t \hat{U}(t) = \mathfrak{d} U^{k+1}$. We can thus rewrite the semidiscrete scheme \eqref{eq:semi_discrete}, for a.e.~$t \in (0,T)$, as
 \begin{equation}
 \label{eq:newsemi_discrete}
( \partial_t \hat{U}(t), \phi )_{L^2(\Omega)}  + \mathcal{A}(U^\tau(t),\phi) = \left\langle \fsf^{\tau}(t), \phi   \right\rangle \quad \forall \phi \in \tilde H^s(\Omega).
 \end{equation} 

Define $\hat{e}:= \usf - \hat{U}$ and $\bar e:= \usf - U^{\tau}$. We observe that $\hat e (0) = \bar e(0) = 0$. In addition, since the form $\mathcal{A}$ is bilinear and continuous, basic computations reveal that
\[
 \frac{\diff}{\diff t} \mathcal{A} \left( \int_0^t \bar{e}(\xi) \diff \xi, \int_0^t \bar{e}(\xi) \diff \xi\right)
 =
 2 \mathcal{A}\left( \int_0^t \bar{e}(\xi) \diff \xi, \bar{e}(t) \right).
\]
Consequently,
\begin{equation}
\label{eq:integral_positive}
   \int_0^T \mathcal{A} \left( \int_0^t \bar{e}(\xi) \diff \xi, \bar{e}(t) \right) \diff t  = \frac{1}{2}  \mathcal{A} \left( \int_0^T \bar{e}(t) \diff t,\int_0^T \bar{e}(t) \diff t \right) \geq 0.
 \end{equation}
 
We now derive an error estimate for the semidiscrete scheme \eqref{eq:semi_discrete}. 
 \begin{theorem}[semi-discrete error estimate]
 \label{thm:rate_state_1}
Let $\usf$ and $U^\tau$ be the solutions to \eqref{eq:weak_formulation} and \eqref{eq:semi_discrete}, respectively. If $\usf_0 \in \tilde H^s(\Omega)$ and $\fsf \in L^{\infty}(0,T;L^2(\Omega))$, then
 \begin{equation}
 \label{eq:semi_discrete_error_estimate}
   \| \usf - U^\tau \|_{L^2(0,T;L^2(\Omega))}  \lesssim \tau \left( | \usf_0 |_{H^s(\mathbb{R}^n)} +  \| \fsf \|_{L^{\infty}(0,T;L^2(\Omega))}\right).
 \end{equation}
 The hidden constant is independent of the data, $\usf$, $U^{\tau}$, and $\tau$.
 \end{theorem}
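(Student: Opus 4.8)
The plan is to compare the continuous weak formulation \eqref{eq:weak_formulation} (with $\zsf = 0$) against the reformulated semidiscrete scheme \eqref{eq:newsemi_discrete}, and to exploit the integrated identity \eqref{eq:integral_positive}. Subtracting \eqref{eq:newsemi_discrete} from \eqref{eq:weak_formulation} and recalling that $\hat e = \usf - \hat U$ and $\bar e = \usf - U^\tau$, I obtain the error equation $(\partial_t \hat e, \phi)_{L^2(\Omega)} + \mathcal{A}(\bar e, \phi) = \langle \fsf - \fsf^\tau, \phi \rangle$, valid for a.e.~$t$ and all $\phi \in \tilde H^s(\Omega)$. The subtle feature is that the time-derivative term carries the piecewise linear error $\hat e$ while the bilinear form sees the piecewise constant error $\bar e$; reconciling this mismatch is the crux of the argument.

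First I would integrate the error equation over $(0,t)$ against a fixed $\phi$. Using $\hat e(0) = 0$ and the continuity of $\mathcal{A}$, this yields $(\hat e(t), \phi)_{L^2(\Omega)} + \mathcal{A}\!\left( \int_0^t \bar e(\xi)\diff \xi, \phi \right) = \left( G(t), \phi \right)_{L^2(\Omega)}$, where $G(t) := \int_0^t (\fsf - \fsf^\tau)(\xi)\diff \xi$. Choosing $\phi = \bar e(t)$, integrating over $(0,T)$, and invoking \eqref{eq:integral_positive} to discard the nonnegative contribution $\tfrac{1}{2}\mathcal{A}\!\left( \int_0^T \bar e, \int_0^T \bar e \right)$, I arrive at $\int_0^T (\hat e, \bar e)_{L^2(\Omega)} \diff t \leq \int_0^T (G, \bar e)_{L^2(\Omega)} \diff t$. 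Writing $\hat e = \bar e + (U^\tau - \hat U)$ then isolates $\| \bar e \|_{L^2(Q)}^2$ on the left and leaves the two terms $\int_0^T (\hat U - U^\tau, \bar e)_{L^2(\Omega)} \diff t$ and $\int_0^T (G, \bar e)_{L^2(\Omega)} \diff t$ to control.

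Next I would bound these two terms, each by $\tau$ times the data multiplied by $\| \bar e \|_{L^2(Q)}$. For the interpolation term, the explicit formula \eqref{hatV} gives $U^\tau(t) - \hat U(t) = (t_{k+1} - t)\, \mathfrak{d} U^{k+1}$ on $(t_k, t_{k+1}]$, whence $\| U^\tau - \hat U \|_{L^2(Q)}^2 = \tfrac{\tau^2}{3} \| \mathfrak{d} U^\tau \|_{\ell^2(L^2(\Omega))}^2$; the improved stability estimate \eqref{eq:stab_improved} then bounds this by $\tau^2\bigl( | \usf_0 |_{H^s(\mathbb{R}^n)}^2 + \| \fsf \|_{L^\infty(0,T;L^2(\Omega))}^2 \bigr)$, after noting that $\| \fsf^\tau \|_{\ell^2(L^2(\Omega))} \lesssim \| \fsf \|_{L^\infty(0,T;L^2(\Omega))}$. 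For the consistency term, the key observation is that $\fsf^{k+1}$ is precisely the time average of $\fsf$ over $(t_k, t_{k+1})$, so $G$ vanishes at every node $t_k$, and on each subinterval $\| G(t) \|_{L^2(\Omega)} \leq \int_{t_k}^{t_{k+1}} \| \fsf - \fsf^{k+1} \|_{L^2(\Omega)} \diff \xi \leq 2\tau \| \fsf \|_{L^\infty(0,T;L^2(\Omega))}$, whence $\| G \|_{L^2(Q)} \lesssim \tau \| \fsf \|_{L^\infty(0,T;L^2(\Omega))}$.

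Collecting these bounds and applying Cauchy--Schwarz gives $\| \bar e \|_{L^2(Q)}^2 \lesssim \tau \bigl( | \usf_0 |_{H^s(\mathbb{R}^n)} + \| \fsf \|_{L^\infty(0,T;L^2(\Omega))} \bigr) \| \bar e \|_{L^2(Q)}$, and dividing by $\| \bar e \|_{L^2(Q)}$ yields \eqref{eq:semi_discrete_error_estimate}. I expect the main obstacle to be exactly the mismatch between $\hat e$ and $\bar e$ in the error equation: the integration-in-time device is what simultaneously converts $\mathcal{A}(\bar e, \cdot)$ into the sign-definite quantity \eqref{eq:integral_positive} and recasts the consistency defect as the primitive $G$, whose vanishing at the nodes supplies the extra factor of $\tau$ needed for first-order convergence.
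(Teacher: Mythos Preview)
Your proposal is correct and follows essentially the same approach as the paper's proof: subtract the rewritten semidiscrete scheme from the continuous problem, integrate in time, set $\phi=\bar e(t)$, use \eqref{eq:integral_positive} to drop the bilinear term, and bound the two remaining contributions via (i) the vanishing of $G$ at the nodes $t_k$ and (ii) the estimate $\|U^\tau-\hat U\|_{L^2(Q)}\lesssim \tau\|\mathfrak d U^\tau\|_{\ell^2(L^2(\Omega))}$ combined with \eqref{eq:stab_improved}. The only cosmetic differences are that the paper moves the $(\bar e-\hat e,\phi)$ term to the right before testing (whereas you split $\hat e=\bar e+(U^\tau-\hat U)$ afterward) and uses Young's inequality with constant $\tfrac14$ instead of Cauchy--Schwarz followed by division; both are equivalent.
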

 \begin{proof}
\EO{Recall that }$\zsf = 0$ in \eqref{eq:weak_formulation}. Subtract from it \eqref{eq:newsemi_discrete} and integrate the resulting expression with respect to time. This yields, \EO{for a.e.~$t$,
 \begin{equation*}
 (  \bar{e}(t), \phi )_{L^2(\Omega)}  + \mathcal{A}\left(\int_0^t \bar{e}(\xi) \diff \xi, \phi \right) 
 = 
 ( \bar{e}(t) - \hat{e}(t) ,  \phi )_{L^2(\Omega)} 
 +
   \left\langle \int_0^t (\fsf(\xi) - \fsf^{\tau}(\xi)) \diff \xi,  \phi   \right\rangle
 \end{equation*}
for all $\phi \in \tilde H^s(\Omega)$.} Set, for a.e.~$t\in (0,T)$, $\phi = \bar{e}(t) \in \tilde H^s(\Omega)$. Integrate with respect to time, again,
and  invoke the identity \eqref{eq:integral_positive}, to arrive at
 \begin{multline}
 \label{eq:aux_estimate_1}
   \int_0^T \| \bar{e}(t) \|^2_{L^2(\Omega)}\diff t 
   \leq \left| \int_0^T \left \langle \int_0^t [\fsf(\xi) - \fsf^{\tau}(\xi)] \diff \xi, \bar{e}(t)   \right\rangle \diff t \right| \\
   + \left| \int_0^T (  \bar{e}(t) - \hat{e}(t) , \bar{e}(t) )_{L^2(\Omega)} \diff t \right| =: \textrm{I} + \textrm{II}. 
 \end{multline}

It thus suffices to estimate $\textrm{I}$ and $\textrm{II}$. To control the term $\textrm{I}$, we first notice that, since $\fsf^{k+1} = \tau^{-1}\int_{t_k}^{t_{k+1}} \fsf(t) \diff t$, we have, for $\ell \in \{1,\cdots,\mathcal{K}\}$,
 \[
   \int_0^{t_\ell} (\fsf(\xi) - \fsf^\tau(\xi) ) \diff \xi = \sum_{k=1}^{\ell} \int_{t_{k-1}}^{t_k} \left( \fsf(\xi) - \fsf^k \right) \diff \xi = 0.
 \]
\EO{If $t\in (t_\ell, t_{\ell+1})$, we have
$
   \int_0^t [\fsf(\xi) - \fsf^\tau(\xi) ] \diff \xi = \int_{t_\ell}^{t} [\fsf(\xi) - \fsf^\tau(\xi) ] \diff \xi \lesssim \tau \| \fsf \|_{L^\infty(0,T)}.
$
Thus,}
\[
\textrm{I}
\leq \int_0^T \left \|  \int_0^t [\fsf(\xi) - \fsf^{\tau}(\xi)] \diff \xi \right\|_{L^2(\Omega)} \| \bar{e}(t) \|_{L^2(\Omega)} \diff t
 \lesssim
   \tau^2 \| \fsf \|_{L^\infty(0,T,L^2(\Omega))}^2 + \frac14 \| \bar{e} \|_{L^2(Q)}^2.
\]

We now focus on estimating the term \textrm{II}. Since, on $(t_k,t_{k+1}]$, we have that $|\bar{e}(t) - \hat{e}(t)| \lesssim \tau |\mathfrak{d} U^{k+1}|$, we invoke the stability estimate \eqref{eq:stab_improved} to conclude that
 \[
   \int_0^T \| \hat{e}(t) -\bar{e}(t) \|^2_{L^2(\Omega)}\diff t \lesssim
   \tau^2 \left\| \mathfrak{d} U^{\tau} \right\|_{\ell^2(L^2(\Omega))}^2
   \lesssim \tau^2 \left( | \usf_0|_{H^s(\mathbb{R}^n)}^2 + \| \fsf^{\tau}\|^2_{\ell^2(L^2(\Omega))} \right),
 \]
which implies the bound
$
   \mathrm{II}
   \leq \frac14 \| \bar{e} \|_{L^2(0,T;L^2(\Omega))}^2 
   + C \tau^2 \left( | \usf_0|_{H^s(\mathbb{R}^n)}^2 + \| \fsf^{\tau}\|^2_{\ell^2(L^2(\Omega))} \right).
$
 
 The desired estimate \eqref{eq:semi_discrete_error_estimate} follows from replacing the estimates for $\mathrm{I}$ and $\mathrm{II}$ into \eqref{eq:aux_estimate_1}. This concludes the proof.
\end{proof}

We now control the difference between the fully and the semidiscrete problems. 

\begin{theorem}[auxiliary error estimate]
\label{thm:rate_state_2}
\EO{Let $s \in (0,1)$ and $\Omega$ be a domain such that $\partial \Omega \in C^{\infty}$. Let $U^\tau$ and $U^{\tau}_{\T}$ be the solutions to problems \eqref{eq:semi_discrete} and \eqref{eq:fully_scheme}, respectively. If $\usf_0 \in \tilde H^{\vartheta+1/2-\epsilon}(\Omega)$ and $\fsf \in L^{2}(0,T;H^{1/2-s-\epsilon}(\Omega))$, then
\begin{equation*}
\label{estimate_aux}
\| U^\tau - U^{\tau}_{\T} \|_{\ell^2(L^2(\Omega))}  \lesssim h_{\T}^{\vartheta + 1/2 - \epsilon} \left( \|\usf_0 \|_{H^{\vartheta+1/2-\epsilon}(\mathbb{R}^n)} + \| \fsf \|_{L^{2}(0,T;H^{1/2-s-\epsilon}(\Omega))} \right),
\end{equation*}
where $\vartheta = \min \{s,1/2-\epsilon\}$ and $\epsilon > 0$ is arbitrarily small. The hidden constant does not depend on $U^{\tau}$, $U^{\tau}_{\T}$, or the problem data.}
\end{theorem}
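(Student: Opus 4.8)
The plan is to isolate the spatial error by means of the elliptic projector $G_{\T}$ of \eqref{eq:elliptic_projection} and then to obtain the $\ell^2(L^2(\Omega))$ estimate through a discrete parabolic duality argument; it is the duality that allows the sharp exponent to be reached without demanding (unavailable) regularity of the discrete time derivative of $U^{\tau}$. Writing $e^{k+1} := U^{k+1}-U^{k+1}_{\T}$ and subtracting \eqref{eq:fully_scheme} from \eqref{eq:semi_discrete} (the latter valid for every $\phi\in\tilde H^s(\Omega)\supset\V(\T)$), I get the Galerkin orthogonality $(\mathfrak{d}e^{k+1},W)_{L^2(\Omega)}+\mathcal{A}(e^{k+1},W)=0$ for all $W\in\V(\T)$, with $e^0=(I-P_{\T})\usf_0$; I also record that $(I-G_{\T})e^{k+1}=(I-G_{\T})U^{k+1}=:\rho^{k+1}$ since $U^{k+1}_{\T}\in\V(\T)$.

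The first ingredient is the uniform spatial regularity $U^{k+1}\in H^{s+1/2-\epsilon}(\Omega)$. Each iterate solves $\mathcal{A}(U^{k+1},\phi)=\langle\fsf^{k+1}-\mathfrak{d}U^{k+1},\phi\rangle$, i.e. problem \eqref{eq:elliptic_problem} with datum $\fsf^{k+1}-\mathfrak{d}U^{k+1}$, and the elliptic regularity estimate \eqref{eq:regularity_state_smooth} with $r=1/2-s-\epsilon$ upgrades this to $U^{k+1}\in H^{s+1/2-\epsilon}(\Omega)$ once the datum is bounded in $H^{1/2-s-\epsilon}(\Omega)$. As $\fsf\in L^2(0,T;H^{1/2-s-\epsilon}(\Omega))$ by hypothesis, it remains to control $\mathfrak{d}U^{\tau}$ in $\ell^2(H^{1/2-s-\epsilon}(\Omega))$: for $s\geq 1/2$ the exponent is nonpositive and this is immediate from \eqref{eq:stab_improved}, while for $s<1/2$ I would establish the analogue of Theorem \ref{thm:stab_improved} on the scale $H^{1/2-s-\epsilon}(\Omega)$, the hypothesis $\usf_0\in\tilde H^{\vartheta+1/2-\epsilon}(\Omega)$ providing the initial-layer control. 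Proposition \ref{pro:L2_elliptic} and its $H^s$-seminorm counterpart \eqref{eq:L2_error_estimate_for_G} then give $\|(I-G_{\T})U^{k+1}\|_{L^2(\Omega)}\lesssim h_{\T}^{\vartheta+1/2-\epsilon}|U^{k+1}|_{H^{s+1/2-\epsilon}(\Omega)}$ and $\|\rho^{k+1}\|_s\lesssim h_{\T}^{1/2-\epsilon}|U^{k+1}|_{H^{s+1/2-\epsilon}(\Omega)}$.

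I next introduce the backward-in-time adjoint of \eqref{eq:fully_scheme} driven by the error: with $\bar{\mathfrak{d}}$ as in \eqref{1_discrete_forward}, the space-continuous sequence $\Phi^{\tau}\subset\tilde H^s(\Omega)$ (terminal value $0$) solves $(\bar{\mathfrak{d}}\Phi^{k},\phi)_{L^2(\Omega)}+\mathcal{A}(\Phi^{k},\phi)=(e^{k},\phi)_{L^2(\Omega)}$ for all $\phi\in\tilde H^s(\Omega)$. Testing with $e^{k}$, summing, and summing by parts in time, then inserting the Galerkin orthogonality with test function $G_{\T}\Phi^{k+1}$ and the $\mathcal{A}$-orthogonality of $\rho^{k+1}$, leaves the clean representation
\[
\|e^{\tau}\|_{\ell^2(L^2(\Omega))}^2 = \tau\sum_{k}\Bigl[(\mathfrak{d}e^{k+1},(I-G_{\T})\Phi^{k+1})_{L^2(\Omega)}+\mathcal{A}\bigl(\rho^{k+1},(I-G_{\T})\Phi^{k+1}\bigr)\Bigr]+(\Phi^{1},e^{0})_{L^2(\Omega)}.
\]
Because its right-hand side is in $\ell^2(L^2(\Omega))$, the adjoint solution satisfies $\Phi^{k}\in H^{s+\vartheta}(\Omega)$ by elliptic regularity and the dual stability bound $\|\bar{\mathfrak{d}}\Phi^{\tau}\|_{\ell^2(L^2(\Omega))}+\|\Phi^{\tau}\|_{\ell^2(H^{s+\vartheta}(\Omega))}\lesssim\|e^{\tau}\|_{\ell^2(L^2(\Omega))}$. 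The second sum is then bounded by $\|\rho^{\tau}\|_{\ell^2(H^s)}\,\|(I-G_{\T})\Phi^{\tau}\|_{\ell^2(H^s)}\lesssim h_{\T}^{1/2-\epsilon}\,h_{\T}^{\vartheta}\,\|e^{\tau}\|_{\ell^2(L^2(\Omega))}$, i.e. exactly the rate $h_{\T}^{\vartheta+1/2-\epsilon}$, and the boundary term by $\|(I-P_{\T})\usf_0\|_{L^2(\Omega)}\lesssim h_{\T}^{\vartheta+1/2-\epsilon}\|\usf_0\|_{H^{\vartheta+1/2-\epsilon}(\mathbb{R}^n)}$; dividing through by $\|e^{\tau}\|_{\ell^2(L^2(\Omega))}$ closes these contributions.

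The main obstacle is the first sum, $\tau\sum_{k}(\mathfrak{d}e^{k+1},(I-G_{\T})\Phi^{k+1})$, which encodes the discrete time derivative of the error. The crude bound $\|\mathfrak{d}e^{\tau}\|_{\ell^2(L^2(\Omega))}\,\|(I-G_{\T})\Phi^{\tau}\|_{\ell^2(L^2(\Omega))}$ yields only $h_{\T}^{2\vartheta}$, which matches the target for $s\geq 1/2$ but is deficient by $h_{\T}^{1/2-s}$ when $s<1/2$ (the dual solution being capped at $H^{s+\vartheta}=H^{2s}$). To recover $h_{\T}^{\vartheta+1/2-\epsilon}$ in that regime I would split $\mathfrak{d}e=\mathfrak{d}U-\mathfrak{d}U_{\T}$, pair $\mathfrak{d}U^{\tau}\in\ell^2(H^{1/2-s-\epsilon}(\Omega))$ against the negative-norm projection error $\|(I-G_{\T})\Phi^{\tau}\|_{\ell^2(H^{-(1/2-s-\epsilon)}(\Omega))}$ (duality against the fractional elliptic problem buying the extra $h_{\T}^{1/2-\epsilon}$), and handle the fully discrete piece $\mathfrak{d}U^{\tau}_{\T}$ by exploiting the $L^2(\Omega)$-orthogonality $(\mathfrak{d}U^{k+1}_{\T},(I-P_{\T})\Phi^{k+1})=0$ together with a fully discrete parabolic regularity estimate on the same scale. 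Securing this fractional-scale control of the two discrete time derivatives — which is precisely where the structural hypotheses on $\fsf$ and $\usf_0$ are consumed — and tracking the summation-by-parts boundary terms is the technical heart; the remaining steps are routine applications of the stability and approximation results already established.
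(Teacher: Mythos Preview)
Your parabolic-duality route is genuinely different from the paper's, and the obstacle you flag for $s<\tfrac12$ is real; the paper, however, bypasses it rather than resolving it. After the same splitting $U^{k+1}-U_{\T}^{k+1}=\theta^{k+1}+\rho_{\T}^{k+1}$ with $\theta^{k+1}=(I-G_{\T})U^{k+1}$ and the same bound on $\|\theta^\tau\|_{\ell^2(L^2(\Omega))}$ via \eqref{eq:L2_error_estimate_for_G}, the paper treats $\rho_{\T}^\tau=G_{\T}U^\tau-U_{\T}^\tau$ not by duality but by \emph{summing the scheme in time}, in the spirit of \cite[Lemma~3.8]{BGRV} and of Theorem~\ref{thm:rate_state_1}. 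Adding \eqref{eq:semi_discrete} (tested against $W\in\V(\T)$) and \eqref{eq:fully_scheme} from $k=0$ to $\ell$ telescopes the $\mathfrak{d}$-terms to point values, leaving
\[
(\rho_{\T}^{\ell+1},W)_{L^2(\Omega)}+\mathcal{A}\Bigl(\tau\!\sum_{k=0}^{\ell}\rho_{\T}^{k+1},W\Bigr)=(\usf_0-P_{\T}\usf_0,W)_{L^2(\Omega)}-(\theta^{\ell+1},W)_{L^2(\Omega)}.
\]
Choosing $W=\rho_{\T}^{\ell+1}$, multiplying by $\tau$, summing in $\ell$, and using the nonnegativity of the accumulated bilinear-form term (the discrete analogue of \eqref{eq:integral_positive}, quoted from \cite{BGRV}) gives directly $\|\rho_{\T}^{\tau}\|_{\ell^2(L^2(\Omega))}\lesssim\|\usf_0-P_{\T}\usf_0\|_{L^2(\Omega)}+\|\theta^{\tau}\|_{\ell^2(L^2(\Omega))}$. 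No control of $\mathfrak{d}U^\tau$, $\mathfrak{d}U_{\T}^\tau$, or $\mathfrak{d}\theta^\tau$ is ever needed, so the fractional-scale discrete maximal regularity you would develop never enters.

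In your approach, the handling of the $\mathfrak{d}U_{\T}^\tau$ contribution for $s<\tfrac12$ is where I see a genuine gap. The $L^2$-orthogonality $(\mathfrak{d}U_{\T}^{k+1},(I-P_{\T})\Phi^{k+1})=0$ removes only the $(I-P_{\T})$ part of $(I-G_{\T})\Phi^{k+1}$; the remainder $(P_{\T}-G_{\T})\Phi^{k+1}=P_{\T}(I-G_{\T})\Phi^{k+1}$ lies in $\V(\T)$ with $L^2$-norm of order $h_{\T}^{2\vartheta}=h_{\T}^{2s}$, so pairing it against $\|\mathfrak{d}U_{\T}^\tau\|_{\ell^2(L^2(\Omega))}$ merely reproduces the deficient rate. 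Upgrading this via a uniform-in-$h_{\T}$ bound on $\|\mathfrak{d}U_{\T}^\tau\|_{\ell^2(H^{1/2-s-\epsilon}(\Omega))}$ in a positive-order space is not available from the paper's toolbox and is delicate for piecewise-linear functions. The summed-equation argument is both shorter and avoids this difficulty entirely.
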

\begin{proof}
\EO{As it is customary, we split the error as follows:
\begin{equation}
\label{En}
E^\tau = (U^\tau - G_{\T} U^\tau) + (G_{\T} U^\tau - U_{\T}^\tau) =: \theta^\tau + \rho_{\T}^\tau;
\end{equation}
$G_{\T}$ denotes the elliptic projector defined in \eqref{eq:elliptic_projection}. To estimate $\theta^\tau$ we invoke estimate \eqref{eq:L2_error_estimate_for_G} with $\kappa = s + 1/2 - \epsilon$ and the regularity results of Theorem \ref{th:regularity_estimate} with $r = 1/2 - s - \epsilon$:
\begin{equation}
\| \theta^{\tau} \|_{\ell^2(L^2(\Omega))} \lesssim h_{\T}^{\vartheta + 1/2 -\epsilon } \| U^{\tau} \|_{\ell^2(H^{s+1/2 - \epsilon}(\Omega))}
\\ \lesssim h_{\T}^{\vartheta + 1/2 - \epsilon } \mathfrak{B}(\usf_0,\fsf),
\label{eq:estimate_for_theta}
\end{equation}
where $\mathfrak{B}(\usf_0,\fsf) := \| \usf_0 \|_{H^{1/2-\epsilon}(\mathbb{R}^n)} + \| \fsf \|_{L^{2}(0,T;H^{1/2 - s -\epsilon}(\Omega))}$ and $\vartheta = \min \{s, 1/2- \epsilon \}$. 

The estimate of the term $\rho^\tau_{\T}$ follows along the same lines of \cite[Lemma 3.8]{BGRV}. Let $W \in \V(\T)$. Set $\phi = W$ in \eqref{eq:semi_discrete}, multiply by $\tau$, sum from $k=0$ to $k=\ell$, and invoke the definition of the elliptic projection $G_{\T}$, given in \eqref{eq:elliptic_projection}, to obtain
 \begin{equation*}
 \langle   G_{\T} U^{\ell+1}, W \rangle  + \mathcal{A} \left ( \sum_{k=0}^{\ell} \tau G_{\T} U^{k+1},W \right) =  \left( \sum_{k=0}^{\ell} \tau  \fsf^{k+1}, W \right) 
 + \langle \usf_0, W \rangle -   \langle  \theta^{\ell +1} , W \rangle.
  \label{eq:tl+1}
 \end{equation*}
Similarly,
\begin{equation*}
 \label{eq:tl+1_discrete}
 \langle U_{\T}^{\ell+1}, W \rangle  + \mathcal{A} \left(\sum_{k=0}^{\ell} \tau U_{\T}^{k+1} , W \right) =  \langle  P_{\T} \usf_0, W \rangle + \left(\sum_{k=0}^{\ell} \tau \fsf^{k+1} , W \right).
 \end{equation*}
Consequently, for $\ell = 0,\dots,\K-1$, $\rho_{\T}^{\ell+1} \in \V(\T)$ solves
 \begin{equation}
 \langle   \rho_{\T}^{\ell+1}, W \rangle  + \mathcal{A} \left ( \sum_{k=0}^{\ell} \tau \rho_{\T}^{k+1},W \right) =  \langle  \usf_0 - P_{\T} \usf_0, W \rangle 
 - \langle  \theta^{\ell +1} , W \rangle \quad \forall W \in \V(\T).
  \label{eq:rho_fully_discrete}
 \end{equation}
Observe that $\rho_{\T}^{0}  = G_{\T} \usf_0 - P_{\T} \usf_0$. Set $W = \rho_{\T}^{\ell + 1}$ in \eqref{eq:rho_fully_discrete} to arrive at
\begin{multline*}
 \| \rho_{\T}^{\ell+1} \|^2_{L^2(\Omega)} + \tau   \mathcal{A} \left ( \sum_{k=0}^{\ell} \rho_{\T}^{k+1}, \rho_{\T}^{\ell+1} \right) \leq \frac{1}{4}  \| \rho_{\T}^{\ell+1} \|^2_{L^2(\Omega)}  + C_1 \|P_{\T} \usf_0 - \usf_0 \|^2_{L^2(\Omega)}
 \\
 + \frac{1}{4} \| \rho_{\T}^{\ell+1} \|^2_{L^2(\Omega)} + C_2  \| \theta^{\ell+1} \|^2_{L^2(\Omega)},
 \end{multline*}
where $C_1, C_2 >0$. Multiply the previous inequality by $\tau$ and add over $\ell$ to obtain
 \begin{equation*}
 \| \rho_{\T}^{\tau} \|^2_{\ell^2(L^2(\Omega))} + \tau^2\sum_{\ell = 0}^{\K-1} \mathcal{A} \left ( \sum_{k=0}^{\ell} \rho_{\T}^{k+1}, \rho_{\T}^{\ell+1} \right) 
 \lesssim  \|P_{\T} \usf_0 - \usf_0 \|^2_{L^2(\Omega)} + \| \theta^{\tau} \|^2_{\ell^2(L^2(\Omega))}.
 \end{equation*}
We now invoke \cite[inequality (3.40)]{BGRV}, which reads 
\[
\tau^2\sum_{\ell = 0}^{\K-1} \mathcal{A} \left ( \sum_{k=0}^{\ell} \rho_{\T}^{k+1}, \rho_{\T}^{\ell+1} \right)  \geq C \left\| \tau\sum_{k=0}^{\K-1} \rho_{\T}^{k+1} \right \|_{\tilde H^s(\Omega)}^2,
\] 
with $C>0$, and the assumption $\usf_0 \in \tilde{H}^{\vartheta+1/2-\epsilon}(\Omega)$, to obtain
\begin{equation*}
 \| \rho_{\T}^{\tau} \|_{\ell^2(L^2(\Omega))} 
 \lesssim h_{\T}^{\vartheta+1/2-\epsilon} \| \usf_0 \|_{H^{\vartheta+1/2-\epsilon}(\R^n)} + \| \theta^{\tau} \|_{\ell^2(L^2(\Omega))} .
\end{equation*}
Invoke \eqref{eq:estimate_for_theta} to finalize the estimate for $ \| \rho_{\T}^{\tau} \|_{\ell^2(L^2(\Omega))}$. This concludes the proof.}
\end{proof}
 
We collect the estimates of Theorems \ref{thm:rate_state_1} and \ref{thm:rate_state_2} to derive a $L^2(Q)$-error estimate for the fully discrete scheme \eqref{eq:initial_data_discrete}--\eqref{eq:fully_scheme}. \EO{To simply the presentation, we define
\begin{equation}
\EO{\mathfrak{A} (\usf_0,\fsf):= \| \usf_0 \|_{H^{\vartheta+1/2-\epsilon}(\mathbb{R}^n)} + \| \fsf \|_{L^{2}(0,T;H^{1/2-s-\epsilon}(\Omega))} + \| \fsf \|_{L^{\infty}(0,T;L^{2}(\Omega))}.}
\label{eq:frakA}
\end{equation}

\begin{theorem}[error estimate for fully discrete scheme]
\label{thm:rate_state_final}
Let $s \in (0,1)$ and $\Omega$ be a domain such that $\partial \Omega \in C^{\infty}$.
Let $\usf$ and $U^{\tau}_{\T}$ solve \eqref{eq:weak_formulation} and \eqref{eq:fully_scheme}, respectively. If $\usf_0 \in \tilde H^{\vartheta+1/2-\epsilon}(\Omega)$ and $\fsf \in L^{2}(0,T;H^{1/2-s-\epsilon}(\Omega)) \cap L^{\infty}(0,T;L^2(\Omega))$, then
 \begin{equation*}
 \label{eq:fully_scheme_error_estimate}
   \| \usf - U^{\tau}_{\T} \|_{L^2(Q) }  \lesssim  (\tau + h_{\T}^{\vartheta + 1/2 - \epsilon})
\mathfrak{A} (\usf_0,\fsf),
 \end{equation*}
where  $\vartheta = \min \{s,1/2-\epsilon\}$ and $\epsilon > 0$ is arbitrarily small. 
The hidden constant does not depend on $h_{\T}$, $\tau$, $\usf$, $U_{\T}^{\tau}$, or the problem data.
\end{theorem}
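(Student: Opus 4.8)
The plan is to prove this by a triangle inequality that inserts the semidiscrete solution $U^\tau$ between the continuous solution $\usf$ and the fully discrete solution $U^\tau_\T$, thereby reducing the statement to the two error estimates already established in Theorems~\ref{thm:rate_state_1} and~\ref{thm:rate_state_2}. Throughout I would invoke the identification of the sequences $U^\tau$ and $U^\tau_\T$ with piecewise constant in time functions described in Remark~\ref{rk:indentification}, so that $\| U^\tau - U^\tau_\T \|_{L^2(Q)} = \| U^\tau - U^\tau_\T \|_{\ell^2(L^2(\Omega))}$ and so that the $L^2(Q)$ norm of $\usf - U^\tau$ coincides with the quantity estimated in Theorem~\ref{thm:rate_state_1}.

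First I would write
\[
\| \usf - U^\tau_\T \|_{L^2(Q)} \leq \| \usf - U^\tau \|_{L^2(Q)} + \| U^\tau - U^\tau_\T \|_{L^2(Q)}
\]
and treat the two summands separately. Before invoking the auxiliary results I would check that the hypotheses of the present theorem imply those of both. Since $\vartheta = \min\{s,1/2-\epsilon\}$, one has $\vartheta + \tfrac12 - \epsilon \geq s$ whenever $\epsilon$ is small enough (for $s > \tfrac12 - \epsilon$ this reads $1 - 2\epsilon \geq s$, which holds once $\epsilon < (1-s)/2$); hence $\usf_0 \in \tilde H^{\vartheta+1/2-\epsilon}(\Omega) \hookrightarrow \tilde H^s(\Omega)$, and together with $\fsf \in L^{\infty}(0,T;L^2(\Omega))$ this places us in the setting of Theorem~\ref{thm:rate_state_1}. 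The assumptions $\usf_0 \in \tilde H^{\vartheta+1/2-\epsilon}(\Omega)$ and $\fsf \in L^{2}(0,T;H^{1/2-s-\epsilon}(\Omega))$ are precisely those required by Theorem~\ref{thm:rate_state_2}.

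With the hypotheses verified, the semidiscrete estimate \eqref{eq:semi_discrete_error_estimate} controls the first summand by $\tau\bigl( |\usf_0|_{H^s(\mathbb{R}^n)} + \|\fsf\|_{L^{\infty}(0,T;L^2(\Omega))} \bigr)$, while Theorem~\ref{thm:rate_state_2} controls the second by $h_\T^{\vartheta+1/2-\epsilon}\bigl( \|\usf_0\|_{H^{\vartheta+1/2-\epsilon}(\mathbb{R}^n)} + \|\fsf\|_{L^{2}(0,T;H^{1/2-s-\epsilon}(\Omega))} \bigr)$. Adding the two bounds and absorbing every norm on the right-hand side into the quantity $\mathfrak{A}(\usf_0,\fsf)$ defined in \eqref{eq:frakA}---which by construction dominates $|\usf_0|_{H^s(\mathbb{R}^n)}$, $\|\usf_0\|_{H^{\vartheta+1/2-\epsilon}(\mathbb{R}^n)}$, $\|\fsf\|_{L^{\infty}(0,T;L^2(\Omega))}$, and $\|\fsf\|_{L^{2}(0,T;H^{1/2-s-\epsilon}(\Omega))}$---yields the claimed bound $(\tau + h_\T^{\vartheta+1/2-\epsilon})\,\mathfrak{A}(\usf_0,\fsf)$.

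The argument is essentially bookkeeping once the two auxiliary theorems are in hand; the only genuine point of care is the norm identification of Remark~\ref{rk:indentification}, which must be applied consistently so that the $\ell^2(L^2(\Omega))$ estimate of Theorem~\ref{thm:rate_state_2} and the $L^2(0,T;L^2(\Omega))$ estimate of Theorem~\ref{thm:rate_state_1} combine into a single $L^2(Q)$ bound. I do not anticipate a substantive obstacle beyond checking that $\epsilon$ can be chosen small enough, depending only on $s$, for the embedding $\tilde H^{\vartheta+1/2-\epsilon}(\Omega) \hookrightarrow \tilde H^s(\Omega)$ to hold.
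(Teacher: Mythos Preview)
Your proposal is correct and matches the paper's approach exactly: the paper does not give a separate proof but simply states that one ``collects the estimates of Theorems~\ref{thm:rate_state_1} and~\ref{thm:rate_state_2}'', which is precisely the triangle-inequality splitting you describe. Your additional care in verifying the hypothesis inclusions and the norm identification from Remark~\ref{rk:indentification} is appropriate and fills in details the paper leaves implicit.
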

}
\begin{remark}[error estimate]\rm
\EO{If $s < \tfrac{1}{2}$, the estimate of Theorem \ref{thm:rate_state_final} reads as
\[ 
\| \usf - U^{\tau}_{\T} \|_{L^2(Q) } \lesssim \tau + h_{\T}^{s+1/2-\epsilon}.
\]
This error estimate is in agreement with respect to regularity; see Theorem \ref{th:regularity_estimate}. In contrast, when $s \geq \tfrac{1}{2}$ the error estimate is suboptimal: $\| \usf - U^{\tau}_{\T} \|_{L^2(Q) } \lesssim \tau + h_{\T}^{1-\epsilon}$; the responsible being the duality argument employed in the proof of Proposition \ref{pro:L2_elliptic}.}
\end{remark}
 
\section{Approximation of parabolic fractional control problem}
\label{sec:approximation_control}

In this section, we introduce an implicit fully-discrete scheme to approximate the solution of the fractional optimal control problem \eqref{eq:min}--\eqref{eq:cc}. The scheme discretizes the control variable with piecewise constant functions. The state variable is discretized with standard piecewise linear finite elements in space, as detailed in section \ref{sec:space_discretization}, and with the backward Euler scheme in time, as described in section \ref{sec:time_discretization}.

To simplify the exposition, in what follows we assume that $\asf$ and $\bsf$ are constants.

\subsection{An implicit fully discrete-scheme}
\label{sub:fd_control}
To discretize the control variable, we introduce the finite element space of piecewise constant functions over $\T$,
\[
\mathbb{Z}(\T) = \left\{ Z \in L^\infty(\Omega): Z|_K \in \mathbb{P}_0(K) \ \forall K \in \T \right\},
\]
and the space of piecewise constant functions in time and space,
\begin{equation}
\mathbb{Z}(\mathcal{T},\T) = \left\{ Z^\tau \subset L^\infty(Q) : Z^k \in \mathbb{Z}(\T) \right\}.
\end{equation}
The space of discrete admissible controls is defined as 
$
 \Zad(\mathcal{T},\T) = \Zad \cap \mathbb{Z}(\mathcal{T},\T),
$
where $\Zad$ is defined in \eqref{ac}. 

To perform an a priori error analysis, it is useful to introduce the $L^2(Q)$-orthogonal projection onto $\mathbb{Z}(\calT,\T)$. This operator, $\Pi_{\T}^{\mathcal{T}}: L^2(Q) \rightarrow \mathbb{Z}(\mathcal{T},\T)$, is defined by
\begin{equation}
\label{eq:or_pro}
\zsf \in L^2(Q): \quad  (\zsf  -  \Pi^{\mathcal{T}}_{\T}  \zsf , Z)_{L^2(Q)} = 0 \qquad \forall Z \in \mathbb{Z}(\mathcal{T},\T).
\end{equation}
If $\zsf  \in H^1(0,T;L^2(\Omega)) \cap L^2(0,T;H^{\kappa}(\Omega))$, \EO{with $\kappa \in (0,1]$,} we have the error estimate
\begin{equation}
 \label{eq:or_pro_prop}
 \| \zsf  - \Pi^{\mathcal{T}}_{\T}\zsf  \|_{L^2(Q)} \lesssim h_{\T}^{\kappa} \| \zsf  \|_{L^2(0,T; H^{\kappa}(\Omega))} + \tau \| \partial_t \zsf  \|_{L^2(Q)}.
\end{equation}
An important observation in favor of $\Pi_{\T}^{\mathcal{T}}$ is that $\Pi_{\T}^\calT \Zad \subset \Zad(\calT, \T)$. We recall that $\asf$ and $\bsf$, that define the set \eqref{ac}, are constant.

We define the discrete functional \EO{$J_{\T}^{\calT} : \V(\T)^\K \times \mathbb{Z}(\mathcal{T}, \T) \to \R$} by
\[
J_{\T}^{\calT}(U^\tau_{\T},Z_{\T}^{\tau}) = \frac{1}{2} \| U_{\T}^{\tau} - \usf_d^{\tau}\|^2_{\ell^2(L^2(\Omega))} +  \frac{\mu}{2}\| Z^{\tau}_{\T} \|^2_{\ell^2(L^2(\Omega))}.
\]
Notice that, if $\usf_d^\tau = \usf_d$ we would have that $J_{\T}^{\calT}(w,r) = J(w,r)$ whenever $w^\tau = w$ and $r^\tau = r$; see Remark \ref{rk:indentification}.

With this notation at hand, we introduce the following fully discrete scheme for our parabolic fractional optimal control problem: \EO{Find
$
\min J_{\T}^{\calT}(U_{\T}^\tau,Z_{\T}^\tau)
$
subject} to the discrete equation: initialize as in \eqref{eq:initial_data_discrete} and, for $k=0,\dots,\K-1$,
\begin{equation}
\label{fully_state}
\EO{U_{\T}^{k+1} \in \V(\T):}
\quad
( \mathfrak{d}  U_{\T}^{k+1} , W )_{L^2(\Omega)}  + \mathcal{A}(U_{\T}^{k+1},W) = \left\langle \fsf^{k+1} + Z_{\T}^{k+1}, W   \right\rangle,
\end{equation}
for all $W \in \V(\T)$, and the control constraints $Z^{\tau}_{\T} \in \Zad(\calT, \T)$. 

\subsection{First order optimality conditions}
We provide first order necessary and sufficient optimality conditions. To accomplish this task, we define the discrete adjoint problem: Find $P_{\T}^{\tau} \subset \V(\T)$ such that $P_{\T}^{\K} = 0$ and, for $k = \K-1, \ldots, 0$,
\begin{equation}
\label{fully_adjoint}
\EO{P_{\T}^k \in \V(\T):}
\quad
( \bar{\mathfrak{d}} P_{\T}^{k} , W )_{L^2(\Omega)}  + \mathcal{A}(P_{\T}^k,W) = \langle U_{\T}^{k+1} - \usf_d^{k+1}, W \rangle
\end{equation} 
for all $W \in \V(\T)$. The difference operator $\bar{\mathfrak{d}}$ is defined in \eqref{1_discrete_forward}.

The optimality condition reads: $( \bar{U}^{\tau}_{\T}, \bar{Z}^{\tau}_{\T})$ is optimal for the scheme of section \ref{sub:fd_control} if and only if $\bar{U}^{0}_{\T} = P_{\T} \usf_0$, for $k = 0, \cdots, \mathcal{K}-1$, $\bar U_{\T}^{k+1} \in \V(\T)$ solves \eqref{fully_state}, and 
 \begin{equation}
 \label{eq:op_discrete}
  ( \mu \bar{Z}^{\tau}_{\T} + \bar{P}_{\T}^{\tau}, Z -\bar{Z}^{\tau}_{\T})_{L^2(Q)} \geq 0 \quad 
  \forall Z \in \Zad(\mathcal{T},\T),
 \end{equation}
 where $\bar{P}^{\tau}_{\T}$ solves \eqref{fully_adjoint}. Set $Z^\tau = Z \chi_{(t_{k-1},t_k]}$ with $Z \in \mathbb{Z}(\T_\Omega)$ and $\asf \leq Z \leq \bsf$ in \eqref{eq:op_discrete}. We thus obtain that \eqref{eq:op_discrete} can be equivalently written as
 \[
   ( \bar P_{\T}^k + \mu \bar{Z}^k_{\T}, Z - \bar{Z}^k_{\T} )_{L^2(\Omega)} \geq0 \quad \forall Z \in \mathbb{Z}(\T), \quad \asf \leq Z \leq \bsf,
   \quad \forall k = 1,\ldots,\K.
 \]
 
\subsection{Auxiliary problems}
\label{sub:auxiliary_problemsl}

We introduce two auxiliary problems that will be instrumental to derive error estimates for the fully discrete scheme of section \ref{sub:fd_control}.

The first problem reads as follows: Find $Q_{\T}^{\tau} \subset \V(\T)$ such that $Q_{\T}^{\K} = 0$ and, for $k = \K-1, \ldots, 0$, $Q_{\T}^k \in \V(\T)$ solves
 \begin{equation}
 \label{adjoint_aux1}
  ( \bar{\mathfrak{d}} Q_{\T}^{k} , W )_{L^2(\Omega)}  + \mathcal{A}(Q_{\T}^{k},W) = \left\langle  \bar{\usf}^{k+1} - \usf_d^{k+1}, W   \right\rangle
 \end{equation}
 for all $W \in \V(\T)$; $\bar{\usf} = \bar{\usf}(\bar \zsf)$ denotes the solution to \eqref{eq:weak_formulation} with $\zsf$ replaced by $\ozsf$.
 
The second auxiliary problem is:  Find $R_{\T}^{\tau} \subset \V(\T)$ such that $R_{\T}^{\K} = 0$ and, for $k = \K-1, \ldots, 0$, $R_{\T}^k \in \V(\T)$ solves
 \begin{equation}
 \label{adjoint_aux2}
  ( \bar{\mathfrak{d}} R_{\T}^{k} , W )_{L^2(\Omega)}  + \mathcal{A}(R_{\T}^{k},W) = \langle  U^{k+1}_{\T}(\bar \zsf) - \usf_d^{k+1}, W   \rangle
 \end{equation}
 for all $W \in \V(\T)$; $U^{k+1}_{\T}(\bar \zsf)$ denotes the solution to \eqref{fully_state} with $Z_{\T}^{k+1}$ replaced by $\bar \zsf^{k+1}$.
 
\subsection{A priori error analysis: $s \in (0,1)$}
\label{sub:apriori_control}
 
\EO{We derive an a priori error estimate for the error approximation of the control variable.}
 
 \begin{theorem}[error estimate for control approximation]
 \label{thm:control_error}
\EO{Let $s \in (0,1)$ and $\Omega$ be a domain such that $\partial \Omega \in C^{\infty}$. Let $\bar{\zsf}$ be the optimal control for \eqref{eq:min}--\eqref{eq:cc} and let $\bar{Z}^{\tau}_{\T}$ be the optimal control for the scheme of section \ref{sub:fd_control}. If $\usf_0 \in \tilde H^{\beta}(\Omega)$ and $\fsf, \usf_d \in L^{\infty}(0,T;L^2(\Omega)) \cap L^2(0,T;H^{\beta}(\Omega))$, for every $\beta<1$, then
 \begin{equation}
 \| \bar{\zsf} - \bar{Z}^{\tau}_{\T}  \|_{L^2(Q)} \lesssim \tau + h_{\T}^{\gamma},
 \label{eq:final_error_estimate_control}
 \end{equation}
where $\gamma = \min \{1,s + 1/2 - \epsilon \}$ and $\epsilon > 0$ is arbitrarily small. The hidden constant is independent of the optimal continuous and discrete variables and the discretization parameters, but depends on the problem data.}
\end{theorem}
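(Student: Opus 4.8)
The plan is to compare the continuous and discrete first-order optimality conditions and to absorb the control error into the coercivity furnished by the Tikhonov term $\mu>0$. Write $e:=\bar{\zsf}-\bar{Z}^{\tau}_{\T}$. Since $\bar{Z}^{\tau}_{\T}\in\Zad(\mathcal{T},\T)\subset\Zad$, I would test the continuous variational inequality \eqref{viz} with $\zsf=\bar{Z}^{\tau}_{\T}$; and since $\Pi^{\mathcal{T}}_{\T}\bar{\zsf}\in\Zad(\mathcal{T},\T)$ (recall $\Pi^{\mathcal{T}}_{\T}\Zad\subset\Zad(\mathcal{T},\T)$, because $\asf,\bsf$ are constant), I would test the discrete inequality \eqref{eq:op_discrete} with $Z=\Pi^{\mathcal{T}}_{\T}\bar{\zsf}$. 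Adding the two, writing $\Pi^{\mathcal{T}}_{\T}\bar{\zsf}-\bar{Z}^{\tau}_{\T}=(\Pi^{\mathcal{T}}_{\T}\bar{\zsf}-\bar{\zsf})+e$, and using $(\mu\bar{Z}^{\tau}_{\T}-\mu\bar{\zsf},e)_{L^2(Q)}=-\mu\|e\|^2_{L^2(Q)}$ yields
\[
\mu\|e\|^2_{L^2(Q)}\leq (\bar{P}^{\tau}_{\T}-\bar{\psf},e)_{L^2(Q)}+(\bar{P}^{\tau}_{\T},\Pi^{\mathcal{T}}_{\T}\bar{\zsf}-\bar{\zsf})_{L^2(Q)},
\]
where the orthogonality \eqref{eq:or_pro} kills $(\mu\bar{Z}^{\tau}_{\T},\Pi^{\mathcal{T}}_{\T}\bar{\zsf}-\bar{\zsf})_{L^2(Q)}=0$. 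I would handle the last term by splitting $(\bar{P}^{\tau}_{\T},\Pi^{\mathcal{T}}_{\T}\bar{\zsf}-\bar{\zsf})=(\bar{P}^{\tau}_{\T}-\bar{\psf},\Pi^{\mathcal{T}}_{\T}\bar{\zsf}-\bar{\zsf})+(\bar{\psf}-\Pi^{\mathcal{T}}_{\T}\bar{\psf},\Pi^{\mathcal{T}}_{\T}\bar{\zsf}-\bar{\zsf})$ and invoking \eqref{eq:or_pro_prop} for both $\bar{\zsf}$ and $\bar{\psf}$; by the time and space regularity of Theorems \ref{th:regularity_time} and \ref{thm:regularity_space} these contribute at most $(\tau+h_{\T}^{\gamma})^2$, modulo $\|e\|_{L^2(Q)}$-terms absorbable by Young's inequality.

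The crux is the term $(\bar{P}^{\tau}_{\T}-\bar{\psf},e)_{L^2(Q)}$, which I would control through the telescoping decomposition furnished by the auxiliary problems of section \ref{sub:auxiliary_problemsl}:
\[
\bar{P}^{\tau}_{\T}-\bar{\psf}=(\bar{P}^{\tau}_{\T}-R^{\tau}_{\T})+(R^{\tau}_{\T}-Q^{\tau}_{\T})+(Q^{\tau}_{\T}-\bar{\psf}).
\]
The first difference $\bar{P}^{\tau}_{\T}-R^{\tau}_{\T}$ solves the discrete adjoint equation with right-hand side $\bar{U}^{\tau}_{\T}-U^{\tau}_{\T}(\bar{\zsf})$, which is itself the discrete state difference driven by the control error $e$. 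The main obstacle is to show that this contribution has a favorable sign: replicating, at the discrete level, the integration-by-parts identity of Lemma \ref{le:identity} through summation by parts in time (the boundary terms vanish since the state difference starts at $0$ and the adjoint difference ends at $0$) should give $(R^{\tau}_{\T}-\bar{P}^{\tau}_{\T},e)_{L^2(Q)}=\|U^{\tau}_{\T}(\bar{\zsf})-\bar{U}^{\tau}_{\T}\|^2_{\ell^2(L^2(\Omega))}\geq 0$, so that $(\bar{P}^{\tau}_{\T}-R^{\tau}_{\T},e)_{L^2(Q)}\leq 0$ and may simply be discarded. The delicate bookkeeping here is to check that the discrete summation by parts is consistent with the time-averaging of the control that is hidden in the piecewise-constant identification of Remark \ref{rk:indentification}.

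For the remaining two differences I would rely on the discrete stability of Theorem \ref{thm:stab}. The difference $R^{\tau}_{\T}-Q^{\tau}_{\T}$ solves the discrete adjoint equation with right-hand side $U^{\tau}_{\T}(\bar{\zsf})-\bar{\usf}$, i.e., the fully discrete state error for the control $\bar{\zsf}$; hence $\|R^{\tau}_{\T}-Q^{\tau}_{\T}\|_{L^2(Q)}\lesssim \|\bar{\usf}-U^{\tau}_{\T}(\bar{\zsf})\|_{L^2(Q)}\lesssim \tau+h_{\T}^{\gamma}$ by Theorem \ref{thm:rate_state_final} applied to \eqref{eq:weak_formulation} with datum $\fsf+\bar{\zsf}$, whose required regularity follows from Theorem \ref{thm:regularity_space} (note $\bar{\usf}\in L^2(0,T;H^{s+1/2-\epsilon}(\Omega))$ and $\bar{\zsf}\in L^2(0,T;H^{\gamma}(\Omega))$, while $\vartheta+1/2-\epsilon=\gamma$ up to the $\epsilon$). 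The difference $Q^{\tau}_{\T}-\bar{\psf}$ is exactly the fully discrete error for the adjoint equation \eqref{fractional_adjoint} with datum $\bar{\usf}-\usf_d$; since this datum lies in $L^2(0,T;H^{1/2-s-\epsilon}(\Omega))\cap L^{\infty}(0,T;L^2(\Omega))$ by the regularity of $\bar{\usf}$ and the hypotheses on $\usf_d$, applying Theorem \ref{thm:rate_state_final} (after reversing time) yields $\|Q^{\tau}_{\T}-\bar{\psf}\|_{L^2(Q)}\lesssim \tau+h_{\T}^{\gamma}$. Pairing each of these with $e$ via Cauchy--Schwarz gives contributions $\lesssim (\tau+h_{\T}^{\gamma})\|e\|_{L^2(Q)}$.

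Collecting all the bounds into the inequality for $\mu\|e\|^2_{L^2(Q)}$, discarding the sign-definite term, and using Young's inequality to absorb the resulting $\tfrac{\mu}{2}\|e\|^2_{L^2(Q)}$ contributions on the left-hand side, I would arrive at $\tfrac{\mu}{2}\|e\|^2_{L^2(Q)}\lesssim (\tau+h_{\T}^{\gamma})^2$. Taking square roots then produces the desired estimate \eqref{eq:final_error_estimate_control} with $\gamma=\min\{1,s+1/2-\epsilon\}$. I expect the only genuinely subtle point to be the discrete adjoint identity of the second paragraph; the rest reduces to repeated use of the already-established state error estimate and the regularity of the optimal variables.
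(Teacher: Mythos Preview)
Your proposal is correct and follows essentially the same route as the paper's proof: both test the continuous and discrete variational inequalities with $\bar{Z}^{\tau}_{\T}$ and $\Pi^{\mathcal{T}}_{\T}\bar{\zsf}$, decompose $\bar{P}^{\tau}_{\T}-\bar{\psf}$ through the auxiliary adjoints $Q^{\tau}_{\T}$ and $R^{\tau}_{\T}$, discard $(\bar{P}^{\tau}_{\T}-R^{\tau}_{\T},e)$ via discrete summation by parts, bound the remaining adjoint differences by combining discrete stability (Theorem~\ref{thm:stab}) with the fully discrete state error (Theorem~\ref{thm:rate_state_final}), and control the projection residual using \eqref{eq:or_pro_prop} together with the regularity of Theorems~\ref{th:regularity_time} and~\ref{thm:regularity_space}. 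The only cosmetic differences are that you invoke the orthogonality \eqref{eq:or_pro} a step earlier to drop $(\mu\bar{Z}^{\tau}_{\T},\Pi^{\mathcal{T}}_{\T}\bar{\zsf}-\bar{\zsf})$ outright, and you split the remaining projection term as $(\bar{P}^{\tau}_{\T}-\bar{\psf},\cdot)+(\bar{\psf}-\Pi^{\mathcal{T}}_{\T}\bar{\psf},\cdot)$ rather than the paper's four-term decomposition in its Step~5; these lead to the same bounds.
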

\begin{proof} 
We proceed in several steps.

\noindent \emph{Step 1.} Set $\zsf = \bar{Z}^{\tau}_{\T}$ in \eqref{viz} and $Z = \Pi^{\mathcal{T}}_{\T} \ozsf$ in \eqref{eq:op_discrete}, where $\Pi^{\mathcal{T}}_{\T}$ denotes the $L^2(Q)$-orthogonal projection onto $\mathbb{Z}(\calT,\T)$. Add the obtained inequalities to arrive at
 \[
  \mu \| \bar{\zsf} - \bar{Z}^{\tau}_{\T}  \|_{L^2(Q)}^2 \leq ( \bar{\psf} - \bar{P}^{\tau}_{\T} ,\bar{Z}^{\tau}_{\T} - \ozsf )_{L^2(Q)} + ( \bar{P}^{\tau}_{\T} + \mu \bar{Z}^{\tau}_{\T},\Pi^{\mathcal{T}}_{\T} \ozsf - \ozsf )_{L^2(Q)}.
 \]
 We recall that the adjoint state $\bar{\psf}$ solves \eqref{fractional_adjoint} with $\usf$ replaced by $\bar \usf$ 
 and its fully discrete counterpart $\bar{P}^{\tau}_{\T}$ is defined as the solution to \eqref{fully_adjoint} with $U_{\T}^{k+1}$ replaced by $\bar U_{\T}^{k+1}$.
\\

 \noindent \emph{Step 2.} We invoke the solutions to the auxiliary problems \eqref{adjoint_aux1} and \eqref{adjoint_aux2} to write $\bar{\psf} - \bar{P}^{\tau}_{\T} = (\bar{\psf} -  Q^{\tau}_{\T}) + (Q^{\tau}_{\T} - R^{\tau}_{\T}) + (R^{\tau}_{\T} - \bar P^{\tau}_{\T})$. Since $Q^{\tau}_{\T}$ solves \eqref{adjoint_aux1}, the estimate for the term $\bar{\psf} -  Q^{\tau}_{\T}$ follows immediately from \EO{Theorem \ref{thm:rate_state_final}:
 \begin{equation}
 \label{eq:p-Q}
 \| \bar{\psf} - Q^{\tau}_{\T} \|_{L^2(Q)} \lesssim \left(\tau + h_{\T}^{\vartheta +1/2 - \epsilon} \right) \mathfrak{A}(0,\bar \usf-\usf_d),
 \end{equation}
where $\mathfrak{A}$ is defined in \eqref{eq:frakA} and $\vartheta = \min \{s, 1/2 - \epsilon \}$ with $\epsilon >0$ being arbitrarily small. In view of the energy estimate \eqref{eq:energy_u} and the regularity results of Theorem \ref{thm:regularity_space} we obtain that  $\bar \usf \in L^2(0,T;H^{s+1/2 - \epsilon}(\Omega)) \cap L^{\infty}(0,T;L^2(\Omega))$ for $\epsilon>0$ arbitrarily small. The assumption on $\usf_d$ thus yields $\mathfrak{A}(0,\bar \usf-\usf_d) < \infty$.}
 \\
 
 \noindent \emph{Step 3.} The goal of this step is to control the difference $Q^{\tau}_{\T} - R^{\tau}_{\T}$. To accomplish this task, we first invoke the stability result of Theorem~\ref{thm:stab} and then the error estimate of Theorem \ref{thm:rate_state_final}. These arguments allow us to obtain
 \begin{equation}
 \| Q^{\tau}_{\T} - R^{\tau}_{\T}\|_{L^2(Q)} \lesssim 
 \| \bar{\usf} - U^{\tau}_{\T}(\ozsf) \|_{L^2(Q)}
 \lesssim \EO{\left(\tau + h_{\T}^{\vartheta +1/2 - \epsilon} \right) \mathfrak{A}(\usf_0, \fsf + \bar \zsf),}
 \label{eq:Q-R}
 \end{equation}
\EO{where the hidden constant is independent of $h_{\T}$ and $\tau$.
The regularity results of Theorem \ref{thm:regularity_space} guarantee that $\bar \zsf \in H^{\gamma}(\Omega)$ where $\gamma = \min \{1,s+1/2-\epsilon \}$ and $\epsilon >0$ is arbitrarily small. Thus, $\bar \zsf \in L^2(0,T; H^{1/2-s-\epsilon}(\Omega)) \cap L^{\infty}(0,T;L^2(\Omega))$. In view of the assumptions on $\fsf$ and $\usf_0$ we can thus conclude that $\mathfrak{A}(\usf_0, \fsf + \bar \zsf) < \infty$.}
\\

  \noindent \emph{Step 4.} We handle the term $R^{\tau}_{\T} - \bar{P}^{\tau}_{\T}$ in view of an argument based on summation by parts.  First, we define
\[
\Psi^{k}: = \bar{P}_{\T}^k - R_{\T}^k, \quad \Phi^{k+1} := \bar{U}_{\T}^{k+1} - U_{\T}^{k+1}(\ozsf).
\]  
\EO{Set} $\Psi^{k}$ and  $\Phi^{k+1}$ in the problems that $\bar{U}_{\T}^{\tau} - U_{\T}^{\tau}(\ozsf)$ and $\bar{P}_{\T}^{\tau} - \bar{R}_{\T}^{\tau}$ solve, respectively. In view of the fact that $\psi^{\K} = 0 = \Phi^{0}$, invoke the discrete summation by parts formula
  \[
  \sum_{k=0}^{\K-1} \tau ( \mathfrak{d}\Phi^{k+1},\psi^k) = -  \sum_{k=0}^{\K-1} \tau (\Phi^{k+1}, \mathfrak{d}\psi^{k+1}) = \sum_{k=0}^{\K-1} \tau (\Phi^{k+1}, \bar{\mathfrak{d}}\psi^{k})
  \]
to conclude that \EO{$
   ( R^{\tau}_{\T} - \bar{P}^{\tau}_{\T}, \bar{Z}^{\tau}_{\T} - \ozsf )_{L^2(Q)} \leq 0$.}
 \\
 
  \noindent \emph{Step 5.} \EO{We now bound} $( \bar{P}^{\tau}_{\T} + \mu \bar{Z}^{\tau}_{\T},\Pi^{\mathcal{T}}_{\T} \ozsf - \ozsf )_{L^2(Q)}$. To accomplish this task, we write
\begin{multline}
  ( \bar{P}^{\tau}_{\T} + \mu \bar{Z}^{\tau}_{\T},\Pi^{\mathcal{T}}_{\T} \ozsf - \ozsf )_{L^2(Q)} 
  = 
  (\bar \psf + \mu \ozsf, \Pi^{\mathcal{T}}_{\T} \ozsf - \ozsf )_{L^2(Q)} 
  + 
  (\bar{P}_{\T}^\tau - Q_{\T}^\tau, \Pi^{\mathcal{T}}_{\T} \ozsf - \ozsf)_{L^2(Q)} 
  \\
  +  (Q_{\T}^\tau - \bar \psf, \Pi^{\mathcal{T}}_{\T} \ozsf - \ozsf)_{L^2(Q)} 
  +  \mu( \bar{Z}_{\T}^\tau - \ozsf,
  \Pi^{\mathcal{T}}_{\T} \ozsf - \ozsf)_{L^2(Q)} = \textrm{I} + \textrm{II} + \textrm{III} + \textrm{IV}.
\end{multline}
We recall that the auxiliary variable $Q_{\T}^\tau$ is defined as the solution to \eqref{adjoint_aux1}. 

To estimate the term $\textrm{I}$ we invoke property \eqref{eq:or_pro}, that defines $\Pi^{\mathcal{T}}_{\T}$, and the estimate \eqref{eq:or_pro_prop}. We can thus obtain, \EO{for $\gamma = \min \{1,s+1/2-\epsilon \}$ and $\epsilon >0$ sufficiently small,}
\begin{multline*}
\textrm{I} = (\bar \psf + \mu \ozsf - \Pi^{\mathcal{T}}_{\T}(\bar \psf + \mu \ozsf), \Pi^{\mathcal{T}}_{\T} \ozsf - \ozsf )_{L^2(Q)}  
\lesssim
\left(h_{\T}^{\gamma} \| \ozsf \|_{L^2(0,T;H^{\gamma}(\Omega))} + \tau \| \partial_t \ozsf  \|_{L^2(Q)}\right)
\\
\cdot \left( h_{\T}^{\gamma} \| \bar \psf + \mu \ozsf   \|_{L^2(0,T;H^{\gamma}(\Omega))} + \tau \| \partial_t (\bar \psf + \mu \ozsf )  \|_{L^2(Q)}\right).
\end{multline*}
Notice that $\|\opsf + \mu \ozsf \|_{L^2(0,T;H^\gamma(\Omega))}$ and $\| \partial_t(\opsf + \mu \ozsf)\|_{L^2(Q)}$ are uniformly controlled by the problem data; see the regularity estimates of Theorems \ref{th:regularity_time} and \ref{thm:regularity_space}.

In what follows we control $\mathrm{II}$. To accomplish this task, we first notice that
\[
 \| \bar{P}_{\T}^\tau - Q_{\T}^\tau \|_{L^2(Q)} \leq  \| \bar{P}_{\T}^\tau - R_{\T}^\tau \|_{L^2(Q)} +  \| R_{\T}^\tau - Q_{\T}^\tau \|_{L^2(Q)},
\]
where the auxiliary variable $R_{\T}^\tau$ is defined as the solution to \eqref{adjoint_aux2}.  The term $\| R_{\T}^\tau - Q_{\T}^\tau \|_{L^2(Q)}$ is bounded as in \eqref{eq:Q-R}. It thus suffices to bound $\| \bar{P}_{\T}^\tau - R_{\T}^\tau \|_{L^2(Q)}$. To do this, we invoke the stability estimate \eqref{eq:stab}, twice, to arrive at
\[
\| \bar{P}_{\T}^\tau - R_{\T}^\tau \|_{L^2(Q)} 
\lesssim 
\| \bar U_{\T}^{\tau} - U_{\T}^{\tau} (\bar \zsf) \|_{L^2(Q)} 
\lesssim 
\| \bar Z_{\T}^{\tau} - \bar \zsf \|_{L^2(Q)}.
\]
We thus obtain that\EO{ $\| \bar{P}_{\T}^\tau - Q_{\T}^\tau \|_{L^2(Q)} \lesssim  (\tau + h_{\T}^{\vartheta +1/2 - \epsilon} ) \mathfrak{A}(\usf_0, \fsf + \bar \zsf) + \| \bar Z_{\T}^{\tau} - \bar \zsf \|_{L^2(Q)}$.} We now invoke the Cauchy--Schwarz inequality,  the previous estimate for $\bar{P}_{\T}^\tau - Q_{\T}^\tau$, the error estimate \eqref{eq:or_pro_prop}, and Young's inequality to arrive at
\begin{multline*}
\mathrm{II} \leq \| \bar{P}_{\T}^\tau - Q_{\T}^\tau \|_{L^2(Q)} \| \Pi^{\mathcal{T}}_{\T} \ozsf - \ozsf  \|_{L^2(Q)} 
\leq
\frac{\mu}{4}\| \bar Z_{\T}^{\tau} - \bar \zsf \|_{L^2(Q)}^2 
+ 
C \left( \EO{\tau^2 \mathfrak{A}^2(\usf_0, \fsf + \bar \zsf)} \right.
\\
+ 
\left.
\EO{h_{\T}^{2(\theta+1/2-\epsilon)} \mathfrak{A}^2(\usf_0, \fsf + \bar \zsf)}
+ 
h_{\T}^{2\gamma} \|\ozsf \|^2_{L^2(0,T;H^{\gamma}(\Omega))} + \tau^2 \| \partial_t \ozsf  \|^2_{L^2(Q)} \right),
\end{multline*}
where $C>0$ and $\vartheta= \min \{s,1/2-\epsilon\}$ with $\epsilon >0$ arbitrarily small.

The control of the term $\mathrm{III}$ follows from \eqref{eq:p-Q} and \eqref{eq:or_pro_prop}. In fact, we have
\begin{equation*}
 \mathrm{III} \lesssim 
 \EO{\left(\tau + h_{\T}^{\vartheta +1/2 - \epsilon} \right) \mathfrak{A}(0,\bar \usf-\usf_d)
\left(h_{\T}^{\gamma} \| \ozsf \|_{L^2(0,T;H^{\gamma}(\Omega)} + \tau \| \partial_t \ozsf  \|_{L^2(Q)}\right).}
\end{equation*}

The term $\mathrm{IV}$ can be bounded in view of similar arguments.

\noindent \emph{Step 6.} The assertion follows from collecting all the estimates we obtained in previous steps. This concludes the proof.
\end{proof} 

\begin{remark}[error estimate]\rm
\EO{If $s>\tfrac12$, \eqref{eq:final_error_estimate_control} reads $ \| \bar{\zsf} - \bar{Z}^{\tau}_{\T}  \|_{L^2(Q)} \lesssim \tau + h_{\T}$. This estimate is \emph{optimal} with respect to approximation. When $s \leq \tfrac12$, it reads
\[
\| \bar{\zsf} - \bar{Z}^{\tau}_{\T}  \|_{L^2(Q)} \lesssim \tau + h_{\T}^{s+1/2-\epsilon},
\]
for $\epsilon>0$ arbitrarily small. This estimate is \emph{suboptimal} in terms of approximation but is in \emph{agreement} with the regularity results derived in Theorem \ref{thm:regularity_space} for $\bar \zsf$.}
\end{remark}

\section{Numerical examples}
\label{sec:numerical-examples}

We present a series of numerical examples that illustrate the performance of the fully discrete scheme proposed in section \ref{sub:fd_control} when solving the optimal control problem \eqref{eq:min}--\eqref{eq:cc}. We consider one- and two-dimensional numerical experiments posed on the domain $B(0,1) \times (0,T)$, where $B(0,1)$ denotes the interval $(0,1)$, when $n=1$, and the circle of radius $1$ centered at $(0,0)$, when $n=2$.

\subsection{Exact solutions}
We let $n \in \{1,2\}$, $\Omega = B(0,1)$, and $s \in (0,1)$. We consider the fractional Poisson problem: Find $u$ such that
\begin{equation}
  \left(-\Delta\right)^{s}u = f \text{ in }\Omega, \quad
  u=0\text{ in }\Omega^{c}.
  \label{eq:fractional_Poisson}
\end{equation}

\EO{Let \(P_{k}^{(\alpha,\beta)}\) denote the Jacobi polynomials and \(x_{+}=\max\{0,x\}\). If \(n=1\) and $f$ is}
\begin{equation*}
  f_{k,0}^{1D}(x)= 2^{2s}\Gamma\left(1+s\right)^{2}\binom{s+k-1/2}{s}\binom{s+k}{s} P_{k}^{(s,-1/2)}\left(2x^{2}-1\right), \quad k\geq0,
\end{equation*}
then the solution $u$ is given \EO{by
$
  u_{k,0}^{1D}(x)= P_{k}^{(s,-1/2)}\left(2x^{2}-1\right) \left(1-x^{2}\right)^{s}_{+},
$
where}
\[
\binom{x}{y}=\frac{\Gamma\left(x+1\right)}{\Gamma\left(y+1\right)\Gamma\left(x-y+1\right)} 
\]
correspond to the generalized binomial coefficients.
When the right-hand side is
\begin{equation*}
  f_{k,1}^{1D}(x)= 2^{2s}\Gamma\left(1+s\right)^{2}\binom{s+k+1/2}{s}\binom{s+k}{s} x P_{k}^{(s,1/2)}\left(2x^{2}-1\right), \quad k\geq0,
\end{equation*}
\EO{then
$
  u_{k,1}^{1D}(x)= x P_{k}^{(s,1/2)}\left(2x^{2}-1\right) \left(1-x^{2}\right)^{s}_{+}.
$}

If \(n=2\) and the right-hand side, in polar coordinates, reads
\begin{align*}
  f_{k,\ell}^{2D}(r,\theta)
  &= 2^{2s}\Gamma\left(1+s\right)^{2}\binom{s+k+\ell}{s}\binom{s+k}{s} r^{\ell}\cos\left(\ell\theta\right) P_{k}^{(s,\ell)}\left(2r^{2}-1\right),
  &\ell,k\geq0,
\end{align*}
then
$
  u_{k,\ell}^{2D}(r,\theta) = r^{\ell}\cos\left(\ell\theta\right) P_{k}^{(s,\ell)}\left(2r^{2}-1\right)\left(1-r^{2}\right)^{s}_{+}.
$

We refer the reader to \cite{DydaKuznetsovEtAl2016_FractionalLaplaceOperatorMeijerGFunction} for details on how these 
solutions are determined.

We now construct analytic solutions to the parabolic fractional optimal control problem.
Let \(\psi, \phi \) be smooth functions on $(0,T)$ such that \(\psi(0)=1\) and \(\phi(T)=0\).
Let $f, g$ be smooth functions on $\Omega$ and \(u\) and \(v\) be the solutions to the fractional Poisson problem \eqref{eq:fractional_Poisson} with right-hand sides \(f\) and \(g\), respectively. Set
\begin{align*}
  \fsf\left(t,x\right) &= \psi'(t) u(x) + \psi(t) f(x) - \operatorname{proj}_{[\asf,\bsf]}\left(\phi(t) v(x)\right), \\
  \usf_{d}\left(t,x\right) &= \psi(t)u(x) + \mu\phi'(t)v(x) + \mu\phi(t)g(x),
\end{align*}
and $\usf_{0}\left(x\right) = u(x)$.
\EO{The exact solution to the optimal control problem is given by
$
  \bar \usf(t,x) = \psi(t)u(x),
$
$
\bar \psf(t,x) = -\mu\phi(t)v(x)
$
and  $\bar \zsf(t,x) = \proj_{[\asf,\bsf]}\left(\phi(t)v(x)\right)$.
Notice that $\bar \usf $, $\bar \psf$, and $\bar \zsf$ verify the regularity results of Theorems \ref{th:regularity_time} and \ref{thm:regularity_space}.}

\subsection{Implementation details}
In what follows, we employ the panel clustering approach described in~\cite{AinsworthGlusa2018_TowardsEfficientFiniteElement} to obtain a sparse approximation of the integral fractional Laplacian \(\left(-\Delta\right)^{s}\). For the minimization problem we use the BFGS algorithm~\cite{NocedalWright2006_NumericalOptimization2nd}.
The linear systems of equations arising from the fully discrete scheme from section \ref{sub:fd_control} are solved using conjugate gradient preconditioned by geometric multigrid.

The \(L^{2}\left(Q\right)\)-error of approximating the variable \(\bar \wsf\) by the discrete function \(\bar W^{\tau}_{\T}\) is approximated as follows:
\begin{align*}
  \|\bar \wsf - \bar W^{\tau}_{\T}\|_{L^{2}(Q)}^{2} &= \int_{0}^{T} \int_{\Omega} \left( \bar \wsf^{2} - 2 \bar \wsf \bar W^{\tau}_{\T} + \left(\bar W^{\tau}_{\T}\right)^{2} \right) \diff x \diff t
   \\
  &\approx \EO{\int_{0}^{T} \int_{\Omega} \bar \wsf^{2} \diff x \diff t+ \sum_{k=1}^{\K}
  \int_{\Omega} \bar W^{k}_{\T}\left(\bar W^{k}_{\T}-2\bar \wsf (t_k) \right).}
\end{align*}
Notice that the first term can be evaluated analytically.

\subsection{Example in 1D}

We set \(\Omega= (0,1) \subset \mathbb{R}\), \(T=1\), \(\asf =-0.5\), \(\bsf =0.5\), \(\mu=0.1\), \(u=u_{0}^{1D}\), \(v=u_{0}^{1D}\), \(\psi(t)=\cos(t)\), and \(\phi(t)=\sin(T-t)\). 
The exact solution reads:
\[
  \bar \usf(t,x) = \cos(t)u_{0}^{1D}(x), 
  \qquad
  \bar \psf(t,x) = -\mu\sin(T-t)u_{0}^{1D}(x),
  \]
and
\begin{equation*}       
  \bar \zsf(t,x) = \proj_{[\asf,\bsf]}\left(\sin(T-t)u_{0}^{1D}(x)\right) =
                   \begin{cases}
     b & \text{if }\abs{x}<r_{o}(t), \\
     (1-x^{2})^{s} & \text{if }\abs{x}\geq r_{o}(t),
     \end{cases}
\end{equation*}
where
\begin{align*}
  r_{o}(t)&=
            \begin{cases}
              0 & \text{if }\sin(T-t)<b, \\
              \sqrt{1-\left(\frac{b}{\sin(T-t)}\right)^{1/s}} & \text{if }\sin(T-t)\geq b.
            \end{cases}
\end{align*}
We also set \(\tau=h_{\T}^{\gamma}\), where \(\gamma=\min\{1,s+1/2-\epsilon\}\) and $\epsilon >0$ is arbitrarily small.


In Figure~\ref{fig:error1D} we display the experimental rates of convergence for the \(L^{2}(Q)\)-errors of the state and control variables.
We consider different values for the fractional order \(s\in\{0.1,0.2,\dots,0.9\}\).
\EO{We observe that the experimental rates of convergence for the error approximation of the control variable are in agreement with the error estimate of Theorem 
\(\| \bar{\zsf} - \bar{Z}^{\tau}_{\T}  \|_{L^2(Q)} \sim h_{\T}^{\gamma}\) with \(\gamma=\min\{1,s+1/2-\epsilon\}\).}

\begin{figure}
  \centering
  \includegraphics{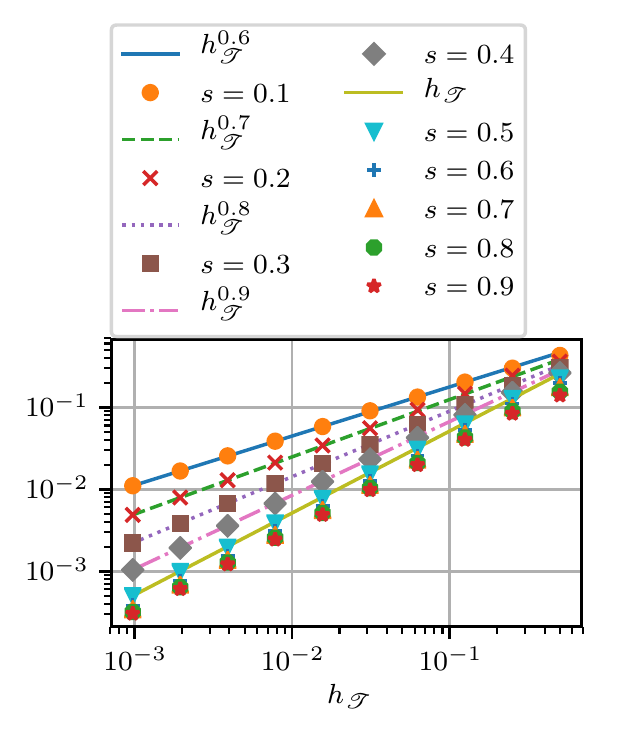}
  \includegraphics{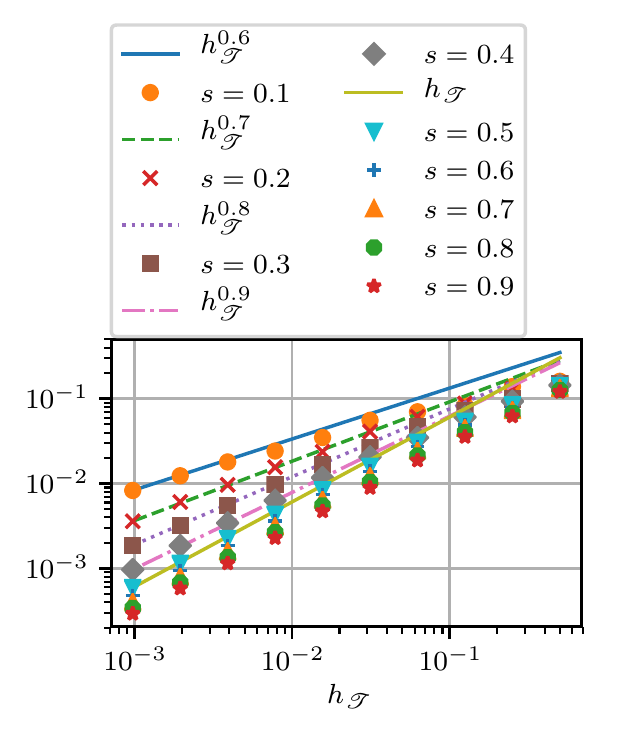}
  \caption{1D example: Experimental rates of convergence for \(\|\bar \usf - \bar U^{\tau}_{\T}\|_{L^{2}(Q)}\) (left) and \(\| \bar{\zsf} - \bar{Z}^{\tau}_{\T}  \|_{L^2(Q)}\) (right) for $\Omega = (0,1)$ and \(s\in\{0.1, 0.2, \dots, 0.9\}\).}
  \label{fig:error1D}
\end{figure}

\subsection{Examples in 2D}

\EO{We set \(\Omega=B(0,1)\subset \mathbb{R}^{2}\), \(T=1\), \(\asf =-0.5\), \(\bsf=0.5\) and \(\mu=0.1\).
We consider two problems:
\begin{itemize}
\item[(I)]
  Set \(u=u_{0,1}^{2D}\), \(v=u_{0,0}^{2D}\), \(\psi(t)=\cos(t)\), and \(\phi(t)=\sin(T-t)\).
  The exact solution to the fractional optimal control problem is then given by
  \begin{equation*}
    \bar \usf(t,x) = \cos(t)u_{0,1}^{2D}(x),
    \quad
    \bar \psf(t,x) = -\mu\sin(T-t)u_{0,0}^{2D}(x),
  \end{equation*}
  and $\bar \zsf(t,x) = \proj_{[\asf,\bsf]}\left(\sin(T-t)u_{0,0}^{2D}(x)\right)$.

\item[(II)]
  Let \(\fsf(t,x)=\cos(t)\), \(\usf_{d}\left(t,x\right)=\cos(t)(1-\abs{x}^{2})\) and \(\usf_{0}\left(x\right) = 1-\abs{x}^{2}\).
  No analytic expressions for \(\bar \usf\), \(\bar \psf\) or \(\bar \zsf\) are available.

\end{itemize}
}

\subsubsection{Quasi-uniform meshes}

\EO{We solve the fully discrete scheme on quasi-uniform meshes with mesh sizes \(h_{\T}\) and time steps of size \(\tau=h_{\T}^{\gamma}\), where \(\gamma=\min\{1,s+1/2-\epsilon\}\) and $\epsilon > 0$ is arbitrarily small.

In Figure~\ref{fig:error2D} we present, for (I) and $s = 0.25$ and $s=0.75$, the experimental rates of convergence for the $L^{2}(Q)$-errors of the state and control variable as well as the \(L^{2}((0,T),H^{s}(\Omega))\)-error of the state variable.
Moreover, we also show the $L^{2}(Q)$-errors of the state and control for (II), computed with respect to a very fine solution.
We observe that the experimental rates of convergence for the error approximation of the control variable are in agreement with the error estimate \eqref{eq:final_error_estimate_control} of Theorem \ref{thm:control_error}.
The slightly faster convergence for (II) is explained by the fact that fine solutions are used as reference to compute errors.}


\begin{figure}
  \centering
  \includegraphics{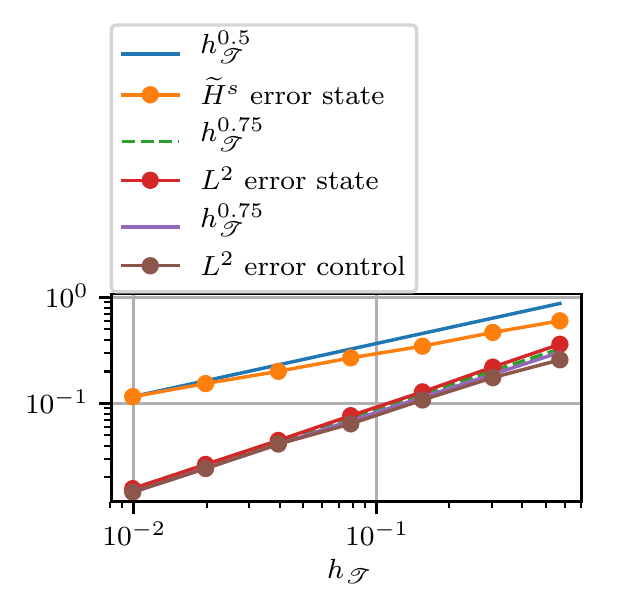}
  \includegraphics{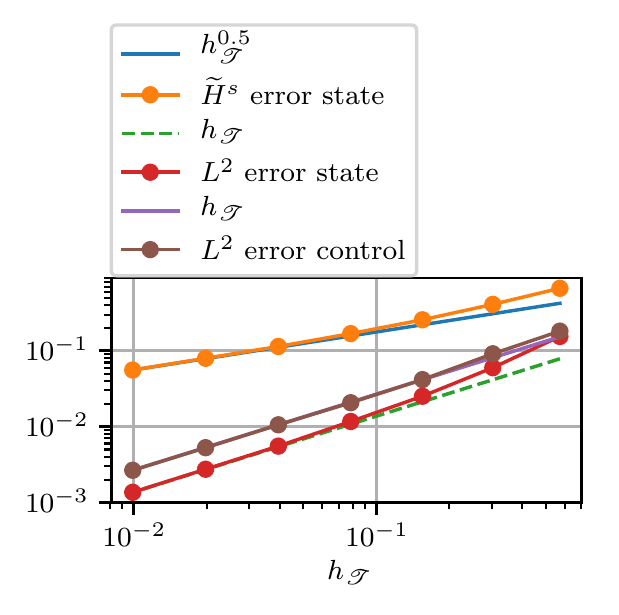}
  \includegraphics{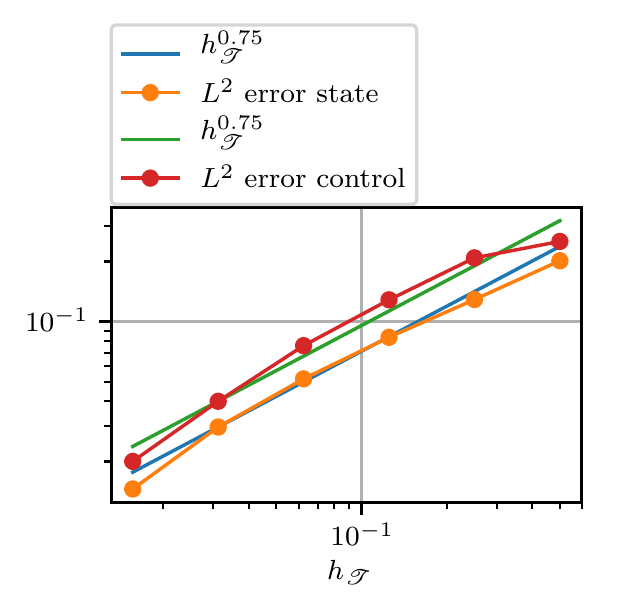}
  \includegraphics{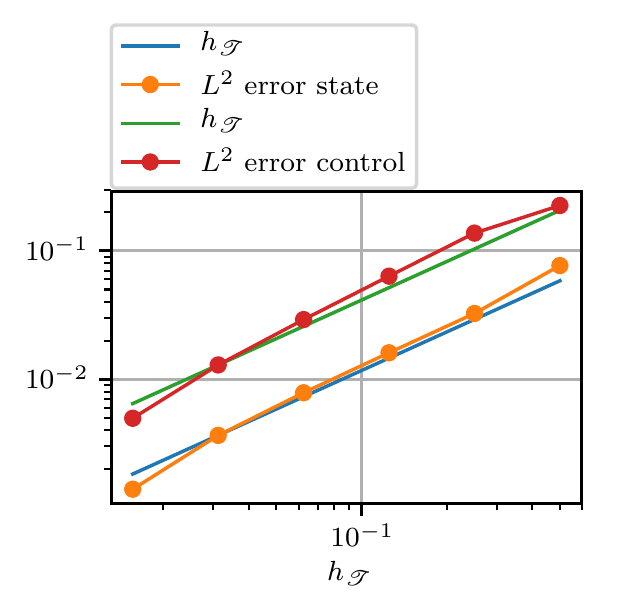}
  \caption{2D example: Experimental rates of convergence for \(\|\bar \usf - \bar U^{\tau}_{\T}\|_{L^{2}((0,T),H^{s}\left(\Omega\right))}\), \(\|\bar \usf - \bar U^{\tau}_{\T}\|_{L^{2}(Q)}\), and \(\| \bar{\zsf} - \bar{Z}^{\tau}_{\T}  \|_{L^2(Q)}\) for \(s=0.25\) (left) and \(s=0.75\) (right) on quasi-uniform meshes for problems (I) (\emph{top}) and (II) (\emph{bottom}).}
  \label{fig:error2D}
\end{figure}

\subsubsection{Graded meshes}

In this section we explore the computational performance of the devised fully discrete scheme on the basis of finite elements spaces over graded meshes on $\Omega$.
We motivate and describe such a graded finite element setting in what follows.
When $s \in (1/2,1)$ and $n=2$, the singular behavior of the solution to the elliptic fractional Poisson problem \eqref{eq:fractional_Poisson} can be compensated by using a priori adapted meshes; see~\cite{MR3620141}.
These graded meshes, which allow for an improvement on the a priori error estimate obtained on quasiuniform meshes, are constructed as follows.
In addition to shape regularity, we assume that the meshes $\T$ have the following property:
Given a mesh parameter $h>0$ and $\kappa \in [1,2]$ every element $T \in \T$ satisfies
\begin{equation*}
h_T \approx C(\sigma) h^{\kappa} \textrm{ if } T \cap \partial \Omega \neq \emptyset,
\quad
h_T \approx C(\sigma) h \mathrm{dist}(T,\partial \Omega)^{(\kappa-1)/\kappa} \textrm{ if } T \cap \partial \Omega = \emptyset,
\end{equation*}
where $C(\sigma)$ depends only on the shape regularity constant $\sigma$ of the mesh $\T$.
$\kappa$ relates the mesh parameter $h$ to the number of degrees of freedom, $N$, as follows:
$
N \approx h^{-2}\textrm{ if } \kappa \in (1,2), \quad
N \approx h^{-2}|\log h|\textrm{ if } \kappa = 2.
$
The optimal choice is $\kappa =2$.

In Figure~\ref{fig:error2Dgraded} we present the experimental rates of convergence for  \(\|\bar \usf - \bar U^{\tau}_{\T}\|_{L^{2}(Q)}\), \(\|\bar \usf - \bar U^{\tau}_{\T}\|_{L^{2}((0,T),H^{s}\left(\Omega\right))}\) and \(\| \bar{\zsf} - \bar{Z}^{\tau}_{\T}  \|_{L^2(Q)}\) obtained by using graded meshes on $\Omega$ with grading parameter \(\kappa=2\) for problem (I).
%
We observe improved rates of convergence for the error approximation of the state variable in both \(L^{2}(Q)\)- and \(L^{2}((0,T),H^{s}\left(\Omega\right))\)-norms.
We note that this setting is not covered by the analysis developed in the previous sections; the main missing ingredient being regularity estimates for the solution of \eqref{eq:fractional_heat} over bounded and Lipschitz domains $\Omega \times (0,T)$.

\begin{figure}
  \centering
  \includegraphics{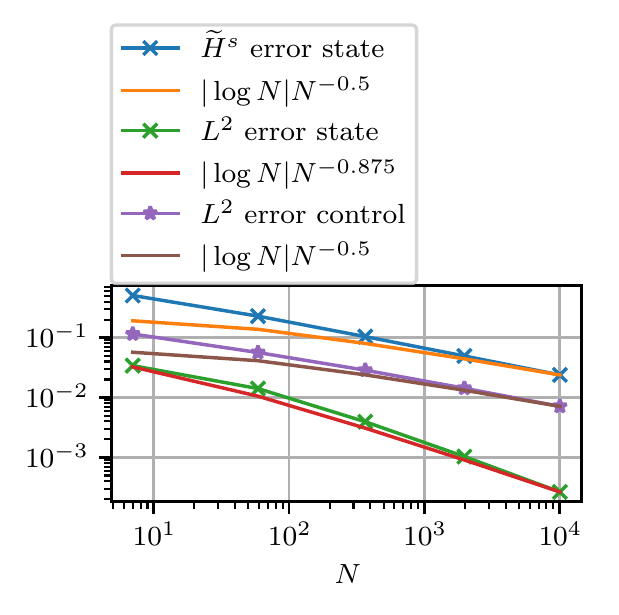}
  \caption{2D example on granded meshes: Experimental rates of convergence for  \(\|\bar \usf - \bar U^{\tau}_{\T}\|_{L^{2}(Q)}\), \(\|\bar \usf - \bar U^{\tau}_{\T}\|_{L^{2}(0,T,H^{s}\left(\Omega\right))}\), and \(\| \bar{\zsf} - \bar{Z}^{\tau}_{\T}  \|_{L^2(Q)}\) for \(s=0.75\) on problem (I).}
  \label{fig:error2Dgraded}
\end{figure}

\section{Conclusions}

We have analyzed a control-constrained linear-quadratic optimal control problem for the fractional heat equation and derived existence and uniqueness results, first order optimality conditions, and regularity estimates for the optimal variables. We have proposed a fully discrete scheme to discretize the state equation equation that relies on an implicit finite difference discretization in time combined with a piecewise linear finite element discretization in space. We have derived stability results and a priori error estimate in $L^2(0,T;L^2(\Omega))$. Furthermore, we have proposed a fully discrete scheme for the optimal control problem that discretizes the control variable with piecewise constant functions, and derived a priori error estimates for it. Finally, we have illustrated the theory with one-- and two--dimensional numerical experiments.

\section{Acknowledgments}
We would like to thank the anonymous referees for several comments and suggestions that led to better results and an improved presentation.
E. Ot\'arola would also like to thank G. Grubb for insightful discussions on regularity properties for fractional heat equations.
E. Ot\'arola was supported by CONICYT through FONDECYT project 11180193.
C. Glusa was supported by Sandia National Laboratories (SNL) and the Laboratory Directed Research and Development program at SNL.
SNL is a multimission laboratory managed and operated by National Technology and Engineering Solutions of Sandia, LLC., a wholly owned subsidiary of Honeywell International, Inc., for the U.S. Department of Energy’s National Nuclear Security Administration contract number DE-NA-0003525.
This paper describes objective technical results and analysis.
Any subjective views or opinions that might be expressed in the paper do not necessarily represent the views of the U.S. Department of Energy or the United States Government.
SAND Number: SAND2020-6344 O

\bibliographystyle{plain}
\bibliography{biblio}

\end{document}